\theoremstyle{plain}
\newtheorem{theorem}{Theorem}
\newtheorem{proposition}{Proposition}
\newtheorem{lemma}{Lemma}
\newtheorem{corollary}{Corollary}
\newtheorem{definition}{Definition}
\theoremstyle{remark}
\newtheorem{remark}{Remark}
\newtheorem{example}{Example}
\newcommand{\parens}[1]{\ensuremath{\left(#1\right)}}
\newcommand{\set}[1]{\ensuremath{\left\{#1\right\}}}
\newcommand{\suchthat}{\,\middle|\,}
\newcommand{\I}[1]{\ensuremath{\mathbb{#1}}}
\newcommand{\Exterior}{\mathchoice{{\textstyle\bigwedge}}%
    {{\bigwedge}}%
    {{\textstyle\wedge}}%
    {{\scriptstyle\wedge}}}
\newcommand{\lie}[1]{\ensuremath{\mathfrak{\MakeTextLowercase{#1}}}}
\DeclareMathOperator{\Spin}{Spin}
\DeclareMathOperator{\spin}{\mathfrak{spin}}
\DeclareMathOperator{\U}{U}
\let\u\undefined \DeclareMathOperator{\u}{\mathfrak{u}}
\DeclareMathOperator{\lieG2}{\lie{g}_2}
\newcommand{\GL}{\mathrm{GL}}
\newcommand{\SO}{\mathrm{SO}}
\newcommand{\SU}{\mathrm{SU}}
\newcommand{\Sp}{\mathrm{Sp}}
\newcommand{\so}{\mathfrak{so}}
\newcommand{\su}{\mathfrak{su}}
\newcommand{\gl}{\mathfrak{gl}}
\renewcommand{\sp}{\mathfrak{sp}}
\newcommand{\Sym}{\mathrm{Sym}}
\newcommand{\cyclic}{\mathfrak{S}}
\newcommand{\R}{\mathrm{I\!R}}
\newcommand{\C}{\mathbb{C}}
\newcommand{\frakg}{\mathfrak{g}}
\newcommand{\dr}{\mathrm{dr}}
\newcommand{\ad}{\mathrm{ad}}
\newcommand{\Ric}{\mathrm{Ric}}
\renewcommand{\d}{\mathrm{d}}
\newcommand{\End}{\mathrm{End}}
\newcommand{\SL}{\mathrm{SL}}
\newcommand{\trace}{\mathrm{tr}}
\newcommand{\scal}{\mathrm{s}}
\newcommand{\rmP}{\mathrm{P}}
\newcommand{\rmS}{\mathrm{S}}
\newcommand{\rmU}{\mathrm{U}}
\newcommand{\scrC}{\mathcal{C}}
\newcommand{\scrW}{\mathcal{W}}
\newcommand{\bbS}{\mathbb{S}}
\def\pd{\partial}
\title{A special class of symmetric Killing 2-tensors}
\author{Konstantin Heil and Tillmann Jentsch}
\begin{document}\sloppy

\maketitle
\begin{abstract}
We study symmetric Killing 2-tensors on Riemannian manifolds and show that several additional conditions can be realised only for Sasakian manifolds and Euclidean spheres. 
In particular we show that (three)-Sasakian manifolds can also be characterized by properties of the symmetric products of their characteristic 1-forms. 
Moreover, we recover a result of S.~Gallot on the characterization of spheres by means of functions satisfying a certain differential equation of order three.
\\

\noindent

\noindent
2010 {\it Mathematics Subject Classification}: Primary 53C05, 53C25, 53C29.

\noindent{\it Keywords}: 
Killing tensors, prolongation of the Killing equation, spaces of constant curvature, Sasakian manifolds, metric cone, Young symmetrizer
\end{abstract}

\section{Introduction}
Symmetric tensors on a Riemannian manifold $(M^n,g)$ are by definition invariant under an arbitrary interchange of indices. On the other hand, (alternating) forms reverse their sign  whenever one interchanges two different indices. More generally,  the symmetry type of a tensor of valence $d$ reflects its behaviour under the action of the symmetric group $\rmS_d$ on its indices. Another famous example are the algebraic identities satisfied by the Riemannian curvature tensor like the first Bianchi identity. 

According to Schur-Weyl duality, symmetry types of tensors are in one-one correspondence with irreducible representations of the special linear group $\SL(n)$ (cf.~\cite[Theorem 6.3]{FH}). The Killing equation  in its general form is then stated as follows. Every function is by definition Killing, i.e. the Killing condition is empty for sections of the trivial vector bundle $M\times\R$. If $V$ is a non-trivial irreducible representation of the (special) linear group and  $\gamma$ is a section of the vector bundle $VM$ associated to the frame bundle of $M$, then $\nabla \gamma$ is a section of $TM^* \otimes VM$. Decomposing the latter into irreducible subbundles  according to the branching rules of Littlewood-Richardson (cf.~\cite[App.~A]{FH}), the Killing equation demands the vanishing of the Cartan component of $\nabla \gamma$.

Because the Killing equation is of finite type, the existence and uniqueness of the prolongation of the Killing equation is a priori ensured from an abstract point of view (cf.~\cite{BCEG}). Thus there exists a vector bundle $\mathrm{Prol}(VM) \to M$ equipped with a linear connection and a linear assignment $\Gamma(VM)\to \Gamma(\mathrm{Prol}(VM))$ such that Killing tensors correspond with parallel sections of $\mathrm{Prol}(VM)$.

\subsection{Killing forms}
For example, the Killing equation for $p$-forms is $\d\omega = (p+1) \nabla\omega$ for all $p\geq 0$, where $\d$ and $\nabla$ denote the exterior differential and the Levi-Civita connection.
In other words, $\omega$ is Killing if and only if $\nabla\omega$ is alternating, again. 
In~\cite{Se} U.~Semmelmann studied more generally conformal Killing forms on Riemannian manifolds and gave partial classifications. 
In particular, he found the complete classification of so called {\em special} Killing forms on compact Riemannian manifolds.
A Killing form $\omega$ is called special if it satisfies $\nabla_X \d \omega = - c\, X^\sharp\wedge \omega$ 
for some constant $c > 0$, where $X^\sharp$ denotes the dual of a vector field $X$ (cf.~\cite[Chapter~3]{Se}). 
To every pair $(\omega,\eta)$ of a $p$-form $\omega$ and a $p+1$-form $\eta$ on $M$ we associate the $p+1$-form $r^p \d r \wedge \omega + \frac{r^{p+1}}{p+1}  \eta$  
on the cone $\hat M := M\times \R_+$  (cf.~\cite[3.2.4]{Se}).
Then it is easy to see that $\omega$ is a special Killing form if and only if  the $p+1$-form $\hat\omega := \frac{1}{p+1} \d ( r^{p+1} \omega)$ associated with the pair $(\omega,\d \omega)$ is Levi-Civita parallel with respect to the cone metric. 
On the other hand, a parallel $p+1$-form on the cone yields a special Killing $p$-form on $M$ by inserting the
radial vector field at $r=1$. Thus there is a 1-1 correspondence between parallel $p+1$-forms on the cone and special Killing $p$-forms on $M$. Using a classical
theorem of S.~Gallot (cf.~\cite[Proposition~3.1]{Ga}) one obtains the classification of special Killing forms on compact Riemannian
manifolds via the holonomy principle (cf.~\cite[Theorem~3.2.6]{Se}). Besides the usually considered examples (euclidean spheres and Sasakian manifolds), this
includes also certain exceptional geometries in dimensions six and seven. Moreover, specifying $p = 0$, i.e. $\omega$ is a function $f$, we recover that a non-vanishing function $f$  satisfying the second order differential equation $\nabla^2 f = n\cdot g\cdot f$ (denoted by $(E_1)$ in~\cite{Ga}) exists only on the unit-sphere.

Every Killing $p$-form $\omega$ also satisfies the second order equation $\nabla_X\d\, \omega = \frac{p+1}{p}R^+(X)\omega$, the prolongation of the Killing equation. Here $R^+(X)$ denotes the natural action of degree one  of the endomorphism valued 1-form $R_{X,\bullet}$ on forms (cf.~\cite[Chapter~4]{Se}). In particular, a form $\omega$ is Killing if and only if
the prolongation $(\omega,\d \omega)$ is parallel with respect to a natural linear connection on the corresponding vector bundle, the Killing
connection. Moreover, on the unit sphere we have $R^+(X)\omega = - p\,X^*\wedge \omega$. In other words, a Killing form is special if and only if the prolonged Killing
equation has the same form as generally for Killing forms on a round sphere.

\subsection{A brief overview on our results for symmetric tensors}
Since the Killing equation for 1-forms (i.e. Killing vector fields) is well understood, we focus on symmetric 2-tensors:

\bigskip
\begin{definition}
A symmetric 2-tensor $\kappa$ is Killing if the completely symmetric
part of $\nabla\kappa$ vanishes, i.e.
\begin{equation}\label{eq:def_Killing}
\forall p\in M,x\in T_pM:\;\nabla_x\kappa(x,x)\;\;=\;\;0.
\end{equation}
\end{definition}

The crucial difference when compared with forms comes from the form of the prolongation and the lift to the cone associated with it. As laid out before, we can see the pair $(\omega,\d\omega)$ associated with a form $\omega$ not only as a set of variables of the prolongation, but also as a form on the  cone. 
Quite differently, for symmetric 2-tensors the variables of the prolongation are triples $(\alpha,\beta,\gamma)\in \bbS_{T_0}V^*\oplus \bbS_{T_1}V^*\oplus\bbS_{T_2}V^*$ of tensors of different symmetry types described by Young tableaus $T_i$ of shapes $(2,i)$ for $i =0,1,2$,
see~\eqref{eq:kappa_prolong_1}-\eqref{eq:kappa_prolong_2} and~\eqref{eq:def_C_kappa}-\eqref{eq:def_S_kappa} below. 
Such a triple can be  naturally seen as the components of a (symmetrized) algebraic curvature tensor on the cone via $S :=  r^2\, \alpha\owedge \d r\bullet \d r + r^3\, \beta\owedge \dr  + r^4\, \gamma$, where $\bullet$ is the usual symmetric product and $\owedge$ is a product which will be introduced in the next section. We will understand under which conditions $S$ is Levi-Civita parallel on the metric cone (see Theorem~\ref{th:main_1}) and relate these conditions to the geometry of $M$ (see Theorem~\ref{th:main_2}). Finally, given a function $f$ on $M$ which satisfies the partial differential equation $(E_2)$ of order three considered in~\cite{Ga} (see~\eqref{eq:Gallots_Gleichung} below), 
we associate with $f$  a Killing tensor which fits into Theorem~\ref{th:main_1}. Thus we recover the main result of~\cite{Ga} on the characterization of the unit sphere by the existence of such functions 
(see Corollary~\ref{co:Gallot}). 

Even though already for valence two these constructions are most efficently formulated  by means of
Young symmetrizers, explicit descriptions avoiding this formalism are possible,
see for example~\cite{HM} (for valence two) and~\cite{Th,W} (for arbitrary valence). For symmetric tensors of arbitrary valence, Y.~Houri and others~\cite{HTY} have given an explicit construction of the Killing prolongation via a recursive construction 
using the technique of Young symmetrizers. Although their arguments can not immediately be generalised to other symmetry types, 
nevertheless one would conjecture that similar explicit constructions are possible for all symmetry types of tensors.

\section{Weyls construction of irreducible representations of the general linear group}
\label{se:introduction_to_Young_symmetrizers}
We recall the notion of Young tableaus, the action of the associated symmetrizers and projectors on tensor spaces 
and their relation to  irreducible representations of the general linear group via the Schur functor. 
The notation is in accordance with~\cite{Sch}, for details see Fulton-Harris~\cite{F,FH}. 


A partition $\lambda_1\geq \cdots \geq \lambda_k >0$ of an integer $d$ can be depicted through a
Young frame, an arrangement of $d$ boxes aligned from the left in $k$-rows of length $\lambda_i$ counted from top to bottom. 
For example the frame corresponding to $(5,3,2)$ is
$$%
\begin{array}{|c|c|c|c|c|}
\cline{1-5}
 &  &  &  &  \\
\cline{1-5} 
 &  &  & \multicolumn{2}{c}{\;\;}\\
\cline{1-3}
 &  &  \multicolumn{3}{c}{\;\;}\\
\cline{1-2}
\end{array}
 $$
Filling the diagram with $d$ different numbers $\{i_1,\ldots,i_d\}$ we
obtain a {\em Young tableau} of {\em shape} $\lambda$ (cf.~\cite{F}). For example,
\begin{equation}\label{eq:normal_standard_Young_tableau}
T= \begin{array}{|c|c|c|c|c|}
\cline{1-5}
1 & 10 & 9 & 2 & 5\\
\cline{1-5}
8 & 7 & 4 & \multicolumn{1}{c}{\;\;} \\
\cline{1-3}
3 & 6 & \multicolumn{2}{c}{\;\;}\\
 \cline{1-2}
\end{array}
\end{equation}
 is a Young tableau of shape $(5,3,2)$.  For simplicity we assume 
in the following that $\{i_1,\ldots,i_d\} = \{1,\ldots,d\}$. When these
numbers are in order, left to right and top to bottom, the tableau is called
{\em normal.} Since there is only one normal diagram of a given shape $\lambda$, we will denote this by
 $T_\lambda$. 

Let $V^n$ be some vector space with dual space $V^*$. 
The tensor product $\bigotimes^d V^*$ can be seen as the space of multilinear forms of degree $d$ of $V$.
Here we have the natural right action of the symmetric group $\rmS_d$ given by
\begin{equation*}
\lambda \cdot \sigma(v_1, \cdots, v_d) := \lambda(v_{\sigma^{-1}(1)}, \cdots,v_{\sigma^{-1}(d)}).
\end{equation*}
Let $\rmS_c$ and $\rmS_r$ denote the
subgroup of $\rmS_d$ preserving columns and rows, respectively, of some fixed Young tableau $T$ of shape $\lambda$. The row symmetrizer 
and the column anti-symmetrizer  are
\begin{align}\label{eq:def_r_lambda}
&r_T \colon \bigotimes^dV^* \to \bigotimes^dV^*,\ \lambda \mapsto 
\sum_{\sigma\in \rmS_r} \lambda\cdot \sigma,\\
\label{eq:def_c_lambda}
&c_T \colon \bigotimes^dV^*\to \bigotimes^dV^*,\ \lambda \mapsto \sum_{\sigma\in \rmS_c} (-1)^{|\sigma|} \lambda\cdot\sigma.
\end{align}
The Young symmetrizer and its adjoint associated to a tableau $T$ are defined by 
\begin{align}
\label{eq:def_S_*_lambda}
&\rmS_T \;\;:=\;\; r_T\circ c_T,\\
\label{eq:def_S_lambda}
&\rmS^\star_T \;\;:=\;\;  c_T\circ r_T.
\end{align}
The endomorphisms $r_T$ and $c_T$ on
$\bigotimes^dV^*$ are called intertwining maps.

The images $\bbS_TV^* := \rmS_T(\bigotimes^dV^*)$ and $\bbS^\star_TV^* :=
\rmS^\star_T(\bigotimes^dV^*)$ both are irreducible representations of the
general linear group $\GL(n)$, dual to the one of highest weight $\lambda$. The intertwining maps yield by construction explicit invariant isomorphisms between $\bbS_TV^*$ and $\bbS^\star_TV^*$. 
The assignment $V \mapsto \bbS_TV^*$ (or $V \mapsto \bbS^\star_TV^*$) is called the Schur functor for covariant tensors associated with $T$.

In particular, by means of Schurs lemma there exists a constant $h$ such that $\rmP_T := \frac{1}{h_\lambda} \rmS_T$ and $\rmP^\star_T := \frac{1}{h_\lambda} \rmS^\star_T$ both are projectors, the Young projectors associated with $T$.
This constant is actually an integer which depends only on 
the underlying Young frame and hence we can write $h = h_\lambda$. It is given as follows:

A {\em hook} of length $d$ is a  Young frame with $d$ boxes but only one row and one column of length larger than
one. The box in the first row on the uttermost left is called its center.  For example, a hook of length $4$ is given by
\begin{equation}\label{eq:hook}
\begin{array}{|c|c|c|}
\cline{1-3}
 &  &  \\
\cline{1-3}
 & \multicolumn{2}{c}{\;\;}\\
 \cline{1-1}
\end{array}
\end{equation}

For every box of $T$ there is a unique maximal hook inscribed into the 
diagram centered at the given box. Its length is called the hook length of the
box. Then  $h_\lambda$ is the product of all hook numbers taken over all boxes of the frame. 

\paragraph{}
Following~\cite[Ch.~15.5]{FH}, there is another characterization of the representation space $\bbS^\star_TV^*$. Let $\mu_1\geq \cdots \geq \mu_\ell$ denote the conjugate
partition, i.e. column lengths. Then $\bbS^\star_TV^*$ is by construction a subspace of $\Lambda^{\mu_1}V^*\otimes\cdots \otimes \Lambda^{\mu_\ell}V^*$. Further, as a consequence of the branching rules due
to Littlewood-Richardson, it is immediately clear that $\bbS^\star_TV^*$ is a subspace of the kernel of the bilinear map
\begin{align}
&\ell_{ij}^\star \colon \Lambda^{\mu_i}V^*\times \Lambda^{\mu_j}V^*\to
\Lambda^{\mu_i+1}V^*\otimes \Lambda^{\mu_j-1}V^*,\quad (\omega_i,\omega_j)
\mapsto \ell_{ij}^\star(\omega_i,\omega_j):\\ 
& \ell_{ij}^\star(\omega_i,\omega_j)(v_1,\ldots,v_{\mu_i+\mu_j})\;\;:=\;\; \sum_{a=1}^{\mu_i+1}(-1)^{a+\mu_i+1}\omega_i(v_1,\cdots,\hat v_a,\ldots,v_{\mu_i+1})\omega_j(v_a,v_{\mu_i+2},\ldots,v_{\mu_i+\mu_j}),
\end{align}
which anti-symmetrizes all the indizes of $\omega_i$ with one further
index of $\omega_j$ for $i<j$. Moreover, it is also easy to see that $\bbS^\star_TV^*$ is in fact equal to the intersection of the
Kernels of all $\ell_{ij}^\star$ taken over all $1\leq i < j \leq \ell$.

For $\gamma\in \bigotimes_{a=1}^\ell \Lambda^{\mu_a}V^*$ let
$\gamma_{ij} \colon \Lambda^{\mu_i}V\times \Lambda^{\mu_j}V \to \bigotimes_{a\neq i,j} \Lambda^{\mu_a}V^*$ be the natural map. Then the previous is equivalent to
\begin{equation}\label{eq:exchange_rule_column}
\gamma_{ij}(v_1,\cdots,v_{\mu_i},v_{\mu_i+1},\cdots,v_{\mu_i+\mu_j}) = \sum_{a=1}^{\mu_i}\gamma(v_1,\cdots,v_{a-1},v_{\mu_i+1},v_{a+1},\ldots,v_{\mu_i},v_a,v_{\mu_i+2},\ldots,v_{\mu_i+\mu_j}),
\end{equation}
i.e. the elements of $\bbS^\star_TV^*$ respect the Plücker relations described in~\cite[15.53]{FH}. In~\cite[Ch.~8]{F} the above equation is seen 
as an ``exchange rule'' of length one between the  $i$-th and $j$-th column of $T$. Exchange rules of greater lengths (like pair symmetry for algebraic curvature tensors) then follow
automatically, cf. also~\cite[Exercise 15.54]{FH}.

\bigskip
\begin{example}
Consider the tableau $T := \scaleto{\begin{array}{|c|c|}
\cline{1-2}
1 & 3 \\
\cline{1-2}
2 & 4 \\
\cline{1-2}
\end{array}}{16pt}$. The tensor products $\Lambda^2V^*\otimes \Lambda^2V^*$ and  $\Lambda^3V^*\otimes V^*$ decompose according to the Littlewood-Richardson rules as follows
\begin{align*}
\Lambda^2V^*\otimes \Lambda^2V^* & = \Lambda^4V^*\oplus \bbS^\star_{\scaleto{\begin{array}{|c|c|}
\cline{1-2}
1 & 3\\
\cline{1-2}
2 &  \multicolumn{1}{c}{} \\
\cline{1-1}
4 &  \multicolumn{1}{c}{}\\
\cline{1-1}
\end{array}}{12pt}} V^*\oplus  \bbS^\star_TV^*,\\
\Lambda^3V^*\otimes V^* &= \Lambda^4V^*\oplus \bbS^\star_{\scaleto{\begin{array}{|c|c|}
\cline{1-2}
1 & 3\\
\cline{1-2}
2 &  \multicolumn{1}{c}{} \\
\cline{1-1}
4 &  \multicolumn{1}{c}{}\\
\cline{1-1}
\end{array}}{12pt}} V^*.
\end{align*}
The first two factors are already skew-symmetric in the first three indices and hence $\ell_{12}^\star$ yields isomorphisms between the first and second summands in each decomposition, respectively. Therefore, it 
necessarily vanishes on $\bbS^\star_TV^*$. From this we see that $\bbS^\star_{\scaleto{\begin{array}{|c|c|}
\cline{1-2}
1 & 3 \\
\cline{1-2}
2 & 4 \\
\cline{1-2}
\end{array}}{12pt}}V^*$ describes the subspace of $\Lambda^2V^*\otimes \Lambda^2V^*$ given by those tensors which satisfy the first Bianchi identity, i.e algebraic prototypes of the
curvature tensor $\tilde R(x_1,x_2,x_3,x_4)$ of some Riemannian manifold
$\tilde M$ at some point $p\in \tilde M$ (cf. also~\cite[p.12]{Sch}).  The
only exchange rule of order two leads to pair symmetry, which is a well
known algebraic consequence of the first Bianchi identity in $\Lambda^2V^*\otimes \Lambda^2V^*$.
\end{example}

Dividing through the action of the determinant (which acts as a scalar
according to Schurs lemma), we obtain an irreducible representation of the special linear
group $\SL(n)$ and, conversely, up to isomorphy  every irreducible representation of $\SL(n)$ is obtained in this way (cf.~\cite[Ch.~15]{FH}). 

The representation space $\bbS_TV^*$ has a description completely analogous to  $\bbS^\star_TV^*$ (although this fact seems to be less popular).
By definition, $\bbS_TV^*$ is a subspace of $\Sym^{\lambda_1}V^*\otimes\cdots \otimes \Sym^{\lambda_k}V^*$ (where as before $\lambda_i$ are the lengths of the rows of $T$).
Then it follows by similar arguments as above that $\bbS_TV^*$ is given by the
intersection of the kernels of the linear maps
\begin{align}
& \ell_{ij} \colon \Sym^{\lambda_i}V^*\otimes \Sym^{\lambda_j}V^*\to \Sym^{\lambda_i+1}V^*\otimes
\Sym^{\lambda_j-1}V^*,\quad (\alpha_i,\alpha_j)\mapsto \ell_{ij}(\alpha_i,\alpha_j):\\
&\ell_{ij}(\alpha_i,\alpha_j)(v_1,\ldots,v_{\lambda_i+\lambda_j})\;\;:=\;\;
\sum_{a=1}^{\lambda_i+1}\alpha_i(v_1,\cdots,\hat v_a,\ldots,v_{\lambda_i+1})\alpha_j(v_a,v_{\lambda_i+2},\ldots,v_{\lambda_i+\lambda_j})
\end{align}
with $i< j$. 
As before, exchange rules between rows of greater length follow automatically.

\bigskip
\begin{example}\label{ex:Young_symmetrizer}
\begin{enumerate}
\item For every covariant 2-tensor $\alpha$ \begin{equation}\label{eq:Young_(2,0)}
\rmS_{\scaleto{\begin{array}{|c|c|}
\cline{1-2}
1 & 2 \\
\cline{1-2}
\end{array}}{6pt}}\alpha(v_1,v_2) \;\;= \;\;
\begin{array}{c}
\alpha(v_1,v_2)+\alpha(v_2,v_1)
\end{array}
\end{equation} 
and $h_{(2)} = 2\cdot 1$.  Hence $\bbS_{\scaleto{\begin{array}{|c|c|}
\cline{1-2}
1 & 2 \\
\cline{1-2}
\end{array}}{6pt}}V^*$ is the space of covariant symmetric 2-tensors and the Young projector $\rmP_{\scaleto{\begin{array}{|c|c|}
\cline{1-2}
1 & 2 \\
\cline{1-2}
\end{array}}{6pt}}$ is the usual projector on symmetric 2-tensors.
\item
 We have \begin{equation}\label{eq:Young_(2,1)}
\rmS_{\scaleto{\begin{array}{|c|c|}
\cline{1-2}
2 & 3 \\
\cline{1-2}
1 & \multicolumn{1}{c}{} \\
 \cline{1-1}
\end{array}}{12pt}}\beta(v_1,v_2,v_3)\;\;= \;\; \left \lbrace
\begin{array}{c}
\;\  \beta(v_1,v_2,v_3) - \beta(v_2,v_1,v_3)\\
+ \beta(v_1,v_3,v_2) - \beta(v_3,v_1,v_2)
\end{array}\right \rbrace.
\end{equation} 
Further, $h_{(2,1)} = 3\cdot 1\cdot 1$. Hence the following is equivalent:
\begin{itemize}
\item
$
\beta \in \bbS_{\scaleto{\begin{array}{|c|c|}
\cline{1-2}
2 & 3 \\
\cline{1-2}
1 & \multicolumn{1}{c}{} \\
 \cline{1-1}
\end{array}}{12pt}} V^*$,
\item
$\rmS_{\scaleto{\begin{array}{|c|c|} 
\cline{1-2}
2 & 3 \\
\cline{1-2}
1 & \multicolumn{1}{c}{} \\
 \cline{1-1}
\end{array}}{12pt}}\beta(v_1,v_2,v_3) \;\;= \;\; 3\, \beta(v_{1},v_{2},v_{3})$,
\item 
$\beta\in V^* \otimes \Sym^2 V^*$ and
such that the completely symmetric part $\beta(v,v,v)$ vanishes. 
\end{itemize}
Such tensors are the algebraic prototype of the
covariant derivative $\nabla_{x_1}\kappa(x_2,x_3)$  of a symmetric Killing 2-tensor at a given point of some Riemannian manifold.
\item  We have $ \rmS_{\scaleto{\begin{array}{|c|c|}
\cline{1-2}
1 & 2 \\
\cline{1-2}
3 & 4 \\
\cline{1-2}
\end{array}}{12pt}}\gamma(v_1,v_2,v_3,v_4) =$ \begin{equation}\label{eq:Young_(2,2)}
 \left \lbrace \begin{array}{c}\;\ \gamma(v_1,v_2,v_3,v_4) - \gamma(v_3,v_2,v_1,v_4) - \gamma(v_1,v_4,v_3,v_2)  + \gamma(v_3,v_4,v_1,v_2)\\
 + \gamma(v_2,v_1,v_3,v_4)  - \gamma(v_3,v_1,v_2,v_4)  - \gamma(v_2,v_4,v_3,v_1)  + \gamma(v_3,v_4,v_2,v_1)\\
+ \gamma(v_1,v_2,v_4,v_3)  - \gamma(v_4,v_2,v_1,v_3)  - \gamma(v_1,v_3,v_4,v_2)  + \gamma(v_4,v_3,v_1,v_2)\\
 + \gamma(v_2,v_1,v_4,v_3)  - \gamma(v_4,v_1,v_2,v_3)  - \gamma(v_2,v_3,v_4,v_1)  + \gamma(v_4,v_3,v_2,v_1)\\
\end{array}
\right \rbrace.
\end{equation}
Further, $h_{(2,2)} = 3\cdot 2\cdot 2\cdot 1$. Therefore the following is equivalent:
\begin{itemize}
\item  $\gamma \in \bbS_{\scaleto{\begin{array}{|c|c|}
\cline{1-2}
1 & 2 \\
\cline{1-2}
3 & 4 \\
\cline{1-2}
\end{array}}{12pt}}V^*$,
\item   
$
S_{\scaleto{\begin{array}{|c|c|}
\cline{1-2}
1 & 2 \\
\cline{1-2}
3 & 4 \\
\cline{1-2}
\end{array}}{12pt}}\gamma(v_1,v_2,v_3,v_4) \;\;= \;\;  12\, \gamma(v_1,v_2,v_3,v_4)
$
\item $\gamma\in \Sym^2(V^*)\otimes\Sym^2(V^*)$ and $\gamma(u,u,u,v) = 0$ for all
  $u,v\in V$.
\end{itemize}
Then $\gamma$ is called a symmetrized algebraic curvature tensor (see~\cite[Example~4.10]{Sch}). Via the isomorphism $\bbS_{\scaleto{\begin{array}{|c|c|}
\cline{1-2}
1 & 2 \\
\cline{1-2}
3 & 4 \\
\cline{1-2}
\end{array}}{12pt}}V^* \cong \bbS^\star _{\scaleto{\begin{array}{|c|c|}
\cline{1-2}
1 & 3 \\
\cline{1-2}
2 & 4 \\
\cline{1-2}
\end{array}}{12pt}}V^*$ this is the algebraic prototype of the symmetrization 
\begin{equation}\label{eq:symmetrized_curvature_tensor}
\forall x_1,x_2,x_3,x_4\in T_p\tilde M:\; \gamma(x_1,x_2,x_3,x_4) := \cyclic_{12} \tilde R(x_1,x_3,x_2,x_4)
\end{equation} of the  curvature tensor $\tilde R$ of some Riemannian manifold. 
\end{enumerate}
\end{example}

\subsection{A generalisation of the wedge  product}
\label{se:product}
Let $U$, $V$ and $W$ be irreducible representations of the general linear
group $\GL(n)$. If $W$ occurs with multiplicity one in the decomposition of $U\otimes V$, then there exists an invariant bilinear map
$\owedge \colon U\times V \to W$ which is unique up to a factor by Schurs Lemma. In the cases relevant for our article,
the underlying Young frame of all three involved partitions has at most two
rows. Let $\lambda = (\lambda_1,\lambda_2)$, $\mu=(\mu_1,\mu_2)$  and $\nu =
(\nu_1,\nu_2)$ be partitions of length at most two. Further, suppose that
$\lambda_1 \leq \nu_1\leq \lambda_1 + \mu_1$, $\lambda_2\leq \nu_2$ and that the length of these partitions satisfies $|\nu|= |\lambda| + |\mu|$. 
Let $T_1$, $T_2$ and $T_3$ be tableaus of shape $\lambda$, $\mu$ and $\nu$, respectively.
Then $\bbS_{T_3}V^*$ occurs with multiplicity one in the decomposition of the tensor product 
$\bbS_{T_1}\,V^* \otimes \bbS_{T_2}\,V^*$ according to the
Littlewood-Richardson rules.  Hence, because of Schurs lemma there is up to
a factor a unique invariant bilinear map 
\begin{equation}
\owedge \colon \bbS_{T_1}\,V^* \times \bbS_{T_2}\,V^*\to \bbS_{T_3}V^*.
\end{equation}
In order to reduce the occurence of annoying factors in 
our formulas, we make the following specific choice of $\owedge$: suppose for simplicity that all three
tableaus are normal. 
Then $\lambda \owedge \mu(v_1,\cdots, v_{\nu_1 + \nu_2})$ is obtained by applying the Young symmetrizer
$\rmS_{T_3}$ to the product $\lambda(v_1,\cdots,v_{\lambda_1},v_{\nu_1+1},\cdots,v_{\nu_1 + \lambda_2})
\mu(v_{\lambda_1+1},\cdots, v_{\nu_1},v_{\nu_1 + \lambda_2 + 1},\cdots,v_{\nu_1 + \nu_2})$ 
and dividing by the product $h_\lambda h_\mu$ of all hook numbers of the two
frames of shape $\lambda$ and $\mu$, this way generalising the standard
definition of the wedge product $\omega\wedge \eta := \frac{n!}{p!q!} \mathrm{Alt}(\omega\otimes
\eta)$ of $p$- and $q$-forms $\omega$ and $\eta$ from
differential geometry. Since our plain notation $\owedge$ is still ambiguous, we describe explicitly the products used in the sequel:

\bigskip
\begin{definition}\label{de:product}
\begin{enumerate}
\item We keep to the standard notation
\begin{equation}
\lambda_1\bullet \lambda_2(v_1,v_2) \;\;:=\;\; \lambda_1(v_1)\lambda_2(v_2) + \lambda_1(v_2)\lambda_2(v_1)
\end{equation}
for the symmetrized tensor product of 1-forms $\lambda_1$ and $\lambda_2$ (cf.~\cite{HMS}).
\item Given a 1-form $\lambda$ and a symmetric 2-tensor $\alpha$, we set 
\begin{equation}\label{eq:product_of_a_one_form_and_a_symmetric_2-tensor}
\lambda\owedge \alpha(v_1,v_2,v_3) \;\;:=\;\; -\alpha\owedge \lambda (v_1,v_2,v_3)\;\;:=\;\;\frac{1}{2}\, \rmS_{\scaleto{\begin{array}{|c|c|}
\cline{1-2}
2 & 3 \\
\cline{1-2}
1 & \multicolumn{1}{c}{} \\
 \cline{1-1}
\end{array}}{12pt}}\, \lambda(v_2)\alpha(v_1,v_3).
\end{equation}
\item We define the Cartan product
\begin{equation}\label{eq:def_curvature_tensor_associated_with_two_2-forms}
\omega_1\owedge \omega_2(v_1,v_2,v_3,v_4)\;\;:=\;\; \frac{1}{4}\, \rmS_{\scaleto{\begin{array}{|c|c|}
\cline{1-2}
1 & 2 \\
\cline{1-2}
3 & 4\\
\cline{1-2} 
\end{array}}{12pt}}\, \omega_1(v_1,v_3)\omega_2(v_2,v_4)
\end{equation}
of alternating 2-forms $\omega_1$ and $\omega_2$.
\item Given a 1-form $\lambda$ and $\beta\in \bbS_{\scaleto{\begin{array}{|c|c|}
\cline{1-2}
2 & 3 \\
\cline{1-2}
1 & \multicolumn{1}{c}{} \\
 \cline{1-1}
\end{array}}{12pt}}V^*
$, we define 
\begin{equation}\label{eq:noch_ein_produkt}
\lambda  \owedge \beta(v_1,v_2,v_3,v_4)\;\;:=\;\;- \beta\owedge\lambda(v_1,v_2,v_3,v_4)\;\;:=\;\;-\frac{1}{3}\,\rmS_{\scaleto{\begin{array}{|c|c|}
\cline{1-2}
1 & 2 \\
\cline{1-2}
3 & 4\\
\cline{1-2} 
\end{array}}{12pt}}\, \lambda(v_1)\beta(v_2,v_3,v_4).
\end{equation}
\item 
We define a variant of the Kulkarni-Nomizu product
\begin{equation}\label{eq:def_Kulkarni_Nomizu_product}
\alpha_1\owedge \alpha_2(v_1,v_2,v_3,v_4)\;\;:=\;\;\frac{1}{4}\,\rmS_{\scaleto{\begin{array}{|c|c|}
\cline{1-2}
1 & 2 \\
\cline{1-2}
3 & 4\\
\cline{1-2} 
\end{array}}{12pt}}\, \alpha_1(v_1,v_2)\alpha_2(v_3,v_4)
\end{equation}
for symmetric 2-tensors $\alpha_1$ and $\alpha_2$.
\end{enumerate}
\end{definition}


\section{The main results}
Although we are mainly interested in the Riemannian case, we consider more
generally a pseudo Riemannian manifold $(M,g)$ of dimension $n$ with tangent
bundle $TM$, Levi-Civita connection $\nabla$ and curvature tensor
$R$. Further, we will use the standard notation $\langle x,y\rangle := g(x,y)$ for $x,y\in T_pM$.

The first and second (standard) variable of the prolongation of the Killing equation are defined by
\begin{align}\label{eq:kappa_prolong_1}
\kappa^1(x_1,x_2,x_3) \;:=\; & \rmP_{\scaleto{
\begin{array}{|c|c|}
\cline{1-2} 
2 & 3 \\
\cline{1-2}
1 & \multicolumn{1}{c}{\;\;\;}\\ 
\cline{1-1}
\end{array}}{12pt}} \nabla_{x_1}\kappa(x_2,x_3),\\
\label{eq:kappa_prolong_2}
\kappa^2(x_1,x_2,x_3,x_4) \;:=\; & \rmP_{\scaleto{\begin{array}{|c|c|}\cline{1-2}
1 & 2 \\
\cline{1-2}
3  & 4\\
\cline{1-2} 
\end{array}}{12pt}}\nabla^2_{x_1,x_2}\kappa(x_3,x_4).
\end{align}
In Section~\ref{se:prolongation} we will show that the triple $(\kappa,\kappa^1,\kappa^2)$ is in fact closed under componentwise covariant derivative, 
which yields the prolongation of the Killing equation for symmetric 2-tensors.

\bigskip
\begin{definition}
The (Riemannian) cone $\hat M$ is the trivial fibre bundle  $\tau \colon M\times \R_+ \to M$ whose total space is equipped with the metric tensor
\begin{equation}\label{eq.cone}
  \hat g := r^2\, g  +  \d r^2.
\end{equation}
\end{definition}
Then we see $M$ as a (pseudo) Riemannian submanifold of $\hat M$ via $\iota \colon M\to
\hat M,p\mapsto (p,1)$. Every symmetrized algebraic curvature tensor $S$ on $\hat M$ 
defines a symmetric 2-tensor $\kappa := \frac{1}{2} \pd_r\lrcorner\pd_r \lrcorner  S$ on $M$, i.e
\begin{equation}\label{eq:kanonische_konstruktion_von_killingtensoren}
\kappa(x,y) := \frac{1}{2} S(\pd_r|_p,\pd_r|_p,\d_p \iota\, x,\d_p \iota\, y)
\end{equation}
for all $p\in M$ and $x,y\in T_pM$. For a construction in the inverse
direction, let a symmetric 2-tensor $\kappa$ on $M$ be given:

\bigskip
\begin{definition}
The symmetrized algebraic curvature tensors defined by
\begin{align}
\label{eq:def_C_kappa}
C^\kappa &\;\;:=\;\; \kappa^2 +  \kappa \owedge g,\\
\label{eq:def_S_kappa}
S^\kappa&\;\;:=\;\; r^2\, \kappa\owedge \d r\bullet \d r + r^3\,
\kappa^1\owedge \dr  + r^4\, C^\kappa
\end{align}
will be called the associated symmetrized algebraic curvature tensors on $M$ and $\hat M$, respectively. 
\end{definition} 

We remark without proof that $S^\kappa$ has the alternative description $\rmP_{\scaleto{\begin{array}{|c|c|}
\cline{1-2}
1 & 2 \\
\cline{1-2}
3 & 4\\
\cline{1-2} 
\end{array}}{12pt}}\hat \nabla^2_{v_1,v_2}r^4\tau^*\kappa(v_3,v_4)$
.  


\bigskip
\begin{theorem}\label{th:main_1}
Let $M$ be a pseudo Riemannian manifold of dimension at least two with Levi-Civita
connection $\nabla$ and curvature tensor $R$. The definitions given in~\eqref{eq:kanonische_konstruktion_von_killingtensoren},~\eqref{eq:def_C_kappa} and~\eqref{eq:def_S_kappa} yield a 1-1 correspondence between 
\begin{enumerate}
\item symmetric Killing 2-tensors $\kappa$ on $M$ which satisfy the following nullity conditions
\begin{align}\label{eq:curv_cond_1}
R_{x,y}\kappa\;\;=\;\;& - x\wedge y \cdot \kappa\;,\\
\label{eq:curv_cond_2}
R_{x,y} \nabla \kappa\;\;=\;\;& - x\wedge y\cdot \nabla \kappa
\end{align}
for all $p\in M$ and $x,y\in T_pM$;
\item pairs $(\kappa,C)$ of symmetric 2-tensors $\kappa$ and symmetrized algebraic curvature tensors $C$ on $M$ satisfying
\begin{align}\label{eq:alg_curv_tensor_1}
C(x,\,\cdot\,,\,\cdot\,,\,\cdot\,)\;\;=\;\;& \nabla_x\nabla\kappa + 2\,\kappa\owedge x^\sharp,\\
\label{eq:alg_curv_tensor_2}
\nabla_x C \;\;=\;\;& - \nabla\kappa\owedge x^\sharp
\end{align}
for all $p\in M$ and $x \in T_pM$;
\item parallel symmetrized algebraic curvature tensors on the Riemannian cone $\hat M$.
\end{enumerate}
\end{theorem}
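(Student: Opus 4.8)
The plan is to prove the two equivalences $(1)\Leftrightarrow(2)$ and $(2)\Leftrightarrow(3)$ and to verify that the constructions \eqref{eq:kanonische_konstruktion_von_killingtensoren} and \eqref{eq:def_C_kappa}--\eqref{eq:def_S_kappa} are mutually inverse, which together yield the asserted $1$-$1$ correspondence. Throughout I use the prolongation of the Killing equation from Section~\ref{se:prolongation}: for a Killing tensor one has $\nabla\kappa=\kappa^1$, while $\nabla_x\kappa^1$ and $\nabla_x\kappa^2$ are given by explicit formulas in $\kappa$, $\kappa^1$, $\kappa^2$ and the curvature $R$. The entire difficulty lies in tracking which curvature terms survive and in rewriting the resulting contractions through the products of Definition~\ref{de:product}.

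For $(1)\Rightarrow(2)$ I take $\kappa$ Killing subject to \eqref{eq:curv_cond_1}--\eqref{eq:curv_cond_2}, set $C:=C^\kappa=\kappa^2+\kappa\owedge g$, and substitute the prolongation formulas into \eqref{eq:alg_curv_tensor_1}--\eqref{eq:alg_curv_tensor_2}. The leftover curvature terms are exactly $R_{x,\,\cdot}\,\kappa$ and $R_{x,\,\cdot}\,\nabla\kappa$; the nullity conditions replace them by contractions of $x\wedge(\,\cdot\,)$ with $\kappa$ and with $\nabla\kappa$, and the key step is to recognise these as $2\,\kappa\owedge x^\sharp$ and $-\nabla\kappa\owedge x^\sharp$ by means of \eqref{eq:product_of_a_one_form_and_a_symmetric_2-tensor}, \eqref{eq:noch_ein_produkt} and \eqref{eq:def_Kulkarni_Nomizu_product}. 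For the converse I observe that the product $\nabla\kappa\owedge x^\sharp$ in \eqref{eq:alg_curv_tensor_2}, with $\owedge$ taken from \eqref{eq:noch_ein_produkt}, only makes sense for $\nabla\kappa\in\bbS_{(2,1)}V^*$, i.e. it requires $\kappa$ to be Killing, so that $\nabla\kappa=\kappa^1$; inserting the prolongation expression for $\nabla_x\kappa^1$ into \eqref{eq:alg_curv_tensor_1} writes $C$ through $\kappa^2$, $\kappa\owedge g$ and a residual curvature term, and a single Ricci-identity manipulation combining \eqref{eq:alg_curv_tensor_1} with \eqref{eq:alg_curv_tensor_2} forces \eqref{eq:curv_cond_1}--\eqref{eq:curv_cond_2}, whence $C=C^\kappa$. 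Here the symmetry type of $C$ is used in the form $C(x,y,y,y)=0$, so that $C(x,\,\cdot\,,\,\cdot\,,\,\cdot\,)\in\bbS_{(2,1)}V^*$.

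The equivalence $(2)\Leftrightarrow(3)$ is a direct computation on the cone. I compute $\hat\nabla S^\kappa$ from \eqref{eq:def_S_kappa} using the standard cone identities $\hat\nabla_{\pd_r}\pd_r=0$, $\hat\nabla_{\pd_r}X=\hat\nabla_X\pd_r=\tfrac1r X$, $\hat\nabla_X Y=\nabla_X Y-r\,g(X,Y)\,\pd_r$ and the ensuing formula for $\hat\nabla_X\,\d r$. The powers $r^2,r^3,r^4$ are chosen precisely so that the radial derivative $\hat\nabla_{\pd_r}S^\kappa$ vanishes identically, each factor $\d r$ and each horizontal slot contributing a compensating factor $\tfrac1r$; thus $S^\kappa$ has automatically the correct homogeneity. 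The horizontal derivative $\hat\nabla_X S^\kappa$ then decomposes, according to the number of radial legs, into three pieces whose vanishing is equivalent respectively to the Killing equation for $\kappa$, to \eqref{eq:alg_curv_tensor_1}, and to \eqref{eq:alg_curv_tensor_2}. Conversely, starting from a parallel $S$ one recovers $\kappa$ by \eqref{eq:kanonische_konstruktion_von_killingtensoren}; a short calculation gives $\tfrac12\,\pd_r\lrcorner\,\pd_r\lrcorner\,S^\kappa=\kappa$, so the two maps are inverse, and the integrability identity $\hat R\cdot S=0$ together with the cone curvature relation $\hat R_{X,Y}=R_{X,Y}-X\wedge Y$ at $r=1$ returns \eqref{eq:curv_cond_1}--\eqref{eq:curv_cond_2}.

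The main obstacle is the algebraic bookkeeping with Young symmetrizers: one must check that the curvature contractions coming from the prolongation and from the cone curvature correction coincide, up to the prescribed numerical constants, with the products $\kappa\owedge g$, $\kappa\owedge x^\sharp$ and $\nabla\kappa\owedge x^\sharp$ of Definition~\ref{de:product}. The normalising factors $\tfrac12,\tfrac13,\tfrac14$ in \eqref{eq:product_of_a_one_form_and_a_symmetric_2-tensor}--\eqref{eq:def_Kulkarni_Nomizu_product} are fixed exactly so that these identities hold without stray constants. The identity $S^\kappa=\rmP_{(2,2)}\hat\nabla^2_{v_1,v_2}r^4\tau^*\kappa(v_3,v_4)$ noted after \eqref{eq:def_S_kappa} is a convenient shortcut, as it realises $S^\kappa$ as the $(2,2)$-projection of a cone Hessian and reduces much of the cone computation to differentiating a projected second covariant derivative; matching the correction $-X\wedge Y$ in the cone curvature with the terms $-x\wedge y\cdot(\,\cdot\,)$ of the nullity conditions is what links the geometric statement (3) to the algebraic one (1).
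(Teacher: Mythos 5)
The decisive gap is in your direction $(2)\Rightarrow(1)$: you dispose of the Killing property by asserting that the product $\nabla\kappa\owedge x^\sharp$ in~\eqref{eq:alg_curv_tensor_2} ``only makes sense'' for $\nabla\kappa\in\bbS_{(2,1)}V^*$, so that $\kappa$ is forced to be Killing. This is not an argument: the defining formula~\eqref{eq:noch_ein_produkt} is a Young symmetrizer applied to $\lambda(v_1)\beta(v_2,v_3,v_4)$ and is literally meaningful for every $\beta\in V^*\otimes\Sym^2V^*$, so equation~\eqref{eq:alg_curv_tensor_2} is a genuine condition on an arbitrary symmetric 2-tensor and no Killing condition is encoded in the notation. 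That the two equations of (b) alone force $\kappa$ to be Killing is a theorem with real content, and it is dimension-dependent: Remark~\ref{re:main_3} exhibits on $\R$ a pair $(\kappa,C)$ with $C=0$ and $\kappa$ a quadratic form with affine linear coefficients which satisfies~\eqref{eq:alg_curv_tensor_1}--\eqref{eq:alg_curv_tensor_2} although $\kappa$ is not Killing; hence no formal manipulation insensitive to $\dim M\geq 2$ can prove it. The paper devotes Proposition~\ref{p:get_rid}~(a) and a chain of lemmas to exactly this point, all of which your proposal omits: parallelism of the cyclic sum $\cyclic_{123}\nabla_{y_1}\kappa(y_2,y_3)$ (Lemma~\ref{le:Bianchis_cyclic_sum_is_parallel}), the trace identities for $\Ric^C$ and $\scal^C$, irreducibility of $M$ whenever $\kappa$ is not a constant multiple of $g$ (Lemma~\ref{le:M_is_irreducible}) and the resulting contracted identity $\d\,\trace\,\kappa=2\,\delta\kappa$, the fact that $\grad\,\trace\,\kappa$ lies in the $1$-nullity (Lemma~\ref{le:curvature_constancy}), and finally a holonomy dichotomy via Berger's classification (transitive holonomy kills a parallel odd-degree symmetric 3-tensor; non-transitive holonomy forces local symmetry and then constant curvature, a contradiction), capped by a density argument. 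Without this, or a substitute for it, your correspondence from (2) to (1) is unproven, and the same deficiency propagates into your $(2)\Leftrightarrow(3)$, since identifying the first component of $\hat\nabla_xS^\kappa$ with the Killing equation presupposes what you have not shown, namely that (b) puts $\nabla\kappa$ into $\bbS_{(2,1)}$.

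A secondary weakness: even granting that $\kappa$ is Killing, your claim that ``a single Ricci-identity manipulation'' yields the nullity conditions~\eqref{eq:curv_cond_1}--\eqref{eq:curv_cond_2} is unsubstantiated. Substituting the prolongation into~\eqref{eq:alg_curv_tensor_1}--\eqref{eq:alg_curv_tensor_2} only shows that the Young-projected curvature expressions of~\eqref{eq:def_F21}--\eqref{eq:def_F22} agree with those of the model curvature $R_1$, i.e. $F^1(\kappa,R)=F^1(\kappa,R_1)$ and $F^2(\kappa,R)=F^2(\kappa,R_1)$; since $R\mapsto F^1(\kappa,R)$ is far from injective, one cannot unproject these identities pointwise to the full conditions $R_{x,y}\cdot\kappa=-x\wedge y\cdot\kappa$ and $R_{x,y}\cdot\nabla\kappa=-x\wedge y\cdot\nabla\kappa$. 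The paper instead goes through the cone: (b) yields via Proposition~\ref{p:prolong} that $S^\kappa$ is $\hat\nabla$-parallel, whence $\hat R\cdot S^\kappa=0$, and horizontality of $\hat R$ together with~\eqref{eq:hat_R_1} annihilates each component of the triple~\eqref{eq:zerlegung_von_kappa_dach}, which is precisely~\eqref{eq:curv_cond_1}--\eqref{eq:curv_cond_2} (Proposition~\ref{p:main}); parallelism is the integrated statement that recovers the unprojected curvature action. Your remaining sketch of $(2)\Leftrightarrow(3)$ --- radial parallelism from the homogeneity $r^2,r^3,r^4$ and the decomposition of $\hat\nabla_xS^\kappa$ into three components --- does match Lemmas~\ref{le:parallel_in_the_radial_direction} and~\ref{le:parallel_in_the_horizontal_direction}, and note that the paper avoids the Young-symmetrizer bookkeeping you flag as the main obstacle by the trick of Lemma~\ref{le:curvature_terms_for_S1}: extend an arbitrary 1-jet to a Killing tensor on the model sphere, so that~\eqref{eq:F21_speziell}--\eqref{eq:F22_speziell} hold for all symmetric 2-tensors by tensoriality. (Minor: your sign $\hat R_{x,y}=R_{x,y}-x\wedge y$ disagrees with~\eqref{eq:hat_R_1}.)
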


We would like to remark that there are always trivial solutions to~(a),~(b) and~(c) of the previous theorem coming from the metric tensor. 
For example a constant multiple $\kappa := c\, g$ of the metric tensor is in accordance with~\eqref{eq:curv_cond_1},~\eqref{eq:curv_cond_2}. Further, it is easy to see that the requirement that the dimension of $M$ 
has to be strictly larger than one is necessary, see Remark~\ref{re:main_3}.

\bigskip
\begin{example}\label{ex:constant_curvature}
If $M$ is simply connected and of constant (non-vanishing) sectional curvature, then we can realise $M$
as a standard model in the pseudo Euclidean space $(V^{n+1},\langle\ ,\ \rangle)$. Further,
possibly after reversing the sign of the metric of $M$ and scaling by a positive
constant,  we can assume that $M$ is the generalised unit sphere $\{v\in V|\langle v,v\rangle = 1\}$. Then the cone
over $M$ is $V\setminus\{0\}$ and the nullity
conditions~\eqref{eq:curv_cond_1} and~\eqref{eq:curv_cond_2} become
tautological. Thus algebraic curvature tensors on $V$ correspond with elements of $\bbS_{\scaleto{\begin{array}{|c|c|}
\cline{1-2}
1 & 2 \\
\cline{1-2}
3 & 4 \\
\cline{1-2}
\end{array}}{12pt}}V^*$ via~\eqref{eq:symmetrized_curvature_tensor}. We obtain
from $(a)\Leftrightarrow (c)$ that symmetric Killing 2-tensors on $M$  are in
1-1 correspondence with algebraic curvature tensors on $V$ (the flat case is handeled similarly).
Thus we have recovered the main result  of~\cite{McMS} in the
special case of 2-tensors.
\end{example}

\bigskip
\begin{remark}\label{re:main_2}
If a pair $(\kappa,C)$ satisfies Equations~\eqref{eq:alg_curv_tensor_1}-\eqref{eq:alg_curv_tensor_2}, 
then $C = C^\kappa$ and, vice versa, in dimension larger than one also $\kappa$ is determined by $C$, see~\eqref{eq:def_kappaC}-\eqref{eq:def_modified_Ricci_trace_und_tilde_scalC}.
\end{remark}

Further, every Riemannian manifold $(M,g)$ can also be seen as a pseudo Riemannian
manifold with negative definite metric tensor and vice versa
by considering $(M,-g)$. Hence the following remark addresses in fact compact
Riemannian manifolds: 

\bigskip
\begin{remark}\label{re:main_1}
On a compact  manifold with negative definite metric tensor there are
no solutions to the previous theorem besides the trivial ones.
\end{remark}

Here the compactness is essential, as becomes clear from considering the
hyperbolic space.

The proofs of Theorem~\ref{th:main_1} and the following two remarks are implicitly contained in 
Propositions~\ref{p:prolong},~\ref{p:main} and~\ref{p:get_rid} below.

\subsection{Characterization of Sasakian manifolds and spheres}
Next, we would like to discuss the relevance of Theorem~\ref{th:main_1} in the Riemannian case. When does a Riemannian
manifold admit tensors fitting (a),~(b) or~(c) other than the trivial solutions?

Recall that there are several ways to define a Sasakian manifold
$M$. One possibility is via the existence of a Kähler 
structure $I$ on $\hat M$ (cf.~\cite[Proposition~1.1.2]{BG}). The
corresponding parallel 2-from $\omega(u,v) := \langle I\, u,v\rangle$ is
called the Kähler form. Then the 1-form $\eta := \iota^*(\pd_r\lrcorner
\omega)$ on $M$ (defined by $\eta(x) := \omega(\pd_r|_p,\d_p\iota x)$ for all $p\in M$ and $x\in T_pM$) is called the characteristic form of the Sasakian manifold $M$. 

More specifically, one considers 3-Sasakian manifolds. Here the cone carries a Hyperkähler structure $\{I_1,I_2,I_3\}$. 
The corresponding Kähler forms $\{\omega_1,\omega_2,\omega_3\}$ induce 1-forms 
$\eta_i := \iota^*(\pd_r\lrcorner \omega_i)$ on $M$  as before called the characteristic forms 
of the 3-Sasakian manifold $M$ (cf.~\cite[Proposition~1.2.2]{BG}).

Examples of Sasakian and 3-Sasakian manifolds are round spheres in odd dimension and dimension 3 {\rm mod} 4, respectively.

In Section~\ref{se:sasaki} we will use S.~Gallots before mentioned theorem in
order to classify complete Riemannian manifolds whose 
cone carries a parallel algebraic curvature tensor by means of the holonomy principle. We obtain our second main result:

\bigskip
\begin{theorem}\label{th:main_2}
The classification of complete and simply connected Riemannian manifolds $(M,g)$ with respect to Theorem~\ref{th:main_1} is as follows.
\begin{itemize}
\item On a round sphere the space of symmetric Killing
  2-tensors is isomorphic to the linear space of algebraic curvature tensors
  in dimension $n+1$, see Example~\ref{ex:constant_curvature}.
\item  Suppose that $(M,g)$ is a 3-Sasakian manifold which is not of constant sectional curvature. 
Let $\{\eta_1,\eta_2,\eta_3\}$ denote its characteristic forms related to Kähler forms $\{\omega_1,\omega_2,\omega_3\}$ 
on the cone. Here the symmetrized tensor products $\eta_i\bullet\eta_j$ are symmetric Killing 2-tensors which correspond to 
the parallel symmetrized algebraic curvature tensors $\omega_i\owedge\omega_j$ on the cone. 
The pair $(\kappa,C^\kappa)$ with $\kappa := \eta_i\bullet\eta_j$ and $C^\kappa := \frac{1}{4} \d\eta_i\owedge\d\eta_j$ is a solution to~\eqref{eq:alg_curv_tensor_1}-\eqref{eq:alg_curv_tensor_2}. Together with the metric tensor we obtain a 7-dimensional space of symmetric 2-tensors matching for example~\eqref{eq:curv_cond_1},\eqref{eq:curv_cond_2} of Theorem~\ref{th:main_1}~a).
\item Suppose that $(M,g)$ is a Sasakian manifold  which is neither 3-Sasakian
  nor of constant sectional curvature one. Let $\eta$ denote its
  characteristic 1-form. Similar as in the 3-Sasakian case, the set of Killing
  tensors fitting into Theorem~\ref{th:main_1} is the 2-dimensional space spanned by the square $\eta \otimes \eta $ of the characteristic form and the metric tensor. 
\item Otherwise, there are only the obvious solutions to (a),~(b) or~(c)
  coming from the metric tensor.
\end{itemize}
\end{theorem}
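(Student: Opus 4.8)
The plan is to push everything to the cone and use the equivalence $(a)\Leftrightarrow(c)$ of Theorem~\ref{th:main_1}: a solution on $M$ is exactly a parallel symmetrized algebraic curvature tensor $S$ on $\hat M$. Since $M$ is simply connected, so is $\hat M=M\times\R_+$, and the holonomy principle turns parallel sections of the bundle of symmetrized algebraic curvature tensors into the subspace $\mathcal{R}^H$ of elements of $\mathcal{R}:=\bbS_{\scaleto{\begin{array}{|c|c|}\cline{1-2}1&2\\\cline{1-2}3&4\\\cline{1-2}\end{array}}{10pt}}(T_{\hat p}\hat M)^*$ fixed by the holonomy representation $H:=\mathrm{Hol}_{\hat p}(\hat M)\subset\SO(T_{\hat p}\hat M)$ at a base point. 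Thus the whole classification reduces to computing $\dim\mathcal{R}^H$ for each admissible $H$ and then reading off the geometry of $M$ from the value of $H$.

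First I would invoke S.~Gallot's theorem (cf.~\cite[Proposition~3.1]{Ga}): for complete $M$ of dimension $\ge2$ the cone $\hat M$ is either flat---which happens precisely when $M$ is the round unit sphere---or irreducible (this is what lets us sidestep the incompleteness of $\hat M$ at the apex). In the flat case $H$ is trivial, every element of $\mathcal{R}$ is parallel, and via Example~\ref{ex:constant_curvature} one recovers the full space of algebraic curvature tensors on $T_{\hat p}\hat M\cong V^{n+1}$; this is the first bullet. In the irreducible case I would run through Berger's list. The symmetric case and the quaternion-Kähler group $\Sp(m)\Sp(1)$ are both eliminated at once by the observation that a non-flat Einstein cone must be Ricci-flat, while an irreducible symmetric space and a quaternion-Kähler manifold are Einstein with nonzero scalar curvature. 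For the remaining groups I would use the $\SO$-decomposition $\mathcal{R}=\R\,R_0\oplus\mathcal{R}_{\mathrm{Ric}_0}\oplus\mathcal{W}$: the middle summand contributes no invariant, since an irreducible $H$ fixes no traceless symmetric $2$-tensor by Schur's lemma, so only $R_0=\hat g\owedge\hat g$ (the trivial solution $\kappa=c\,g$) and the $H$-invariant Weyl tensors can survive.

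It remains to count invariant Weyl tensors group by group and to match the geometry. For $H=\SO(n+1)$, $G_2$ and $\Spin(7)$ there are none, so these manifolds---generic, nearly Kähler in dimension six, and nearly parallel $G_2$ in dimension seven---land in the last bullet with only the metric solution. For $H=\U(m)$ (general Sasakian $M$) and $\SU(m)$ with $m\ge3$ (Sasaki--Einstein $M$) the Kähler form $\omega$ supplies exactly one further invariant $\omega\owedge\omega$, giving $\dim\mathcal{R}^H=2$; since $\hat M$ Kähler means $M$ Sasakian (cf.~\cite{BG}), and since $\tfrac12\pd_r\lrcorner\pd_r\lrcorner$ sends $\hat g\owedge\hat g$ and $\omega\owedge\omega$ to $g$ and $\eta\otimes\eta$, one obtains the Sasakian bullet. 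For $H=\Sp(m)$ with $m\ge2$ the three parallel Kähler forms $\omega_1,\omega_2,\omega_3$ produce the six tensors $\omega_i\owedge\omega_j$ $(i\le j)$, which together with $\hat g\owedge\hat g$ are linearly independent and span a $7$-dimensional space; here $\hat M$ is hyperkähler iff $M$ is $3$-Sasakian, and contraction sends $\omega_i\owedge\omega_j$ to $\eta_i\bullet\eta_j$, giving the $3$-Sasakian bullet. The low-dimensional coincidence $\SU(2)=\Sp(1)$ (cone dimension four, $M$ of dimension three) is the round $S^3$ and is absorbed into the sphere case, which is why constant sectional curvature is excluded in the Sasakian and $3$-Sasakian statements.

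The step I expect to be the main obstacle is making the invariant count rigorous in exactly the places where $\Sym^2\Lambda^2(T_{\hat p}\hat M)^*$ carries one more invariant than $\mathcal{R}$ does. For each $H$ the Killing form of $\mathfrak{h}\subset\Lambda^2$ is an invariant of $\Sym^2\Lambda^2$, and for $G_2\subset\SO(7)$, $\Spin(7)\subset\SO(8)$ and the coincidence $\SU(4)\cong\Spin(6)$ (where $\Sym^2\Lambda^{2,0}$ already contains a trivial summand) this produces an \emph{apparent} extra invariant curvature tensor. The crux is to check that the first Bianchi map $\Sym^2\Lambda^2\to\Lambda^4$ sends this candidate to the defining invariant $4$-form---$\ast\varphi$, the Cayley form, or the real part of the holomorphic volume form, respectively---which is nonzero and hence forces the candidate out of $\mathcal{R}$, so that $\dim\mathcal{R}^H$ stays at $1$ (resp.\ $2$). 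Once these exceptional cases are pinned down, the rest is routine: the explicit verification that $(\kappa,C^\kappa)=(\eta_i\bullet\eta_j,\tfrac14\,\d\eta_i\owedge\d\eta_j)$ solves~\eqref{eq:alg_curv_tensor_1}--\eqref{eq:alg_curv_tensor_2}, together with the contraction identities relating $\omega_i\owedge\omega_j$ to $\eta_i\bullet\eta_j$, is a direct computation with the products of Definition~\ref{de:product}.
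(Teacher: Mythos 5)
Your proposal is correct, and its overall architecture coincides with the paper's proof in Section~\ref{se:sasaki}: reduce via Theorem~\ref{th:main_1}~$(a)\Leftrightarrow(c)$ and the holonomy principle to computing the trivial isotypic component of the space of symmetrized algebraic curvature tensors under the cone holonomy, invoke Gallot's dichotomy (flat cone exactly for the unit sphere, otherwise irreducible), exclude symmetric and quaternion-K\"ahler holonomy because a non-flat cone cannot be Einstein, kill the traceless-Ricci summand by Schur's lemma, and absorb the coincidence $\SU(2)\simeq\Sp(1)$ into the sphere case via the three-dimensional remarks. Where you genuinely diverge is in the multiplicity counts, which are the actual substance of the paper's argument: the paper obtains them from branching rules --- the King--Qubanchi result (Lemma~\ref{lem:sasaki:g2_trivial}) that an irreducible $\so(7)$-module with a $\lieG2$-fixed vector is a Cartan power of the spin representation, a triality argument (Lemma~\ref{lem:sasaki:spin7_trivial}) reducing $\spin_7\subset\so(8)$ to the $\so(7)\supset\lieG2$ case, and the Howe--Tan--Willenbring stable branching rules for the classical pairs (Lemmas~\ref{lem:sasaki:um} and~\ref{lem:sasaki:spm}) --- whereas you count invariants directly in $\Sym^2\Lambda^2=\scrC\oplus\Lambda^4$, using the Killing form of $\frakh\subset\Lambda^2$ and the invariant $4$-forms. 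Your route is more elementary and self-contained, and it can be streamlined: since the Bianchi projection $\Sym^2\Lambda^2\to\Lambda^4$ is $H$-equivariant and surjective, complete reducibility yields $\dim\scrC^H=\dim(\Sym^2\Lambda^2)^H-\dim(\Lambda^4)^H$ outright, so your ``crux'' --- verifying that the candidate invariant maps to a nonzero multiple of $\ast\varphi$, the Cayley form, or $\mathrm{Re}\,\Omega$ --- is automatic; for $\lieG2$ and $\spin_7$ one has $\Lambda^2=\frakh\oplus\R^7$ with both summands irreducible, inequivalent and orthogonal, whence $2-1=1$. The flip side is that the identical counting burden falls on the classical cases, which you pass over with ``exactly one further invariant'' and ``linearly independent'': for example for $\Sp(m)$, $m\geq 2$, one must decompose $\Lambda^2\R^{4m}$ under $\Sp(m)$ (with $E=\C^{2m}$ the standard module: a trivial $\R^3$ spanned by the $\omega_i$, three copies of $\Lambda^2_0E$, and one copy of $\Sym^2E$) and verify $\dim(\Sym^2\Lambda^2)^{\Sp(m)}=13$ and $\dim(\Lambda^4)^{\Sp(m)}=6$, hence $7$; similarly $3-1=2$ for $\U(m)$ and $\SU(m)$ with $m\neq 4$, and $5-3=2$ for the $\SU(4)\simeq\Spin(6)$ coincidence you correctly flag (where the extra pair of invariants in $\Sym^2\Lambda^{2,0}$ coming from the Pl\"ucker summand $\Lambda^{4,0}$ is exactly cancelled by the extra invariant $4$-forms $\mathrm{Re}\,\Omega$, $\mathrm{Im}\,\Omega$). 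These verifications are precisely what the paper's appeal to the stable branching rules of Howe--Tan--Willenbring encapsulates; once they are written out, your argument is complete and, in the exceptional cases $\lieG2$ and $\spin_7$, arguably more transparent than the citation-based route of the paper.
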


We observe that all Killing tensors mentioned in the above theorem are  linear combinations of the metric tensor and symmetric products of Killing vector fields. 
Further, we recall that spheres and Sasakian manifolds are irreducible.

\bigskip
\begin{corollary}\label{co:main_1}
An even (odd) dimensional simply connected complete Riemannian manifold $M$ of
dimension at least two admits a non-trivial Killing tensor matching~\eqref{eq:curv_cond_1},\eqref{eq:curv_cond_2} of Theorem~\ref{th:main_1}~a) if and only if $M$ is the unit sphere (a Sasakian manifold).
\end{corollary}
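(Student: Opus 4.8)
This is essentially a dimension-parity repackaging of Theorem \ref{th:main_2}, so the plan is to read off the conclusion from the case analysis already established there, adding only the observation that parity of $\dim M$ selects which geometry can occur. First I would note that by Theorem \ref{th:main_1}, admitting a symmetric Killing 2-tensor satisfying the nullity conditions \eqref{eq:curv_cond_1}--\eqref{eq:curv_cond_2} is equivalent to the cone $\hat M$ carrying a parallel symmetrized algebraic curvature tensor; the trivial solution $\kappa = c\,g$ always exists, so ``non-trivial'' means the space of such tensors has dimension strictly greater than one. Theorem \ref{th:main_2} then enumerates exactly the complete simply connected Riemannian manifolds for which this happens: the round sphere, 3-Sasakian manifolds, and Sasakian manifolds (with constant-curvature cases absorbed into the sphere case).

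The core of the argument is the parity dichotomy. I would argue that a Sasakian manifold is necessarily odd-dimensional (its cone is Kähler, hence even-dimensional, forcing $\dim M = \dim \hat M - 1$ to be odd), and consequently 3-Sasakian manifolds—being in particular Sasakian—are also odd-dimensional (in fact of dimension $3 \bmod 4$). The round sphere $S^n$, by contrast, carries the full family of Killing tensors from algebraic curvature tensors in dimension $n+1$ in \emph{every} dimension $n \geq 2$. Therefore, among the geometries listed in Theorem \ref{th:main_2}, the sphere is the only one that can occur in even dimension, while in odd dimension the sphere and the Sasakian manifolds (3-Sasakian being a special case of Sasakian) are precisely the options.

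With this in hand the two implications follow directly. In the even-dimensional case, a non-trivial solution forces $M$ into the list of Theorem \ref{th:main_2}; since the only even-dimensional entry is the round sphere, $M$ must be the unit sphere, and conversely the sphere always admits such tensors. In the odd-dimensional case, the non-trivial entries of Theorem \ref{th:main_2} are exactly the sphere and the Sasakian manifolds, and since the round odd sphere is itself Sasakian, the existence of a non-trivial solution is equivalent to $M$ being a Sasakian manifold. The one point requiring care is normalisation: the statement fixes constant curvature \emph{one} (equivalently the unit sphere) in the Sasakian clause of Theorem \ref{th:main_2}, so I would confirm that after the rescaling used throughout (cf. Example \ref{ex:constant_curvature}) the round sphere appearing here is indeed the unit sphere, so that no constant-curvature case is double-counted or omitted.

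The main obstacle is not computational but bookkeeping: ensuring the four bullet points of Theorem \ref{th:main_2} partition cleanly under the even/odd split with the constant-curvature normalisations handled consistently, and verifying that ``non-trivial'' (i.e.\ beyond the metric tensor) matches precisely the boundary between the fourth bullet (only obvious solutions) and the first three. Establishing the evenness of Kähler cones—hence oddness of Sasakian $M$—is the single geometric input that makes the parity statement work.
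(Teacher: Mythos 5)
Your proposal is correct and follows essentially the same route as the paper, which derives Corollary~\ref{co:main_1} directly from the classification in Theorem~\ref{th:main_2} together with the standard parity facts (Sasakian manifolds are odd-dimensional since their cones are K\"ahler, odd-dimensional unit spheres are Sasakian, and 3-Sasakian manifolds are Sasakian). Your attention to the normalisation issue --- that the nullity conditions~\eqref{eq:curv_cond_1},\eqref{eq:curv_cond_2} are not scale-invariant, so the sphere occurring is indeed the \emph{unit} sphere --- is exactly the right bookkeeping point.
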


Due to the fact that a Sasakian vector field has constant length, 
the square of the characteristic form of a Sasakian manifold has
constant trace. The same is true for the symmetric products of the characteristic forms of a
3-Sasakian manifold. Conversely, the Killing tensor $\kappa$ associated with a
parallel curvature tensor $S$ of $\R^{n+1}$ on an $n$-dimensional round sphere has $\d\, \trace
\kappa \neq 0$ unless the algebraic Ricci-tensor of $S$ is a multiple of
the Euclidean scalar product (i.e. unless $S$ has the algebraic
properties of the curvature tensor of an Einstein manifold). Therefore:

\bigskip
\begin{corollary}\label{co:main_2}
A simply connected complete Riemannian manifold $M$ carries a symmetric 2-tensor $\kappa$ matching~\eqref{eq:curv_cond_1},\eqref{eq:curv_cond_2} of Theorem~\ref{th:main_1}~a)
and such that $\d\, \trace \, \kappa \neq 0$ if and only if $M$ is a
round sphere.
\end{corollary}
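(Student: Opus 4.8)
The plan is to read off the corollary from the classification in Theorem~\ref{th:main_2}, supplemented by two trace computations: one showing that the characteristic tensors appearing in the Sasakian and $3$-Sasakian cases all have constant trace, and one expressing $\trace\kappa$ on the sphere through the algebraic Ricci tensor of the associated curvature tensor.

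For the direction ``$\Rightarrow$'' I would argue by contraposition, ruling out every case of Theorem~\ref{th:main_2} except the round sphere. If $M$ is neither a sphere nor $(3)$-Sasakian, the only tensors matching Theorem~\ref{th:main_1}~a) are the multiples $\kappa = c\,g$, with $\trace\kappa = c\,n$ constant. If $M$ is Sasakian (and not a sphere), every such $\kappa$ lies in the span of $g$ and $\eta\otimes\eta$; since the characteristic vector field $\eta^\sharp$ is a unit field, $\trace(\eta\otimes\eta) = \langle\eta^\sharp,\eta^\sharp\rangle = 1$, so $\trace\kappa$ is again constant. If $M$ is $3$-Sasakian (and not a sphere), every such $\kappa$ lies in the span of $g$ and the products $\eta_i\bullet\eta_j$; the characteristic vector fields being orthonormal gives $\trace(\eta_i\bullet\eta_j) = 2\langle\eta_i^\sharp,\eta_j^\sharp\rangle = 2\delta_{ij}$, so once more $\trace\kappa$ is constant. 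In all three cases $\d\trace\kappa = 0$, whence a tensor with $\d\trace\kappa\neq 0$ forces $M$ to be a round sphere.

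For the direction ``$\Leftarrow$'' I would construct an explicit witness on the sphere. Writing $M = \{v\in V\mid\langle v,v\rangle = 1\}$ with $V = \R^{n+1}$ as in Example~\ref{ex:constant_curvature}, the nullity conditions~\eqref{eq:curv_cond_1}--\eqref{eq:curv_cond_2} are tautological, so by $(a)\Leftrightarrow(c)$ any algebraic curvature tensor $S$ on $V$ produces through~\eqref{eq:kanonische_konstruktion_von_killingtensoren} an admissible $\kappa$. At $p\in M$ the radial field equals the position vector $p$, hence $\kappa(x,y) = \tfrac12 S(p,p,x,y)$; tracing over an orthonormal basis of $T_pM = p^\perp$ and using $S(p,p,p,\,\cdot\,) = 0$ (the defining identity of a symmetrized curvature tensor) one gets $2\,\trace\kappa(p) = \Ric_S(p,p)$, the restriction to the sphere of the quadratic form given by the algebraic Ricci tensor of $S$. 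Since $\dim V = n+1\geq 3$, there exist curvature tensors whose Ricci is not a multiple of $\langle\,\cdot\,,\,\cdot\,\rangle$ --- for instance a Kulkarni--Nomizu-type product $h\owedge g$ with $h$ not proportional to $g$ --- and for such $S$ the associated $\kappa$ has non-constant trace, i.e. $\d\trace\kappa\neq 0$.

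The only step needing a small argument is the equivalence ``$\trace\kappa$ constant $\iff\Ric_S\propto\langle\,\cdot\,,\,\cdot\,\rangle$'': a quadratic form that is constant on the unit sphere is, by homogeneity, a fixed multiple of $\langle\,\cdot\,,\,\cdot\,\rangle$ on all of $V$. I do not expect a genuine obstacle here, since the substance is already carried by Theorem~\ref{th:main_2} and by the trace remarks preceding the statement; the remaining work is just organising the case distinction and verifying the two elementary trace formulas.
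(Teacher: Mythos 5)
Your proposal is correct and follows essentially the same route as the paper: the text immediately preceding Corollary~\ref{co:main_2} argues exactly as you do, namely that in the Sasakian and 3-Sasakian cases of Theorem~\ref{th:main_2} the tensors $\eta\otimes\eta$ and $\eta_i\bullet\eta_j$ have constant trace (unit length resp.\ orthonormality of the characteristic vector fields), while on the sphere the Killing tensor $\kappa(x,y)=\tfrac12 S(p,p,x,y)$ associated with a parallel $S$ on $\R^{n+1}$ has $\d\,\trace\,\kappa\neq 0$ unless the algebraic Ricci tensor of $S$ is a multiple of the Euclidean scalar product. Your added details --- the identity $2\,\trace\,\kappa(p)=\Ric_S(p,p)$ via $S(p,p,p,\cdot)=0$, the homogeneity argument, and the explicit non-Einstein witness $h\owedge g$ --- merely make explicit what the paper leaves implicit, so there is no gap.
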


Recall that one of the original goals of the paper~\cite{Ga} was to investigate the following differential
equation $(E_2)$ of order three on functions on Riemannian manifolds,
\begin{equation}\label{eq:Gallots_Gleichung}
\left . \begin{array}{c} \nabla^3_{y_1,y_2,y_3}f  +
    2\,\d f(y_1) \langle y_2,y_3\rangle  +
    \cyclic_{23}\langle y_1,y_2 \rangle\,
    \d f(y_3) \end{array} \right .\;\;=\;\;0.
\end{equation}

Let $\kappa$ be the symmetric 2-tensor 
\begin{equation}\label{eq:Gallots_kappa}
\kappa^f(x,y) := f\,\langle x ,y \rangle + \frac{1}{4} \nabla^2_{x,y} f.
\end{equation}
We will show in Section~\ref{se:Gallot} that the pair $(\kappa,C^\kappa)$ satisfies the the conditions of Theorem~\ref{th:main_1}~(b). Hence we can see the following result in the
context of symmetric Killing 2-tensors:

\bigskip
\begin{corollary}\label{co:Gallot}{\cite[Corollary~3.3]{Ga}}
Suppose that $M$ is a complete Riemannian manifold. 
If there exists a non-constant solution to~\eqref{eq:Gallots_Gleichung}, 
then the universal covering of $M$ is the Euclidean sphere of radius one.
\end{corollary}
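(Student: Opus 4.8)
The plan is to run $f$ through the correspondence of Theorem~\ref{th:main_1} and then invoke the classification via $\d\,\trace$. First I would set $\kappa := \kappa^f$ as in~\eqref{eq:Gallots_kappa} and differentiate, using~\eqref{eq:Gallots_Gleichung} to eliminate the third derivatives of $f$. This collapses $\nabla\kappa$ to
\begin{equation*}
\nabla\kappa(z;x,y) \;=\; \tfrac12\,\d f(z)\,\langle x,y\rangle \;-\; \tfrac14\bigl(\langle z,x\rangle\,\d f(y) + \langle z,y\rangle\,\d f(x)\bigr) \;=\; -\tfrac12\,\d f\owedge g,
\end{equation*}
the last equality using the product~\eqref{eq:product_of_a_one_form_and_a_symmetric_2-tensor}. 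In particular its completely symmetric part vanishes identically, so $\kappa^f$ is Killing and $\kappa^1 = \nabla\kappa$. The decisive structural observation is that the defining relation $\nabla^2 f = 4(\kappa - f\,g)$ lets one re-express every further derivative of $f$ through $\kappa$, $f$ and $g$; hence the prolongation closes algebraically and no uncontrolled higher derivatives survive.

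Next I would verify condition~(b) of Theorem~\ref{th:main_1} for the pair $(\kappa, C^\kappa)$, i.e.\ the identities~\eqref{eq:alg_curv_tensor_1}-\eqref{eq:alg_curv_tensor_2}. Differentiating the displayed form of $\nabla\kappa$ and inserting $\nabla^2 f = 4(\kappa - fg)$ turns~\eqref{eq:alg_curv_tensor_1} into a purely algebraic comparison of $\nabla_x\nabla\kappa$ with $2\,\kappa\owedge x^\sharp$ in terms of $\kappa$, $f$, $g$ and the products of Definition~\ref{de:product}; differentiating~\eqref{eq:Gallots_Gleichung} once more and reducing likewise turns~\eqref{eq:alg_curv_tensor_2} into a comparison with $-\nabla\kappa\owedge x^\sharp$. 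By Remark~\ref{re:main_2} the algebraic curvature tensor is forced to equal $C^\kappa$, so these two identities are all that must be checked. The one genuinely delicate point is that $\nabla^2$ is not symmetric in its differentiation slots: its skew part is governed by the Ricci identity, and applied to $\kappa$ it produces precisely a constant-curvature term of the form $w\wedge z\cdot\kappa$. This is where the geometry enters and where the nullity conditions~\eqref{eq:curv_cond_1}-\eqref{eq:curv_cond_2} of part~(a) emerge; the Young projector onto the $(2,2)$-type defining $\kappa^2$ is exactly what sorts these skew contributions into the algebraic-curvature-tensor symmetry. I expect this curvature bookkeeping to be the main obstacle.

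Once~(b) holds, Theorem~\ref{th:main_1} yields that $\kappa^f$ satisfies~\eqref{eq:curv_cond_1}-\eqref{eq:curv_cond_2}, that is part~(a). To exclude the trivial solution I would trace~\eqref{eq:Gallots_Gleichung} over its last two arguments, obtaining $\d(\trace\nabla^2 f) = -(2n+2)\,\d f$ and hence
\begin{equation*}
\d\,\trace\kappa^f \;=\; n\,\d f + \tfrac14\,\d(\trace\nabla^2 f) \;=\; \tfrac{n-1}{2}\,\d f,
\end{equation*}
which is non-zero wherever $f$ is non-constant (here $n\geq 2$). Passing to the universal covering $\tilde M$, which is complete and simply connected and carries the lifted non-constant solution $\tilde f$ of~\eqref{eq:Gallots_Gleichung}, Corollary~\ref{co:main_2} then forces $\tilde M$ to be a round sphere. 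Finally, the normalisation in~\eqref{eq:curv_cond_1} (the coefficient $-1$ in front of $x\wedge y\cdot\kappa$, matching the tautological unit-sphere case of Example~\ref{ex:constant_curvature}) pins the sectional curvature to $1$, so $\tilde M$ is the Euclidean sphere of radius one, as claimed.
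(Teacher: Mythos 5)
Your proposal is correct, but it takes a genuinely different route from the paper. The paper never verifies condition~(b) intrinsically: following Gallot, it sets $F := r^2 f$ on the cone, uses that~\eqref{eq:Gallots_Gleichung} is equivalent to $\hat\nabla q = 0$ for $q := \frac{1}{2}\hat\nabla^2 F$, and then feeds the parallel symmetrized algebraic curvature tensor $S := q\owedge\hat g$ into part~(c) of Theorem~\ref{th:main_1}, so the parallelism comes for free from Gallot's original computation. You instead work entirely on $M$, establishing part~(b) from the (correctly computed) identity $\nabla\kappa^f = -\frac{1}{2}\,\d f\owedge g$ together with the reduction $\nabla^2 f = 4(\kappa^f - f\,g)$; this is self-contained and makes the prolongation data explicit, at the cost of the (routine but unwritten) verification of~\eqref{eq:alg_curv_tensor_1}--\eqref{eq:alg_curv_tensor_2}, which the paper itself only calls ``straightforward''. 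One point where you overcomplicate: your anticipated ``curvature bookkeeping'' is unnecessary. Since the Hessian of $f$ is symmetric, differentiating your formula gives $\nabla_x\nabla\kappa^f = -\frac{1}{2}\,(\nabla_x\d f)\owedge g = -2\,\bigl(x\lrcorner\kappa^f - f\,x^\sharp\bigr)\owedge g$, so both identities of~(b) reduce to purely algebraic comparisons in $\kappa^f$, $f$, $g$ and the products of Definition~\ref{de:product}; no Ricci identity is needed, and the nullity conditions~\eqref{eq:curv_cond_1}--\eqref{eq:curv_cond_2} need not ``emerge'' during the check, since Theorem~\ref{th:main_1} hands you~(a) once~(b) holds. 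On the endgame you agree with the paper (lift to the universal cover, apply Corollary~\ref{co:main_2}, pin the radius via the normalisation of the nullity condition, equivalently Lemma~\ref{le:curvature_constancy}), and in fact your trace computation is \emph{more} accurate than the paper's: with the convention $\nabla^*\nabla f = -\trace\nabla^2 f$ used throughout the paper, tracing~\eqref{eq:Gallots_Gleichung} over the last two slots gives $\d(\trace\nabla^2 f) = -(2n+2)\,\d f$ and hence $\d\,\trace\,\kappa^f = \frac{n-1}{2}\,\d f$ as you state; the paper's coefficient $\frac{3n+1}{2}$ appears to stem from a sign slip in $\d\,\nabla^*\nabla f$. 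Either constant suffices for the conclusion when $n\geq 2$, but your version makes visible the implicit hypothesis $\dim M\geq 2$, which is genuinely needed: on a circle of circumference $\pi$ the function $\sin(2t)$ is a non-constant solution of~\eqref{eq:Gallots_Gleichung}, while the universal cover is $\R$.
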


We conjecture that the differential equation $(E_p)$ considered in Chapter~4 of the same paper~\cite{Ga} is related to symmetric Killing tensors of valence $p$ for all $p\geq 1$ in a similar way.
\section{The associated symmetrized algebraic curvature tensor on the cone}
\label{se:prolongation_on_the_cone}
Let $M$ be a (pseudo) Riemannian manifold and $\hat M$ its cone. The canonical projection $\tau:\hat M\to M$
is obtained by forgetting the second component of a point $(p,t)\in \hat M$. 
Further, the natural inclusion 
\[
\iota \colon M \to \hat M,p\mapsto (p,1)
\] 
exhibits $M$ as a Riemannian submanifold of $\hat M$. 
The tangent bundle of $\hat M$ splits as $\tau^*T \hat M = T M \oplus \R$. 
Vectors tangent to $M$ are called horizontal vectors. 
In the following $x,y,\ldots$ denote horizontal vectors and $u,v,\ldots$
arbitrary tangent vectors of $\hat M$.  The Levi-Civita connection is given by (cf.~\cite[Ch.~1]{Ga})
\begin{align}\label{eq:Levi-Civita_connection_of_the_cone_1}
& \hat\nabla_{\partial_r} \partial_r\;\;=\;\;0\;,\\
\label{eq:Levi-Civita_connection_of_the_cone_3}
& \hat\nabla_x y\;\;=\;\;\nabla_x y - r\, \langle x,y\rangle \partial_r,\\
\label{eq:Levi-Civita_connection_of_the_cone_2}
&\hat\nabla_{\partial_r} x\;\;=\;\;\hat\nabla_x {\partial_r}\;\;=\;\;\frac{1}{r} x.
\end{align}
The last two equations are precisely the equations of Gauß and Weingarten 
describing the Levi-Civita connection on the euclidean sphere of radius $r$ where $\pd_r$ becomes the outward pointing normal vector field.
Equation~\eqref{eq:Levi-Civita_connection_of_the_cone_1} implies that the vector field $\frac{x}{r}$ is parallel along the curve $r\mapsto (p,r)$  for each $x\in T_pM$ , i.e.
\begin{equation}\label{eq:parallele_vektorfelder_laengs_radialer_kurven}
\hat\nabla_{\pd_r}\frac{x}{r} = 0.
\end{equation}

Further, it follows that
\begin{align}\label{eq:hat_nabla_diff_r}
& \hat\nabla_{\partial_r}\d r\;\;=\;\;0\;,\\
\label{eq:hat_nabla_x_dr}
& \hat\nabla_x\d r\;\;=\;\;r\, x^\sharp
\end{align}
(where $x^\sharp$ denotes the dual 1-form $\langle x,\,\cdot\,\rangle $\,. Let $\gamma$ be  an arbitrary covariant $k$-tensor of $M$. 

\begin{itemize}
\item Given $x\in T_pM$ we define a covariant tensor $x\lrcorner \gamma$ of valence $k-1$ by setting
\begin{equation}\label{eq:x_einsetzen}
x\lrcorner \gamma (y_1,\cdots,y_{k-1}) := \gamma (x,y_1,\cdots,y_{k-1}).
\end{equation}
\item Given some endomorphism $A$ of $T_pM$ we define $A\cdot \gamma$ by 
\begin{equation}\label{eq:derivation}
A\cdot \gamma (x_1,\cdots,x_k) := - \sum_{i=1}^k \gamma(x_1,\cdots,A x_i,\cdots, x_k).
\end{equation}
\end{itemize}

If $\gamma$ is a covariant $p$-tensor on $M$, the pullback $\tau^*\gamma$ will 
also be denoted by $\gamma$. As a consequence of the previous we have:

\bigskip
\begin{lemma}
Let $\gamma$ and $\kappa$ be an arbitrary and a symmetric covariant $k$-tensor, respectively, on $M$.
\begin{align}\label{eq:hat_nabla_kappa_r}
& \hat\nabla_{\partial_r}\gamma\;\;=\;\;- \frac{k}{r} \gamma\;,\\
\label{eq:hat_nabla_gamma_x}
& \hat\nabla_{x}\gamma\;\;=\;\;\nabla_x \gamma - \frac{1}{r} \d r\otimes x \cdot \gamma,\\
\label{eq:hat_nabla_kappa_x}
& \hat\nabla_{x}\kappa\;\;=\;\;\nabla_x \kappa - \frac{1}{r}\d r \bullet x \lrcorner \kappa.
\end{align}
\end{lemma}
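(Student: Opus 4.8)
The plan is to prove the three formulas by directly computing covariant derivatives of pullback tensors on the cone, using the Levi-Civita connection formulas~\eqref{eq:Levi-Civita_connection_of_the_cone_1}--\eqref{eq:Levi-Civita_connection_of_the_cone_2} together with the operations $A\cdot\gamma$ and $x\lrcorner\kappa$ just introduced. I would treat $\hat\nabla\gamma$ as a derivation acting on a $k$-tensor, so for any tangent vectors $u_1,\dots,u_k$ of $\hat M$ one has $(\hat\nabla_w\gamma)(u_1,\dots,u_k) = w(\gamma(u_1,\dots,u_k)) - \sum_i \gamma(u_1,\dots,\hat\nabla_w u_i,\dots,u_k)$. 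The whole proof reduces to feeding the two nontrivial connection cases $w=\partial_r$ and $w=x$ (horizontal) into this Leibniz rule and bookkeeping which derivative terms survive.

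\textbf{Radial derivative.} For~\eqref{eq:hat_nabla_kappa_r} I would evaluate $\hat\nabla_{\partial_r}\gamma$ on arguments of the form $\frac{x_i}{r}$, which are $\hat\nabla_{\partial_r}$-parallel along radial curves by~\eqref{eq:parallele_vektorfelder_laengs_radialer_kurven}. Since $\gamma$ is a pullback, $\gamma(\frac{x_1}{r},\dots,\frac{x_k}{r}) = r^{-k}\tau^*\gamma(x_1,\dots,x_k)$, and because $\tau^*\gamma(x_1,\dots,x_k)$ is constant along radial curves (it depends only on the base point), applying $\partial_r$ gives the factor $-\frac{k}{r}$ while the $\hat\nabla_{\partial_r}\frac{x_i}{r}$ correction terms all vanish. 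Rescaling back yields $\hat\nabla_{\partial_r}\gamma = -\frac{k}{r}\gamma$. One must also check the mixed case where some argument is $\partial_r$; since $\gamma=\tau^*\gamma$ annihilates $\partial_r$ and $\hat\nabla_{\partial_r}\partial_r=0$, those contributions stay zero, so the formula is consistent on all of $\tau^*T\hat M$.

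\textbf{Horizontal derivative.} For~\eqref{eq:hat_nabla_gamma_x} I would expand $(\hat\nabla_x\gamma)(u_1,\dots,u_k)$ using the Leibniz rule and substitute $\hat\nabla_x u_i$. When $u_i=y_i$ is horizontal, equation~\eqref{eq:Levi-Civita_connection_of_the_cone_3} produces the tangential part $\nabla_x y_i$ (giving $\nabla_x\gamma$) plus a normal part $-r\langle x,y_i\rangle\partial_r$; the latter feeds $\partial_r$ into the $i$-th slot. When $u_i=\partial_r$, equation~\eqref{eq:Levi-Civita_connection_of_the_cone_2} gives $\hat\nabla_x\partial_r=\frac{1}{r}x$, inserting a horizontal vector into that slot. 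I would then recognize that both correction types are exactly encoded by the derivation action~\eqref{eq:derivation}: the operator $\frac{1}{r}\d r\otimes x$, interpreted as the endomorphism $v\mapsto \frac{1}{r}\d r(v)\,x$ (i.e. $\partial_r\mapsto\frac{1}{r}x$ and horizontal vectors $\mapsto 0$) together with the $-r\langle x,\cdot\rangle\partial_r$ term, reassembles via $A\cdot\gamma$ with the overall sign convention of~\eqref{eq:derivation} into the single term $-\frac{1}{r}\d r\otimes x\cdot\gamma$. The main obstacle here is purely bookkeeping: matching the two geometrically distinct correction terms (one from Gauß, one from Weingarten) to the single algebraic expression $\d r\otimes x\cdot\gamma$, and in particular getting the factor $\frac{1}{r}$ and the minus sign to line up with the definition in~\eqref{eq:derivation}.

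\textbf{Symmetric specialization.} Finally~\eqref{eq:hat_nabla_kappa_x} is the restriction of~\eqref{eq:hat_nabla_gamma_x} to a symmetric tensor $\kappa$. Here I would simplify the derivation term $\frac{1}{r}\d r\otimes x\cdot\kappa$ using symmetry of $\kappa$: the endomorphism $v\mapsto \frac{1}{r}\d r(v)\,x$ acting through~\eqref{eq:derivation} sends each $\partial_r$-slot to a contracted $x$-slot, and summing over the symmetric slots collapses to the symmetric product $\frac{1}{r}\d r\bullet x\lrcorner\kappa$ in the notation of Definition~\ref{de:product}(a) and~\eqref{eq:x_einsetzen}. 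So the only genuine work beyond~\eqref{eq:hat_nabla_gamma_x} is verifying that the symmetrization of the derivation term reproduces the symmetric product $\bullet$ with the correct combinatorial factor, which follows immediately once $\kappa=\kappa\circ\sigma$ is used. I expect no analytic difficulty anywhere; the entire lemma is a consequence of the cone connection formulas, and the only care needed is in the index bookkeeping of step two.
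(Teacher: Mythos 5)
Your computation is correct and is exactly what the paper intends: the lemma is stated there without proof, as an immediate consequence of~\eqref{eq:Levi-Civita_connection_of_the_cone_1}--\eqref{eq:Levi-Civita_connection_of_the_cone_2}, and the very same mechanism you use (Leibniz rule plus the difference tensor, i.e. $\hat\nabla_x S - \nabla_x S = x\lrcorner\Delta\cdot S$ with $\Delta$ as in~\eqref{eq:Delta}, the Gau\ss{} correction acting trivially because the pullback $\tau^*\gamma$ annihilates $\pd_r$) is how the paper later proves Lemma~\ref{le:parallel_in_the_horizontal_direction}. Your caution about the sign is in fact warranted: carrying out your step two with the convention~\eqref{eq:derivation} taken literally, the horizontal correction comes out as $+\frac{1}{r}\,(\d r\otimes x)\cdot\gamma$, which in the symmetric case equals $-\frac{1}{r}\,\d r\bullet x\lrcorner\kappa$, so your derivation reproduces~\eqref{eq:hat_nabla_kappa_x} exactly, while the minus sign printed in~\eqref{eq:hat_nabla_gamma_x} implicitly reads $\d r\otimes x\cdot\gamma$ as plain slot substitution rather than via the sign convention of~\eqref{eq:derivation} --- a notational wrinkle of the paper, not a gap in your argument.
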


\bigskip
\begin{remark}\label{re:reduction}
Every basis of $T_pM$ can be extended to a basis of $T_{(p,r)}\hat M$ using the radial vector $\pd_r|_{(p,r)}$. 
Thus, the principal $\GL_{n+1}(\R)$ fiber bundle of frames of the vector bundle $T\hat M$ 
admits a natural reduction to the principal $\GL_n(\R)$ fiber bundle of frames of the pullback vector bundle $\tau^*TM$. In particular, $\nabla$ induces a connection on 
every tensor bundle of $T\hat M$. Therefore the latter decomposes into a direct sum of $\nabla$-parallel subbundles
according to the branching rules for the Lie algebra pair $\gl_n(\R)\subset \gl_{n+1}(\R)$.
\end{remark}

For the vector bundle $\bbS_{\scaleto{\begin{array}{|c|c|}
\cline{1-2}
1 & 2 \\
\cline{1-2}
3 & 4 \\
\cline{1-2}
\end{array}}{12pt}}\, T^*\hat M$ of symmetrized algebraic curvature tensors of the cone these
branching rules are given as follows: 

\bigskip
\begin{lemma}\label{le:branching_rules}
The vector bundle $\bbS_{\scaleto{\begin{array}{|c|c|}
\cline{1-2}
1 & 2 \\
\cline{1-2}
3 & 4 \\
\cline{1-2}
\end{array}}{12pt}}\, T^*\hat M$ admits a decomposition into three direct summands
\begin{equation}\label{eq:glndec}
\tau^*\bbS_{\scaleto{\begin{array}{|c|c|}
\cline{1-2}
1 & 2 \\
\cline{1-2}
\end{array}}{6pt}}\,T^*M\oplus \tau^*\bbS_{\scaleto{\begin{array}{|c|c|}
\cline{1-2}
2 & 3 \\
\cline{1-2}
1 & \multicolumn{1}{c}{\;\;\;} \\
\cline{1-1}
\end{array}}{12pt}}\,T^*M \oplus \tau^*\bbS_{\scaleto{\begin{array}{|c|c|}
\cline{1-2}
1 & 2 \\
\cline{1-2}
3 & 4 \\
\cline{1-2}
\end{array}}{12pt}}\, T^*M
\end{equation}
which is parallel with respect to the Levi-Civita connection of $M$. Suitable projection maps are given as follows:
\begin{eqnarray}
\label{eq:projection_1}
\bbS_{\scaleto{\begin{array}{|c|c|}
\cline{1-2}
1 & 2 \\
\cline{1-2}
3 & 4 \\
\cline{1-2}
\end{array}}{12pt}}\, T^*\hat M\to \tau^*\bbS_{\scaleto{\begin{array}{|c|c|}
\cline{1-2}
1 & 2 \\
\cline{1-2}
\end{array}}{6pt}}\,T^*M, & & S \mapsto \alpha^S \;\;:= \;\; \frac{1}{2}\pd_r\lrcorner \pd_r \lrcorner S\;,\\
\bbS_{\scaleto{\begin{array}{|c|c|}
\cline{1-2}
1 & 2 \\
\cline{1-2}
3 & 4 \\
\cline{1-2}
\end{array}}{12pt}}\, T^*\hat M\to \tau^*\bbS_{\scaleto{\begin{array}{|c|c|}
\cline{1-2}
2 & 3 \\
\cline{1-2}
1  & \multicolumn{1}{c}{\;\;\;}\\
\cline{1-1} 
\end{array}}{12pt}}T^*M \;,& & S \mapsto \beta^S  \;\;:= \;\; \iota^* \pd_r \lrcorner S\;,\\
\bbS_{\scaleto{\begin{array}{|c|c|}
\cline{1-2}
1 & 2 \\
\cline{1-2}
3 & 4 \\
\cline{1-2}
\end{array}}{12pt}}\, T^*\hat M \to \tau^*\bbS_{\scaleto{\begin{array}{|c|c|}
\cline{1-2}
1 & 2 \\
\cline{1-2}
3  & 4\\
\cline{1-2} 
\end{array}}{12pt}}T^*M \;,& & S\mapsto \gamma^S  \;\;:= \;\; \iota^*S.
 \end{eqnarray}
Then every triple $(\alpha,\beta,\gamma)$ defines the symmetrized algebraic curvature tensor 
\begin{equation}\label{eq:S_explizit}
S :=   \alpha \owedge\d r \bullet \d r + \beta \owedge\d r + \gamma.
\end{equation}
\end{lemma}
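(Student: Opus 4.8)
The plan is to derive the statement by combining the abstract branching rule of Remark~\ref{re:reduction} with an explicit tensor computation that pins down the projections and the reconstruction map~\eqref{eq:S_explizit}. First I would record that, by Remark~\ref{re:reduction}, the structure group of $T\hat M$ reduces to the subgroup $\GL_n\subset\GL_{n+1}$ fixing $\pd_r$, so that $\bbS_{(2,2)}T^*\hat M$ splits into parallel subbundles according to the branching rule for $\gl_n\subset\gl_{n+1}$. Since $T^*\hat M\cong\tau^*T^*M\oplus\R\,\d r$ as a $\GL_n$-bundle, the decomposition of the irreducible $\GL_{n+1}$-module of shape $(2,2)$ is governed by the interlacing condition $2\geq\mu_1\geq 2\geq\mu_2\geq 0$, which forces $\mu\in\{(2),(2,1),(2,2)\}$, each occurring with multiplicity one. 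This already yields~\eqref{eq:glndec} as a parallel decomposition into exactly three summands; what remains is to identify these summands through the explicit maps stated in the lemma.

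Next I would fix the two structural facts on which all contractions rest: $\d r(\pd_r)=1$ while $\d r$ annihilates horizontal vectors, and $\iota^*\d r=0$ because $r\circ\iota\equiv 1$. The three candidate projections $\tfrac12\,\pd_r\lrcorner\pd_r\lrcorner$, $\iota^*\pd_r\lrcorner$ and $\iota^*$ are manifestly $\GL_n$-equivariant, and conversely the three assignments $\alpha\mapsto\alpha\owedge\d r\bullet\d r$, $\beta\mapsto\beta\owedge\d r$ and $\gamma\mapsto\gamma$ are equivariant with images isomorphic to the $\gl_n$-types $(2)$, $(2,1)$ and $(2,2)$; assembling them gives the reconstruction map~\eqref{eq:S_explizit}. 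That its image lies in $\bbS_{(2,2)}T^*\hat M$ is immediate, since each summand is a Kulkarni--Nomizu type product~\eqref{eq:def_Kulkarni_Nomizu_product} or a Cartan product~\eqref{eq:noch_ein_produkt} valued in symmetrized algebraic curvature tensors by construction.

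The heart of the argument is to verify that the projection triple and~\eqref{eq:S_explizit} are mutually inverse. I would first dispose of the off-diagonal terms on structural grounds: any tensor pulled back from $M$ is killed by $\pd_r\lrcorner$; the operator $\iota^*$ annihilates every term carrying a $\d r$-factor; and $\pd_r\lrcorner\pd_r\lrcorner$ kills both $\beta\owedge\d r$ and $\gamma$ because $\beta$ and $\gamma$ are horizontal and at most one radial slot is available. For the diagonal terms I would unwind the sixteen-term symmetrizer~\eqref{eq:Young_(2,2)}: exactly four terms survive the double radial contraction of $\alpha\owedge\d r\bullet\d r$, yielding $\pd_r\lrcorner\pd_r\lrcorner(\alpha\owedge\d r\bullet\d r)=2\alpha$ and hence $\alpha^S=\alpha$; likewise four terms survive in $\iota^*\pd_r\lrcorner(\beta\owedge\d r)$, and the defining symmetries of a type-$(2,1)$ tensor from Example~\ref{ex:Young_symmetrizer} (symmetry in two arguments together with vanishing total symmetrization, see~\eqref{eq:Young_(2,1)}) collapse these to $3\beta$, whence the factor $\tfrac13$ in~\eqref{eq:noch_ein_produkt} returns $\beta$; finally $\iota^*\gamma=\gamma$ is trivial. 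Thus~\eqref{eq:S_explizit} is a right inverse of the projection triple, and by the multiplicity-one dimension count from the branching rule the source and target bundles have equal rank, so the map is an isomorphism and the triple furnishes the claimed projections onto~\eqref{eq:glndec}.

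I expect the bookkeeping in the diagonal computations to be the main obstacle: one must track precisely which permutation terms of the degree-sixteen Young symmetrizer survive each contraction and check that the surviving constants ($2$ for the $\alpha$-term and $3$ for the $\beta$-term) cancel exactly against the normalizations $\tfrac14$, $\tfrac13$ and $\tfrac12$ hard-wired into~\eqref{eq:def_Kulkarni_Nomizu_product},~\eqref{eq:noch_ein_produkt} and~\eqref{eq:projection_1}. The $\beta$-term is the most delicate, since collapsing the four surviving terms to a single multiple of $\beta$ genuinely requires the algebraic identities characterizing $\bbS_{(2,1)}T^*M$ rather than mere symmetry in a pair of slots.
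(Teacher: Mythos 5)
Your proof is correct, and its computational core coincides with what the paper's own proof leaves implicit: the paper simply declares the verification of \eqref{eq:S_explizit} ``straightforward'', and this is exactly your diagonal bookkeeping, whose constants check out --- four surviving terms of the sixteen-term symmetrizer give $\partial_r\lrcorner\,\partial_r\lrcorner\,(\alpha\owedge \d r\bullet \d r)=2\alpha$ against the normalization $\tfrac{1}{2}$ in \eqref{eq:projection_1}, and four terms collapse to $3\beta$ (using both the symmetry in the last two slots and the vanishing total symmetrization of type $(2,1)$, as you correctly emphasize) against the $\tfrac{1}{3}$ in \eqref{eq:noch_ein_produkt}. Where you genuinely diverge is in concluding bijectivity. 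The paper does not count dimensions: having surjectivity from the right inverse, it proves injectivity of $S\mapsto(\alpha^S,\beta^S,\gamma^S)$ directly --- if all three components vanish, then $S(u_1,u_2,u_3,u_4)=0$ whenever at least two arguments are horizontal, by the exchange symmetries of a symmetrized algebraic curvature tensor, while if at most one argument is horizontal then at least three are proportional to $\partial_r$ and the exchange rule of length one between rows (the identity $S(u,u,u,v)=0$ from Example~\ref{ex:Young_symmetrizer}~(c)) forces the value to vanish. You instead invoke the interlacing branching rule for $\gl_n\subset\gl_{n+1}$ (so $\mu_1=2$, $\mu_2\in\{0,1,2\}$, each type occurring with multiplicity one), equate ranks, and upgrade the right inverse to an isomorphism. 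Both arguments are valid. The paper's route is more elementary and self-contained: Remark~\ref{re:reduction} enters only to guarantee parallelism of the splitting, and no multiplicity information is needed. Your route buys a conceptual dividend the paper never makes explicit, namely an a priori explanation of why exactly the three summands of types $(2)$, $(2,1)$, $(2,2)$ appear in \eqref{eq:glndec} and why this list is exhaustive; the price is reliance on the branching multiplicities, which the paper's direct injectivity argument avoids entirely.
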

\begin{proof}
It is straightforward to verify~\eqref{eq:S_explizit}. Hence the map which projects from $\bbS_{\scaleto{\begin{array}{|c|c|}
\cline{1-2}
1 & 2 \\
\cline{1-2}
3 & 4 \\
\cline{1-2}
\end{array}}{12pt}}\, T^*\hat M $ to $\tau^*\bbS_{\scaleto{\begin{array}{|c|c|}
\cline{1-2}
1 & 2 \\
\cline{1-2}
\end{array}}{6pt}}\,T^*M \oplus\bbS_{\scaleto{\begin{array}{|c|c|}
\cline{1-2}
2 & 3 \\
\cline{1-2}
1 & \multicolumn{1}{c}{\;\;\;} \\
\cline{1-1}
\end{array}}{12pt}}\,T^*M \oplus \bbS_{\scaleto{\begin{array}{|c|c|}
\cline{1-2}
1 & 2 \\
\cline{1-2}
3 & 4 \\
\cline{1-2}
\end{array}}{12pt}}\,T^ *M $ along $\tau$ is surjective. In order to demonstrate its injectivity, suppose $\alpha^S = 0$, $\beta^S = 0$ and $\gamma^S = 0$. 
Then $S(u_1,u_2,u_3,u_4)$ vanishes whenever at least two of the vectors $u_i$ are horizontal by means of the symmetries of a curvature tensor.
On the other hand, if at most one vector is horizontal, then at least three
are proportional to $\partial_r$ and thus $S(u_1,u_2,u_3,u_4) = 0$, since $S$
satisfies the corresponding exchange rule of order one between rows. This proves the  decomposition~\eqref{eq:glndec}.
\end{proof}

Therefore symmetrized algebraic curvature tensors of $T_{(p,t)}\hat M$ should be seen as triples 
\[
(\alpha,\beta,\gamma) \in \bbS_{\scaleto{\begin{array}{|c|c|}
\cline{1-2}
1 & 2 \\
\cline{1-2}
\end{array}}{6pt}}\,T_p^*M  \oplus \bbS_{\scaleto{\begin{array}{|c|c|}
\cline{1-2}
2 & 3 \\
\cline{1-2}
1 & \multicolumn{1}{c}{\;\;\;} \\
\cline{1-1}
\end{array}}{12pt}}\, T_p^*M \oplus  \bbS_{\scaleto{\begin{array}{|c|c|}
\cline{1-2}
1 & 2 \\
\cline{1-2}
3 & 4 \\
\cline{1-2}
\end{array}}{12pt}}\, T_p^*M.
\]  

In particular, the triple
\begin{equation}\label{eq:zerlegung_von_kappa_dach}
\left ( \alpha,\beta,\gamma \right ) 
\;\;=\;\; \left (r^2\, \kappa, r^3\,\kappa^1, r^4\,C^\kappa\right )
\end{equation}
corresponds with the associated algebraic curvature tensor on the cone $S^\kappa$ defined in~\eqref{eq:def_S_kappa}.

\subsection{Symmetric Killing 2-tensors whose associated symmetrized algebraic curvature tensor is parallel}
It is natural to ask under which conditions the symmetrized algebraic curvature
tensor $S^\kappa$ on the cone $\hat M$ associated with a symmetric Killing 2-tensor
$\kappa$ on $M$ is parallel with respect to the Levi-Civita connection of
$\hat M$. More general, we wish to understand the relation between parallel 
sections of $\bbS_{\scaleto{\begin{array}{|c|c|}
\cline{1-2}
1 & 2 \\
\cline{1-2}
3 & 4 \\
\cline{1-2}
\end{array}}{12pt}}\, T^*\hat M$  and symmetric Killing 2-tensors on $M$.

Let $S$ be a symmetrized algebraic curvature tensor on the cone 
seen as a triple $(\alpha,\beta,\gamma)$ according to~\eqref{eq:glndec}
and $S|_M$ denote the restriction of $S$ to $M$ via the natural inclusion $\iota \colon M \to \hat M$. 
Hence  $S|_M$ is the triple $(\alpha|_M,\beta|_M,\gamma|_M)$ where $\alpha|_M$, $\beta|_M$ and $\gamma|_M$ are sections of 
$\bbS_{\scaleto{\begin{array}{|c|c|}
\cline{1-2}
1 & 2 \\
\cline{1-2}
\end{array}}{6pt}}\,T^*M $, $\bbS_{\scaleto{\begin{array}{|c|c|}
\cline{1-2}
2 & 3 \\
\cline{1-2}
1 & \multicolumn{1}{c}{\;\;\;} \\
\cline{1-1}
\end{array}}{12pt}}\, T^*M$ and  $\bbS_{\scaleto{\begin{array}{|c|c|}
\cline{1-2}
1 & 2 \\
\cline{1-2}
3 & 4 \\
\cline{1-2}
\end{array}}{12pt}}\, T^*M$, respectively.

The following lemma shows that parallelity of $S$ along the radial direction does
not have anything to do with the Killing equation:

\bigskip
\begin{lemma}\label{le:parallel_in_the_radial_direction}
A section $S = (\alpha,\beta,\gamma)$ of  $\bbS_{\scaleto{\begin{array}{|c|c|}
\cline{1-2}
1 & 2 \\
\cline{1-2}
3 & 4 \\
\cline{1-2}
\end{array}}{12pt}}\, T^*\hat M$ 
is parallel along the radial direction $\hat\nabla_{\pd_r} S = 0$ if and only if $S = (r^2\,\alpha|_M,r^3\, \beta|_M,r^4\,\gamma|_M)$.
\end{lemma}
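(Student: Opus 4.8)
The plan is to differentiate the explicit expansion~\eqref{eq:S_explizit}, namely $S = \alpha\owedge\d r\bullet\d r + \beta\owedge\d r + \gamma$, term by term along $\pd_r$. Since the products $\owedge$ and $\bullet$ are $\GL$-invariant bilinear maps, the connection $\hat\nabla$ acts on them as a derivation; combined with $\hat\nabla_{\pd_r}\d r = 0$ from~\eqref{eq:hat_nabla_diff_r}, every term in which a factor $\d r$ gets differentiated drops out, so that
\[
\hat\nabla_{\pd_r}S \;=\; (\hat\nabla_{\pd_r}\alpha)\owedge\d r\bullet\d r \;+\; (\hat\nabla_{\pd_r}\beta)\owedge\d r \;+\; \hat\nabla_{\pd_r}\gamma.
\]
First I would argue that these three summands lie in the three distinct parallel subbundles of Lemma~\ref{le:branching_rules}. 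Indeed, differentiation along the radial direction preserves the Young symmetry type of each horizontal component (this is the compatibility of $\hat\nabla$ with the reduction of Remark~\ref{re:reduction}, and it is also visible directly from the formula established below), so $\hat\nabla_{\pd_r}\alpha$, $\hat\nabla_{\pd_r}\beta$ and $\hat\nabla_{\pd_r}\gamma$ are again sections of the $\bbS_{(2)}$-, $\bbS_{(2,1)}$- and $\bbS_{(2,2)}$-summands. Because the decomposition~\eqref{eq:glndec} is $\hat\nabla$-parallel, it follows that $\hat\nabla_{\pd_r}S = 0$ holds if and only if each of $\hat\nabla_{\pd_r}\alpha$, $\hat\nabla_{\pd_r}\beta$, $\hat\nabla_{\pd_r}\gamma$ vanishes separately.

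Next I would compute $\hat\nabla_{\pd_r}$ on an arbitrary $r$-dependent horizontal $k$-tensor $\delta$ on $M$. Equation~\eqref{eq:hat_nabla_kappa_r} gives $\hat\nabla_{\pd_r}\delta_0 = -\tfrac{k}{r}\delta_0$ whenever $\delta_0$ is the pullback of a fixed ($r$-independent) tensor of valence $k$; applying the Leibniz rule to $\delta = f(r)\,\delta_0$ and extending by linearity yields
\[
\hat\nabla_{\pd_r}\delta \;=\; \dot\delta \;-\; \tfrac{k}{r}\,\delta,
\]
where $\dot\delta$ denotes the ordinary $r$-derivative of $\delta$ read off in a radially parallel frame (legitimized by~\eqref{eq:parallele_vektorfelder_laengs_radialer_kurven}, so that the coframe $\{\d r,\,r\,e^i\}$ is $\hat\nabla_{\pd_r}$-parallel). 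Applying this with $k = 2,3,4$ to $\alpha$, $\beta$, $\gamma$ turns the three vanishing conditions into the linear ODEs $\dot\alpha = \tfrac{2}{r}\alpha$, $\dot\beta = \tfrac{3}{r}\beta$ and $\dot\gamma = \tfrac{4}{r}\gamma$ along each radial line.

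Finally I would integrate these ODEs. Each has the unique solution $\alpha(r) = r^2\alpha(1)$, $\beta(r) = r^3\beta(1)$, $\gamma(r) = r^4\gamma(1)$, and since $\alpha(1) = \alpha|_M$, $\beta(1) = \beta|_M$, $\gamma(1) = \gamma|_M$ by the definition of the restriction along $\iota$, this is precisely the asserted normal form $S = (r^2\,\alpha|_M,\, r^3\,\beta|_M,\, r^4\,\gamma|_M)$. The converse is the same computation read backwards: substituting this normal form, the three ODEs hold identically and hence $\hat\nabla_{\pd_r}S = 0$. I expect the only genuinely delicate point to be the bookkeeping in the middle step — making precise the splitting $\hat\nabla_{\pd_r}\delta = \dot\delta - \tfrac{k}{r}\delta$ and confirming that $\hat\nabla_{\pd_r}$ respects the branching of Lemma~\ref{le:branching_rules}; once the radially parallel frame is fixed, everything reduces to scalar ODEs and the remainder is routine.
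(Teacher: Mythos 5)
Your proposal is correct and takes essentially the same route as the paper: both arguments rest on the radially parallel frame provided by~\eqref{eq:Levi-Civita_connection_of_the_cone_1} and~\eqref{eq:parallele_vektorfelder_laengs_radialer_kurven} together with the expansion~\eqref{eq:S_explizit}, reducing $\hat\nabla_{\pd_r}S=0$ to the scaling behaviour $r^2,r^3,r^4$ of the three components along radial lines; you merely spell out the componentwise ODE step that the paper compresses into ``comparing this with~\eqref{eq:S_explizit}''. One cosmetic slip: in $\hat\nabla_{\pd_r}\delta = \dot\delta - \tfrac{k}{r}\delta$ the dot must denote the $r$-derivative of the components in the \emph{pulled-back} ($r$-independent) coframe $\{e^i\}$ coming from $M$ --- in the radially parallel coframe $\{\d r,\, r\,e^i\}$ the plain $r$-derivative already equals $\hat\nabla_{\pd_r}\delta$ itself --- but this does not affect your ODEs $\dot\alpha=\tfrac{2}{r}\alpha$, $\dot\beta=\tfrac{3}{r}\beta$, $\dot\gamma=\tfrac{4}{r}\gamma$ or their solutions.
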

\begin{proof}
Recall that the radial vector field $\pd_r$ is covariantly constant in the radial direction according to~\eqref{eq:Levi-Civita_connection_of_the_cone_1} and
that $\frac{x}{r}$ is a parallel vector field along the radial curve $r\mapsto (p,r)$ for each $x\in T_pM$, see~\eqref{eq:parallele_vektorfelder_laengs_radialer_kurven}.
Comparing this with~\eqref{eq:S_explizit}, we see that a triple  $(\alpha,\beta,\gamma)$ is covariantly constant in the radial direction $\hat\nabla_{\pd_r} S = 0$ if and only if $S = (r^2\, \alpha|_M,r^3\, \beta|_M,r^4\, \gamma|_M)$.
\end{proof}

Therefore it remains to understand the condition  $\hat\nabla_x S = 0$ for horizontal vectors $x$.
For this we recall that the vector bundle $T\hat M = \tau^*TM \oplus \R$ carries two relevant
connections: there is the Levi-Civita connection $\hat \nabla$ of the metric 
cone and also the Levi Civita connection of $M$ according to Remark~\ref{re:reduction}. Because
of~\eqref{eq:Levi-Civita_connection_of_the_cone_3},~\eqref{eq:Levi-Civita_connection_of_the_cone_2}
the difference tensor $\Delta := \hat \nabla - \nabla $ satisfies
\begin{equation}\label{eq:Delta}
\begin{array}{l} \Delta(x,y) = - r\langle x,y \rangle,\\
\Delta(x,\pd_r|_{(p,r)}) = \frac{1}{r} x
\end{array}
\end{equation}
for all $p\in M$, $r>0$ and $x\in T_pM$.

\bigskip
\begin{lemma}\label{le:parallel_in_the_horizontal_direction}
Let $S$ be a symmetrized algebraic curvature tensor of $T_{(p,r)}\hat M$ described as a triple $(\alpha,\beta,\gamma)$ according to~\eqref{eq:glndec}.
For each horizontal  vector $x\in T_pM$ the difference $\hat \nabla_xS - \nabla_xS$
is given by the triple
\begin{equation}\label{eq:second_fundamental_form_of_bbP}
\left (  \begin{array}{lll}
 - \frac{1}{r}\, x\lrcorner \beta & & \\
 - \frac{1}{r}\, x\lrcorner \gamma  & \; + 2\, r\, \alpha \owedge x^\sharp &\\
&\mbox{\ } + r\, \beta\owedge x^\sharp &
\end{array} \right ).
 \end{equation} 
\end{lemma}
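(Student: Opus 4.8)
The plan is to express $\hat\nabla_x S - \nabla_x S$ through the difference tensor $\Delta = \hat\nabla-\nabla$ and then read off its three components using the projections of Lemma~\ref{le:branching_rules}. Both $\hat\nabla$ and the connection $\nabla$ induced on $T\hat M$ in Remark~\ref{re:reduction} preserve the bundle of symmetrized algebraic curvature tensors, so their difference acts on the covariant $4$-tensor $S$ as the derivation induced by the endomorphism $\Delta_x := \Delta(x,\,\cdot\,)$; that is, $\hat\nabla_x S - \nabla_x S = \Delta_x\cdot S$ in the sense of~\eqref{eq:derivation} (the sign matching because for a $1$-form $\omega$ one has $(\hat\nabla_x\omega-\nabla_x\omega)(y)=-\omega(\Delta_x y)$). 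In particular $S' := \Delta_x\cdot S$ is again a symmetrized algebraic curvature tensor on $\hat M$, hence decomposes as a triple $(\alpha',\beta',\gamma')$, and it suffices to compute $\alpha'=\tfrac12\pd_r\lrcorner\pd_r\lrcorner S'$, $\beta'=\iota^*\pd_r\lrcorner S'$ and $\gamma'=\iota^*S'$.

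First I would record the two values of $\Delta_x$ from~\eqref{eq:Delta}: on a horizontal vector $y$ it gives $\Delta_x y = -r\langle x,y\rangle\pd_r$, while $\Delta_x\pd_r=\tfrac1r x$, so the derivation $\Delta_x$ turns exactly one radial slot horizontal and vice versa. Substituting into $S'(u_1,\ldots,u_4)=-\sum_i S(\ldots,\Delta_x u_i,\ldots)$ and feeding in the appropriate number of copies of $\pd_r$, every term becomes an evaluation of $S$ on horizontal and radial vectors, which I would simplify using the curvature symmetries and the two facts already exploited in Lemma~\ref{le:branching_rules}: any value of $S$ with at least three radial arguments vanishes, and $S(\pd_r,a,b,c)=\beta(a,b,c)$, $S(a,b,c,d)=\gamma(a,b,c,d)$ on horizontal arguments. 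The remaining mixed terms are handled by the symmetrized first Bianchi identity
\[
S(v_1,v_2,v_3,v_4)+S(v_1,v_3,v_2,v_4)+S(v_1,v_4,v_2,v_3)=0,
\]
which together with pair symmetry yields $S(\pd_r,\pd_r,a,b)=2\alpha(a,b)$ and $S(\pd_r,a,\pd_r,b)=-\alpha(a,b)$. The $\alpha'$ projection is then immediate: the triply radial terms drop out and the two survivors combine to $-\tfrac2r\beta(x,y_3,y_4)$, giving $\alpha'=-\tfrac1r x\lrcorner\beta$. The $\beta'$ projection produces $-\tfrac1r\gamma(x,\,\cdot\,,\,\cdot\,,\,\cdot\,)$ together with an $\alpha$-part $2r\langle x,y_1\rangle\alpha(y_2,y_3)-r\langle x,y_2\rangle\alpha(y_1,y_3)-r\langle x,y_3\rangle\alpha(y_1,y_2)$, and the $\gamma'$ projection produces $\gamma'(y_1,\ldots,y_4)=r\sum_i\langle x,y_i\rangle S(\ldots\pd_r\ldots)$, where $\pd_r$ sits in the $i$-th slot.

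It then remains to recognise these explicit expressions as the products of Definition~\ref{de:product}. For $\beta'$ I would expand the symmetrizer of~\eqref{eq:Young_(2,1)} on $\langle x,v_2\rangle\alpha(v_1,v_3)$ and check against~\eqref{eq:product_of_a_one_form_and_a_symmetric_2-tensor} that the $\alpha$-part above is exactly $2r\,\alpha\owedge x^\sharp$; this is a short four-term computation confirming $\beta'=-\tfrac1r x\lrcorner\gamma+2r\,\alpha\owedge x^\sharp$. The genuinely laborious step, and the main obstacle, is the $\gamma'$ component: after moving each $\pd_r$ into the first slot by pair symmetry one obtains a sum of four terms $r\langle x,y_i\rangle\beta(\cdots)$, and matching this with $r\,\beta\owedge x^\sharp$ as defined in~\eqref{eq:noch_ein_produkt} would require expanding the sixteen-term symmetrizer~\eqref{eq:Young_(2,2)}. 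The cleanest way to avoid this grind is to observe that $\gamma'=\iota^*S'$ is automatically of symmetry type $(2,2)$, so both $\gamma'$ and $r\,\beta\owedge x^\sharp$ are $\GL(n)$-equivariant images of a type-$(2,1)$ tensor tensored with the $1$-form $x^\sharp$; since the type-$(2,2)$ component occurs with multiplicity one in that product, the two sides agree up to a single scalar, which I would pin down by evaluating on one convenient choice of $x$ and $y_1,\ldots,y_4$. This turns the final verification into a normalization check rather than a full expansion, completing the identification of the triple.
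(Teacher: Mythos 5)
Your proposal is correct, and for the first two components of the triple it is essentially the paper's own proof: both write $\hat\nabla_xS-\nabla_xS=x\lrcorner\Delta\cdot S$ with $\Delta$ as in~\eqref{eq:Delta}, discard all terms with three radial arguments, and reduce the mixed terms via the symmetrized Bianchi identity --- your identities $S(\pd_r,\pd_r,a,b)=2\alpha(a,b)$ and $S(\pd_r,a,\pd_r,b)=-\alpha(a,b)$ are exactly the paper's relation $2\,S(x,z,y,z)=-S(z,z,x,y)$, and your four-term check of the $\alpha$-part of $\beta'$ against~\eqref{eq:product_of_a_one_form_and_a_symmetric_2-tensor} matches the paper verbatim. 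The genuine divergence is in the $\gamma'$-component. The paper proves $\gamma'=r\,\beta\owedge x^\sharp$ head-on: it groups the sixteen terms of the $(2,2)$-symmetrizer applied to $\langle x,y_1\rangle\beta(y_2,y_3,y_4)$ as $\cyclic_{12}\cyclic_{34}$ of four summands and applies the first Bianchi identity twice to show that this symmetrizer equals $\tfrac{3}{r}\,x\lrcorner\Delta S(y_1,\ldots,y_4)$, which together with the factor $\tfrac13$ in~\eqref{eq:noch_ein_produkt} gives the claim for all arguments at once. You replace this by a Schur-lemma argument: since $\gamma'$ is a $\GL(n)$-equivariant bilinear function of $(\beta,x^\sharp)$ --- which indeed holds, because the one-radial-slot values of $S$ reduce to $\beta$ by pair symmetry, so $\gamma'(y_1,\ldots,y_4)=r\sum_i\langle x,y_i\rangle\beta(\cdots)$ involves only tensor products and permutations --- and since the shape $(2,2)$ occurs with multiplicity one in $(2,1)\otimes(1)$ by Littlewood--Richardson, the two sides agree up to one universal scalar, fixed by a single evaluation. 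This is legitimate and is in fact the same multiplicity-one principle by which the paper defines $\owedge$ in Section~\ref{se:product}; it buys you freedom from the double Bianchi manipulation, at the cost that the normalization must still be carried out on an explicit $\beta$ of type $(2,1)$ with $\beta\owedge x^\sharp\neq 0$ (unproblematic for $\dim M\geq 2$; for $\dim M=1$ both sides vanish identically, so nothing is lost), whereas the paper's computation is self-contained and needs no case distinction.
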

\begin{proof}
We have $\hat \nabla_xS - \nabla_xS = x\lrcorner \Delta\cdot S$, where $\Delta$ is defined by~\eqref{eq:Delta} and $\cdot$ is the usual action of endomorphisms on tensors by algebraic derivation, see~\eqref{eq:derivation}. Hence,
\begin{align*}
x\lrcorner \Delta S(\partial_r,\partial_r,y_1,y_2) &\;\;=\;\; \frac{- 2}{r}\underbrace{\langle \partial_r|_p,\partial_r|_p \rangle}_{=1} S( \partial_r,x,y_1,y_2).
\end{align*}
This yields the first component in~\eqref{eq:second_fundamental_form_of_bbP}. For the second, we use that the cyclic sum of $y\lrcorner S$ vanishes for fixed $y$. 
Hence $ 2\, S(x,z,y,z) = 2\, S(y,z,x,z) = - S(y,x,z,z) = - S(z,z,x,y)$. We get
\begin{align*}
& x\lrcorner \Delta S(\partial_r,y_1,y_2,y_3) &\;\;=\;\;& \left \lbrace\begin{array}{cc}- \frac{1}{r} S(x,y_1,y_2,y_3)  & + r\, \langle x,y_1 \rangle S(\partial_r|_p,\partial_r|_p,y_2,y_3)\\
+ r\, \langle x,y_2 \rangle S(\partial_r|_p,y_1,\partial_r|_p,y_3) & + r\, \langle x,y_3\rangle S(\partial_r|_p,y_1,y_2,\partial_r|_p)  \end{array} \right \rbrace\\
& &\;\;=\;\;&  \left \lbrace\begin{array}{cc}-\frac{1}{r} S(x,y_1,y_2,y_3)  & + r\, \langle x,y_1 \rangle S(\partial_r|_p,\partial_r|_p,y_2,y_3)\\
- \frac{1}{2} r\, \langle x,y_2 \rangle S(\partial_r|_p,\partial_r|_p,y_1,y_3) & - \frac{1}{2}r\, \langle x,y_3 \rangle S(\partial_r|_p,\partial_r|_p,y_1,y_2)  \end{array} \right \rbrace\\
& &\;\;=\;\;& - \frac{1}{r} S(x,y_1,y_2,y_3) + r\, \rmS_{\scaleto{\begin{array}{|c|c|}
\cline{1-2}
2 & 3 \\
\cline{1-2}
1 \\
\cline{1-1} 
\end{array}}{12pt}}\, \langle x, y_1\rangle \alpha(y_2,y_3)\\
& & \;\;\stackrel{\eqref{eq:product_of_a_one_form_and_a_symmetric_2-tensor}}{=}\;\; &  - \frac{1}{r} \gamma(x,y_1,y_2,y_3) + 2\,r\,  \alpha\owedge x^\sharp (y_1,y_2,y_3)
\end{align*}
for all $x,y_1,\ldots,y_3\in T_pM$.  For the last component: on the one side, we have
\begin{align*}
x\lrcorner \Delta S(y_1,y_2,y_3,y_4)\;\;=\;\;&  \left \lbrace \begin{array}{c}\ \;r\, \langle x,y_1\rangle S(\partial_r|_p,y_2,y_3,y_4)
+ r\, \langle x,y_2\rangle S(y_1,\partial_r|_p,y_3,y_4)\\
+ r\, \langle x,y_3\rangle S(y_1,y_2,\partial_r|_p,y_4)
+ r\, \langle x,y_4\rangle S(y_1,y_2,y_3,\partial_r|_p)\end{array}\right \rbrace\\
\end{align*}
for all $x,y_1,\ldots,y_4\in T_pM$. On the other side,
\begin{align*}
3\,  \beta \owedge x^\sharp (x,y_1,y_2,y_3,y_4)\;\;\stackrel{\eqref{eq:noch_ein_produkt}}{=}\;\; &
\rmS_{\scaleto{\begin{array}{|c|c|}
\cline{1-2}
1 & 2 \\
\cline{1-2}
3 & 4 \\
\cline{1-2} 
\end{array}}{12pt}}\, \langle x, y_1 \rangle \beta (y_2,y_3,y_4) \\
\;\;=\;\; & \cyclic_{12}\cyclic_{34}\left ( \begin{array}{c}\langle x,y_1\rangle S(\partial_r|_p,y_2,y_3,y_4)  \\
- \langle x,y_3\rangle S(\partial_r|_p,y_2,y_1,y_4)\\
- \langle x,y_1\rangle S(\partial_r|_p,y_4,y_3,y_2)\\
+ \langle x,y_3\rangle S(\partial_r|_p,y_4,y_1,y_2)\end{array}\right ).
\end{align*}
The first and the last line of the above term yield
\begin{equation*}
\cyclic_{12}\cyclic_{34}\left ( \begin{array}{c}\langle x,y_1\rangle S(\partial_r|_p,y_2,y_3,y_4)  \\
+ \langle x,y_3\rangle S(\partial_r|_p,y_4,y_1,y_2)\end{array}\right ) \;\;=\;\;
\cyclic_{12}\cyclic_{34}\left ( \begin{array}{c}\langle x,y_1\rangle S(\partial_r|_p,y_2,y_3,y_4)  \\
+ \langle x,y_3\rangle S(y_1,y_2,\partial_r|_p,y_4)\end{array}\right )\\
\end{equation*}
which gives $\frac{2}{r}  x\lrcorner \Delta S(y_1,y_2,y_3,y_4)$. The two lines in the middle are by the Bianchi identity
\begin{align*}
- \cyclic_{12}\cyclic_{34} \left ( \begin{array}{c}
\langle x,y_3\rangle S(\partial_r|_p,y_2,y_1,y_4)\\
+ \langle x,y_1\rangle S(\partial_r|_p,y_4,y_3,y_2)
\end{array}\right ) &\;\;=\;\;& \cyclic_{12}\cyclic_{34} \left ( \begin{array}{c}
\langle x,y_3 \rangle\big ( S(y_1,y_2,\partial_r|_p,y_4) + S(\partial_r|_p,y_1,y_2,y_4)\big )\\
+ \langle x,y_1\rangle \big (S(y_3,y_4,\partial_r|_p,y_2) + S(\partial_r|_p,y_3,y_4,y_2)\big )
\end{array}\right ).
\end{align*}
Using the Bianchi identity in the form mentioned further above, this yields
\begin{align*}
\cyclic_{12}\cyclic_{34} \left ( \begin{array}{c}
\langle x,y_3 \rangle\big ( S(y_2,y_1,\partial_r|_p,y_4) - \frac{1}{2}S(y_1,y_2,\partial_r|_p,y_4)\big )\\
\langle x,y_1\rangle \big (S(y_4,y_3,\partial_r|_p,y_2) - \frac{1}{2} S(y_3,y_4,\partial_r|_p,y_2)\big )
\end{array}\right ),
\end{align*}
which adds  $\frac{1}{r}x\lrcorner \Delta S(y_1,y_2,y_3,y_4)$ to the Young symmetrizer. We conclude that 
\begin{align*}
\rmS_{\scaleto{\begin{array}{|c|c|}
\cline{1-2}
1 & 2 \\
\cline{1-2}
3 & 4 \\
\cline{1-2} 
\end{array}}{12pt}}\,\langle x, y_1 \rangle \partial_r\lrcorner S(y_2,y_3,y_4)\;\;=\;\;\frac{3}{r}x\lrcorner \Delta S(y_1,y_2,y_3,y_4).
\end{align*}
This yields the last component of~\eqref{eq:second_fundamental_form_of_bbP}.
\end{proof}

The following proposition characterizes Killing tensors whose associated
algebraic curvature tensor is parallel on the cone. This can be seen as
the analogue of~\cite[Lemma~3.2.1]{Se} for symmetric Killing 2-tensors.

\bigskip
\begin{proposition}\label{p:prolong}
Let $S$ be a symmetrized algebraic curvature tensor on the cone seen as a triple $(\alpha,\beta,\gamma)$ 
according to~\eqref{eq:glndec}. The following assertions (a) - (d) are equivalent:
\begin{enumerate}
\item  $\hat\nabla S = 0$.
\item The pair $(\kappa,C)\;:=\;(\alpha|_M,\gamma|_M)$ satisfies~\eqref{eq:alg_curv_tensor_1}-\eqref{eq:alg_curv_tensor_2}, $\nabla\kappa = \beta|_M$ and $S =S^\kappa$.
\item There exists a symmetric Killing 2-tensor $\kappa$ such that $S$ is the associated
  symmetrized algebraic curvature tensor $S^\kappa$ and the following equations hold:
\begin{align}
\label{eq:prolong_1_speziell}
\nabla_x \kappa\;\, &\;\;=\;\;x\lrcorner \kappa^1,\\
\label{eq:prolong_2_speziell}
\nabla_x\kappa^1 &\;\;=\;\;x\lrcorner \kappa^2 + x\lrcorner (g\owedge \kappa) - 2\, \kappa\owedge x^\sharp ,\\
\label{eq:prolong_3_speziell}
\nabla_x \kappa^2 &\;\;=\;\;-  g\owedge x\lrcorner \kappa^1  - \kappa^1\owedge  x^\sharp .
\end{align}
\item There exists a symmetric Killing 2-tensor $\kappa$ such that $S = S^\kappa$ and  $S^\kappa|_M$ is $\hat\nabla$-parallel.
\end{enumerate}
\end{proposition}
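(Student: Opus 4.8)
The plan is to use the two preceding lemmas to turn $\hat\nabla S=0$ into a single first-order system on $M$, and then to recognise that system as the special Killing prolongation. First I would split $\hat\nabla S=0$ into its radial and horizontal parts. By Lemma~\ref{le:parallel_in_the_radial_direction} the radial equation $\hat\nabla_{\pd_r}S=0$ is equivalent to the scaling $S=(r^2a,\,r^3b,\,r^4c)$ with $(a,b,c):=(\alpha|_M,\beta|_M,\gamma|_M)$. Substituting this into Lemma~\ref{le:parallel_in_the_horizontal_direction} and using that $r$ is constant along horizontal directions (so that $\nabla_x(r^k\,\cdot\,)=r^k\nabla_x(\cdot)$ for the connection of Remark~\ref{re:reduction}), each of the three components of $\hat\nabla_xS$ factors as a power of $r$ times an $r$-independent expression; cancelling these powers in~\eqref{eq:second_fundamental_form_of_bbP}, the horizontal equation $\hat\nabla_xS=0$ becomes the system
\[
(\star_1)\colon\ \nabla_xa=x\lrcorner b,\qquad
(\star_2)\colon\ \nabla_xb=x\lrcorner c-2\,a\owedge x^\sharp,\qquad
(\star_3)\colon\ \nabla_xc=-\,b\owedge x^\sharp .
\]
Thus (a) is equivalent to the scaling together with $(\star_1)$--$(\star_3)$, and this is the computational core from which the remaining statements are read off.

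Next I would interpret the system. Because $b$ is a section of $\bbS_{(2,1)}T^*M$, the tensor $x\lrcorner b$ is symmetric with vanishing completely symmetric part, so $(\star_1)$ says precisely that $\nabla a$ has no $\Sym^3$-component; that is, $\kappa:=a$ is a symmetric Killing $2$-tensor, and simultaneously $b=\nabla\kappa=\kappa^1$. Granting this, $(\star_2)$ rearranges to $x\lrcorner c=\nabla_x\nabla\kappa+2\,\kappa\owedge x^\sharp$, which is exactly~\eqref{eq:alg_curv_tensor_1} for $C:=c=\gamma|_M$, and $(\star_3)$ is~\eqref{eq:alg_curv_tensor_2} verbatim. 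Hence (a) already forces the pair $(\kappa,C)=(\alpha|_M,\gamma|_M)$ to satisfy~\eqref{eq:alg_curv_tensor_1}--\eqref{eq:alg_curv_tensor_2} together with $\nabla\kappa=\beta|_M$, i.e.\ all of statement (b) except the claim $S=S^\kappa$.

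The main obstacle is therefore the identification $c=C^\kappa$ (equivalently $S=S^\kappa$). Here I would argue algebraically: reading $(\star_2)$ as an equality of $4$-tensors gives $c(x,y,z,w)=\nabla^2_{x,y}\kappa(z,w)+2\,(\kappa\owedge x^\sharp)(y,z,w)$, and since $c$ is a symmetrized algebraic curvature tensor it is fixed by the Young projector $\rmP_{(2,2)}$ of~\eqref{eq:kappa_prolong_2}. Applying that projector to the right-hand side sends the first summand to $\kappa^2$ by definition and, by the purely algebraic identity $\rmP_{(2,2)}\bigl[\,2\,\kappa\owedge(\cdot)^\sharp\,\bigr]=\kappa\owedge g$ (a direct consequence of the normalisations fixed in Definition~\ref{de:product}, the symmetrisation producing exactly the terms $\langle v_i,v_j\rangle\kappa(v_k,v_l)$ of a Kulkarni--Nomizu product), sends the second to $\kappa\owedge g$. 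Thus $c=\kappa^2+\kappa\owedge g=C^\kappa$ by~\eqref{eq:def_C_kappa}, so $S=S^\kappa$; this simultaneously establishes the assertion $C=C^\kappa$ of Remark~\ref{re:main_2} and completes (a)~$\Rightarrow$~(b).

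It remains to close the cycle. For (b)~$\Rightarrow$~(c) I substitute $C=C^\kappa=\kappa^2+\kappa\owedge g$ into~\eqref{eq:alg_curv_tensor_1}--\eqref{eq:alg_curv_tensor_2}; using $\nabla g=0$, the commutativity $\alpha_1\owedge\alpha_2=\alpha_2\owedge\alpha_1$ of the Kulkarni--Nomizu product on symmetric $2$-tensors, and $\nabla_x\kappa=x\lrcorner\kappa^1$, these rewrite termwise as~\eqref{eq:prolong_2_speziell}--\eqref{eq:prolong_3_speziell}, while~\eqref{eq:prolong_1_speziell} is the Killing condition itself. Running the same substitution backwards re-derives $(\star_1)$--$(\star_3)$, and since $S=S^\kappa$ already carries the radial scaling~\eqref{eq:zerlegung_von_kappa_dach}, this yields (c)~$\Rightarrow$~(a). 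Finally (a)~$\Leftrightarrow$~(d) is immediate from the core: any $S^\kappa$ has the radial scaling of~\eqref{eq:zerlegung_von_kappa_dach}, so $\hat\nabla_{\pd_r}S^\kappa=0$ holds automatically, and the horizontal equations $(\star_1)$--$(\star_3)$ are $r$-independent; therefore $\hat\nabla S^\kappa$ vanishes on all of $\hat M$ as soon as it vanishes along $M$, which is exactly the content of (d).
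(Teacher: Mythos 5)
Your proposal is correct and takes essentially the same route as the paper: both split $\hat\nabla S=0$ via Lemmas~\ref{le:parallel_in_the_radial_direction} and~\ref{le:parallel_in_the_horizontal_direction} into the radial scaling $(r^2\alpha|_M,r^3\beta|_M,r^4\gamma|_M)$ plus the first-order system $(\star_1)$--$(\star_3)$, identify $C=C^\kappa$ by applying the Young projector of shape $(2,2)$ to~\eqref{eq:alg_curv_tensor_1} (your asserted identity $\rmP\bigl[2\,\kappa\owedge x^\sharp\bigr]=\kappa\owedge g$ is exactly the paper's ``straightforward consideration'' with hook numbers $h_{(2,1)}=3$, $h_{(2,2)}=12$), and pass from $r=1$ to all $r$ by the common $r$-scaling of $x\lrcorner\Delta S^\kappa$ and $S^\kappa$. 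The only deviation is organizational: you prove the core equivalence first and place the identification $c=C^\kappa$ inside (a)$\Rightarrow$(b), whereas the paper runs the cycle (a)$\Rightarrow$(b)$\Rightarrow$(c)$\Rightarrow$(d)$\Rightarrow$(a) and carries out that identification in (b)$\Rightarrow$(c).
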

\begin{proof}
For $(a)\Rightarrow (b)$: if $\hat\nabla_x S|_{(p,1)} = 0$, then~\eqref{eq:second_fundamental_form_of_bbP} implies
\begin{equation*}
\left (  \begin{array}{l}
 \nabla_x \alpha \\
\nabla_x\beta\\
\nabla_x\gamma
\end{array} \right )\;\;=\;\;\left (  \begin{array}{lll}
x\lrcorner \beta & & \\
x\lrcorner \gamma  & - 2\,  \alpha\owedge x^\sharp  &\\
&\quad - \beta  \owedge x^\sharp &
\end{array} \right )
 \end{equation*}
for all $p\in M$ and $x\in T_pM$. From this the first two assertions of~(b)
follow, i.e. $S = S^\kappa$ along $r = 1$. Further, $\hat\nabla S = 0$ yields
that $S$ has the same radial dependence as $S^\kappa$ according to Lemma~\ref{le:parallel_in_the_radial_direction}.
Therefore we conclude that $S = S^\kappa$.

For $(b)\Rightarrow (c)$: The first equation~\eqref{eq:prolong_1_speziell} of~(c) is a rephrasing of $\nabla\kappa = \beta|_M$. 
Clearly, this implies that $\kappa$ is Killing. Further, 
\[
C(x_1,x_2,x_3,x_4)  \;\stackrel{\text{Example}~\ref{ex:Young_symmetrizer}~\text{(c)}}{=}\; \rmP_{\scaleto{\begin{array}{|c|c|}
\cline{1-2}
1 & 2 \\
\cline{1-2}
3 & 4\\
\cline{1-2} 
\end{array}}{12pt}} C(x_1,x_2,x_3,x_4).
\] 
Moreover, a straightforward consideration shows that
\begin{align*}
\rmP_{\scaleto{\begin{array}{|c|c|}
\cline{1-2}
1 & 2 \\
\cline{1-2}
3 & 4\\
\cline{1-2} 
\end{array}}{12pt}} \rmP_{\scaleto{\begin{array}{|c|c|}
\cline{1-2}
3 & 4 \\
\cline{1-2}
2 &  \multicolumn{1}{c}{\;\;\;} \\
\cline{1-1}  
\end{array}}{12pt}} \langle x_1, x_2\rangle \kappa(x_3,x_4)\;\;=\;\;\rmP_{\scaleto{\begin{array}{|c|c|}
\cline{1-2}
1 & 2 \\
\cline{1-2}
3 & 4\\
\cline{1-2} 
\end{array}}{12pt}} \langle x_1, x_2\rangle \kappa(x_3,x_4).
\end{align*}
Therefore applying the Young projector $\rmP_{\scaleto{\begin{array}{|c|c|}
\cline{1-2}
1 & 2 \\
\cline{1-2}
3 & 4\\
\cline{1-2} 
\end{array}}{12pt}}$ to~\eqref{eq:alg_curv_tensor_1} and recalling that the
numbers $h_{(2,1)}$ and $h_{(2,2)}$ are 3 and 12, respectively, the
definitions given in~\eqref{eq:product_of_a_one_form_and_a_symmetric_2-tensor},~\eqref{eq:def_Kulkarni_Nomizu_product}
and~\eqref{eq:kappa_prolong_2} imply that $C = C^\kappa$. Substituting~\eqref{eq:prolong_1_speziell} this
gives~\eqref{eq:prolong_2_speziell}. In the same way~\eqref{eq:alg_curv_tensor_2}
implies \eqref{eq:prolong_3_speziell}.

For $(c)\Rightarrow (d)$: Applying~\eqref{eq:second_fundamental_form_of_bbP}
to  $S := S^\kappa$ we obtain that
\begin{equation}\label{eq:second_fundamental_form_of_bbP_im_spezialfall}
\hat\nabla_xS^\kappa|_{(p,1)}\;\;=\;\;\left ( \begin{array}{llll}
\nabla_x\kappa & - x\lrcorner \kappa^1& &\\
\nabla_x \kappa^1  & - x\lrcorner \kappa^2  & - x\lrcorner (g\owedge \kappa) & + 2\, \kappa\owedge x^\sharp\\
\nabla_{x}\kappa^2 & & + \ \ \ g\owedge \nabla_x\kappa & +\ \ \kappa^1\owedge x^\sharp 
\end{array} \right ).
 \end{equation}
Hence~\eqref{eq:prolong_1_speziell}-\eqref{eq:prolong_3_speziell} imply that $\hat\nabla_xS^\kappa|_{(p,1)} = 0$ for all $p\in M$ and $x\in T_pM$.

For $(d)\Rightarrow (a)$: we have to show that $\hat\nabla S^\kappa =0 $. Covariant constancy of $S^\kappa$ 
in the radial direction follows automatically from the structure of~\eqref{eq:zerlegung_von_kappa_dach} according to Lemma~\ref{le:parallel_in_the_radial_direction}.
Further, we have by assumption $\hat\nabla_x S^\kappa|_{(p,1)} = 0$ for all $p\in M$ and $x\in T_pM$. It remains to show that $\hat\nabla_x S^\kappa|_{(p,r)} = 0$ for all $r>0$. 
According to Lemma~\ref{le:parallel_in_the_horizontal_direction}, the tensorial difference $\Delta := \hat\nabla - \nabla$ of the two connections $\hat\nabla$ and $\nabla$  satisfies 
\begin{equation*}
x\lrcorner \Delta S^\kappa|_{(p,r)} \;\;=\;\;\left (  \begin{array}{lll}
 - r^2\, x\lrcorner \nabla\kappa& & \\
 - r^3\, x\lrcorner C^\kappa  & + \ 2\, r^3\, \kappa  \owedge x^\sharp&\\
&\quad \ \  r^4\, \kappa^1\owedge x^\sharp   &
\end{array} \right )
\end{equation*} 
Therefore, the scaling of the components of $x\lrcorner \Delta S^\kappa$ in the radial parameter $r$ is the same as for $S^\kappa$. Hence 
$\nabla_xS^\kappa|_{(p,r)} = - x\lrcorner \Delta S^\kappa|_{(p,r)}$ for all $p\in M$ and $r>0$ as soon as this holds for $r = 1$. Therefore $\hat\nabla_x S^\kappa|_{(p,r)} = 0$ for all $r>0$ as soon as this is true for $r=1$.
\end{proof}

\section{Prolongation of the Killing Equation}\label{se:prolongation}
We will show through direct calculations that the triple  $(\kappa,\kappa^1,\kappa^2)$ is closed 
under componentwise covariant derivative for a Killing tensor $\kappa$.  More precisely, we consider the following curvature expressions
\begin{align}\label{eq:def_F21}
F^1(\kappa;x_1,x_2,x_3,x_4) &\;\;:=\;\; \begin{array}{cc}  \frac{1}{2}\; R_{x_1,x_2}\kappa(x_3,x_4) 
-  \frac{1}{4}\; \cyclic_{34}\cyclic_{12} R_{x_3,x_1}\kappa(x_2,x_4),
\end{array}\\
\label{eq:def_F22}
F^2(\kappa;x_1,\cdots,x_5) &\;\;:=\;\;\scaleto{\rmP_{\scaleto{\begin{array}{|c|c|}
\cline{1-2}
2 & 3 \\
\cline{1-2}
4  & 5 \\
\cline{1-2} 
\end{array}}{12pt}}}{24pt} \left (
\begin{array}{c}
   \nabla_{x_4}\big (R_{x_1,x_5}\cdot \kappa(x_2,x_3) + 2\, R_{x_2,x_5} \cdot \kappa(x_1,x_3)\big )\\
 + R_{x_1,x_5}\cdot \nabla_{x_4}\kappa(x_2,x_3) + R_{x_2,x_5}  \cdot \nabla_{x_4}\kappa(x_1,x_3)
\end{array}\right )
\end{align}
(where $\cyclic$ denote the cyclic sum). Obviously these expressions depend tensorial on $\kappa$ and $(\kappa,\nabla \kappa)$,
respectively. Then we will see that the equations of the prolongation of the
Killing equation
\begin{align}\label{eq:prolong_1}
\nabla_{x_1}\kappa (x_2,x_3)& \;\;=\;\; \kappa^1(x_1,x_2,x_3),\\
\label{eq:prolong_2}
\nabla_{x_1}\kappa^1(x_2,x_3,x_4) & \;\;=\;\; \kappa^2(x_1,x_2,x_3,x_4) + F^1(\kappa;x_1,x_2,x_3,x_4),\\
\label{eq:prolong_3}
\nabla_{x_1}\kappa^2(x_2,x_3,x_4,x_5) & \;\;=\;\; F^2(\kappa;x_1,x_2,x_3,x_4,x_5)
\end{align}
hold for every Killing tensor $\kappa$. In particular,
the triple $(\kappa,\kappa^1,\kappa^2)$ is parallel with respect to a linear
connection which is immediately clear
from~\eqref{eq:prolong_1}-\eqref{eq:prolong_3}, the Killing connection.
We already know from Lemma~\ref{le:branching_rules} that this triple can be seen as an algebraic curvature tensor 
on the direct sum $TM \oplus \R$. Because $\frac{1}{3}\binom{n+2}{2}\binom{n+1}{2}$  is the dimension of the linear space of algebraic curvature tensors in dimension $n+1$, we recover the well known result that this number 
is an upper bound for the dimension of the space of symmetric Killing 2-tensors on any Riemannian manifold (cf.~\cite[Theorem~4.3]{Th}).

The prolongation of the Killing equation for abitrary symmetric tensors 
using the adjoint Young symmetrizers and projectors is given
in~\cite{HTY}. However in comparison with~\eqref{eq:prolong_1}-\eqref{eq:prolong_3} that approach has the disadvantage of the appearance of an additional Young
symmetrizer (see~\cite[(7)-(9)]{HTY}) complicating the formulas unnecessarily.

The first equation of the prolongation~\eqref{eq:prolong_1} is a consequence
of Example~\ref{ex:Young_symmetrizer}~(b). The other two are derived as follows.

\subsection{The second equation of the prolongation}
We consider the curvature expression $F^{1}(\kappa)$ defined in~\eqref{eq:def_F21} and show that the second equation of the prolongation~\eqref{eq:prolong_2} of the Killing equation  holds.
First note that $x_1\mapsto F^1(\kappa;x_1)$ defines a 1-form on $M$ with values in the vector bundle $\bbS_{\scaleto{\begin{array}{|c|c|}
\cline{1-2}
3 & 4 \\
\cline{1-2}
2 & \multicolumn{1}{c}{} \\
 \cline{1-1}
\end{array}}{12pt}} T^*M$
and that $F^1(\kappa)$ is a tensor in $\kappa$. Further, recall that the curvature endomorphism $R_{x,y}$ acts on $\kappa$ via algebraic
derivation $R_{x,y}\cdot \kappa$. Therefore, we could allow $R$
to be an arbitrary algebraic curvature tensor to obtain a tensorial bilinear expression $F^1(\kappa,R)$.
Then we have $F^1(\kappa,R) = \tilde F^1(R \cdot \kappa)$ for some tensorial expression in the curvature tensor applied to $\kappa$.

In oder to establish~\eqref{eq:def_F21}, we need the following intermediate result:

\bigskip
\begin{lemma}
  Let $\kappa$ be Killing. Then
 \begin{eqnarray}\label{eq:Killing_alternativ_*}\cyclic_{12}\nabla^2_{x_1,x_2}\kappa(x_3,x_4) &=& \cyclic_{34} \nabla^2_{x_3,x_4}\kappa(x_1,x_2)
+ 2\, \cyclic_{34}\cyclic_{12} R_{x_3,x_1}\kappa(x_2,x_4)
\end{eqnarray}.
\end{lemma}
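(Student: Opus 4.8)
The plan is to obtain the identity by differentiating the polarised Killing equation once and then commuting the two resulting covariant derivatives by means of the Ricci identity. First I would record the first–order form of the Killing condition: by Example~\ref{ex:Young_symmetrizer}~(b) the vanishing of the completely symmetric part of $\nabla\kappa$ is, since $\kappa$ is symmetric, the cyclic identity
\[
\nabla_{x_1}\kappa(x_2,x_3)+\nabla_{x_2}\kappa(x_3,x_1)+\nabla_{x_3}\kappa(x_1,x_2)\;=\;0 ,
\]
valid for all vector fields. This is a tensorial identity, so I may differentiate it covariantly; evaluating at a point $p$ and extending $x_1,\dots,x_4$ to fields that are parallel at $p$ removes all first–order correction terms and yields the second–order relation
\[
\nabla^2_{y,x_1}\kappa(x_2,x_3)+\nabla^2_{y,x_2}\kappa(x_3,x_1)+\nabla^2_{y,x_3}\kappa(x_1,x_2)\;=\;0
\]
for every $y,x_1,x_2,x_3$.

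I would then specialise this relation four times. Writing it for the triple $(x_2,x_3,x_4)$ with outer derivative $y=x_1$, and for $(x_1,x_3,x_4)$ with outer derivative $y=x_2$, and summing the two, expresses $\cyclic_{12}\nabla^2_{x_1,x_2}\kappa(x_3,x_4)$ as minus a sum of four ``mixed'' second derivatives, namely those in which one of $x_1,x_2$ occupies a differentiation slot and the other a $\kappa$–slot. Carrying out the same manoeuvre in the indices $3,4$ — outer derivatives $y=x_3$ and $y=x_4$ applied to the triple containing $x_1,x_2$ — expresses $\cyclic_{34}\nabla^2_{x_3,x_4}\kappa(x_1,x_2)$ as minus an analogous sum of four mixed terms. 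Subtracting the second expression from the first is the heart of the argument.

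The decisive point is that, after using the symmetry of $\kappa$ to align the pairs of $\kappa$–arguments, the eight mixed second derivatives match up in four pairs that differ only in the order of their two differentiation indices. Each such pair is a commutator $\nabla^2_{x_i,x_j}\kappa-\nabla^2_{x_j,x_i}\kappa$, and the Ricci identity $\nabla^2_{x,y}\kappa-\nabla^2_{y,x}\kappa=R_{x,y}\cdot\kappa$ converts it into a single curvature term. The four terms so produced are exactly
\[
R_{x_3,x_1}\kappa(x_2,x_4)+R_{x_3,x_2}\kappa(x_1,x_4)+R_{x_4,x_1}\kappa(x_2,x_3)+R_{x_4,x_2}\kappa(x_1,x_3),
\]
i.e. the double cyclic sum $\cyclic_{34}\cyclic_{12}R_{x_3,x_1}\kappa(x_2,x_4)$, the stated coefficient $2$ being determined by the normalisation convention for the cyclic symmetrisations.

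I expect the only genuine difficulty to be the combinatorial bookkeeping in this last step: one must verify that, once the $\kappa$–arguments are matched via symmetry, \emph{every} non-commuted contribution among the eight mixed derivatives cancels, so that only the four commutators survive. Setting up the correct pairing is what makes the Ricci identity applicable; after that the identification with $\cyclic_{34}\cyclic_{12}R_{x_3,x_1}\kappa(x_2,x_4)$ and the fixing of the numerical factor are immediate.
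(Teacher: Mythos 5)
Your argument is correct and is essentially the paper's own proof in a more symmetric arrangement: the paper likewise polarises the Killing condition to the cyclic identity, differentiates it, and commutes derivative slots with the Ricci identity, merely organising the computation as two chains of substitutions for $\nabla^2_{x_1,x_2}\kappa(x_3,x_4)$ (Killing, then Ricci, then Killing again, and separately Ricci, Killing, Ricci) which are then added, whereas you subtract two symmetric four-term expressions and pair the eight mixed derivatives into four commutators directly. The one soft spot is your final remark about the factor $2$. With the plain (unnormalised) cyclic sums that the paper uses throughout --- the convention under which your identification of the four curvature terms with $\cyclic_{34}\cyclic_{12}R_{x_3,x_1}\kappa(x_2,x_4)$ is literally true --- your computation produces this double sum with coefficient $1$, not $2$; and in fact the paper's own proof, when the two expressions are actually added, also yields coefficient $1$, as does the application of the lemma in the proof of Proposition~\ref{p:prolong_4}, where the substitution gives $3\,\cyclic_{12}\nabla^2_{x_1,x_2}\kappa(x_3,x_4) - 3\,\bigl(\cyclic_{12}\cyclic_{34}R_{x_3,x_1}\kappa(x_2,x_4)\bigr)$ rather than $-6$. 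The stated coefficient $2$ is consistent only if the $\cyclic$'s are read as idempotent (averaged) symmetrisations, since then the left-hand side and the first right-hand term each rescale by $\tfrac12$ while the double symmetrisation rescales by $\tfrac14$. So your appeal to ``normalisation convention'' is right in substance, but as written it is having it both ways: in any single fixed convention the coefficient you derive is $1$ (plain sums) or $2$ (averages), and you should state explicitly which one you use and note that under the plain-sum convention the discrepancy lies in the paper's statement, not in your derivation.
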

\begin{proof} On the one hand,
\begin{eqnarray*}
\nabla^2_{x_1,x_2}\kappa(x_3,x_4) &=& - \nabla^2_{x_1,x_3}\kappa(x_2,x_4) - \nabla^2_{x_1,x_4}\kappa(x_2,x_3)\\
                     &=&   \left \lbrace\begin{array}{c} - \big (\nabla^2_{x_3,x_1}\kappa(x_2,x_4) + \nabla^2_{x_4,x_1}\kappa(x_2,x_3) \big )\\
                                                   + R_{x_3,x_1}\kappa(x_2,x_4) +  R_{x_4,x_1}\kappa(x_2,x_3) \end{array} \right \rbrace\\
                     &=&  \left \lbrace\begin{array}{cccc} 
                                  \nabla^2_{x_3,x_4}\kappa(x_1,x_2) &+ \nabla^2_{x_3,x_2}\kappa(x_1,x_4) &+ \nabla^2_{x_4,x_3}\kappa(x_1,x_2) &+ \nabla^2_{x_4,x_2}\kappa(x_1,x_3)\\
                                 + R_{x_3,x_1}\kappa(x_2,x_4) & &+ R_{x_4,x_1}\kappa(x_2,x_3) & \end{array} \right \rbrace.
\end{eqnarray*}
On the other hand,
\begin{eqnarray*}
\nabla^2_{x_1,x_2}\kappa(x_3,x_4) -  R_{x_1,x_2}\kappa(x_3,x_4) &=& \nabla^2_{x_2,x_1}\kappa(x_3,x_4)\\
&\;\;=\;\;& \left .\begin{array}{c}  - \nabla^2_{x_2,x_3}\kappa(x_1,x_4) - \nabla^2_{x_2,x_4}\kappa(x_1,x_3)
                                 \end{array} \right .\\
&=&  \left \lbrace \begin{array}{c} - \nabla^2_{x_3,x_2}\kappa(x_1,x_4) - \nabla^2_{x_4,x_2}\kappa(x_1,x_3)\\
                             + R_{x_3,x_2}\kappa(x_1,x_4) + R_{x_4,x_2}\kappa(x_1,x_3)\end{array} \right \rbrace
\end{eqnarray*}
The result follows by adding the two expressions.
\end{proof}


\bigskip
\begin{proposition} \label{p:prolong_4}
If $\kappa$ is Killing, then~\eqref{eq:prolong_2} holds.
\end{proposition}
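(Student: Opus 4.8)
The plan is to verify~\eqref{eq:prolong_2} by isolating the part of the Hessian $\nabla^2\kappa$ that fails to be of symmetry type $(2,2)$. Since $\kappa$ is Killing, $\nabla\kappa$ already lies in the image of $\rmP_{(2,1)}$ by Example~\ref{ex:Young_symmetrizer}~(b), so~\eqref{eq:prolong_1} gives $\kappa^1=\nabla\kappa$ and the left-hand side of~\eqref{eq:prolong_2} is $\nabla_{x_1}\kappa^1(x_2,x_3,x_4)=\nabla^2_{x_1,x_2}\kappa(x_3,x_4)$. As $\kappa^2=\rmP_{(2,2)}\nabla^2_{x_1,x_2}\kappa(x_3,x_4)$ by~\eqref{eq:kappa_prolong_2}, the claim~\eqref{eq:prolong_2} is equivalent to
\[
(\Id-\rmP_{(2,2)})\,\nabla^2_{x_1,x_2}\kappa(x_3,x_4)\;=\;F^1(\kappa;x_1,x_2,x_3,x_4),
\]
so it remains to identify this complement with the curvature expression~\eqref{eq:def_F21}.

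First I would split $\nabla^2_{x_1,x_2}\kappa(x_3,x_4)$, which is symmetric in $(x_3,x_4)$, into its symmetric and antisymmetric parts in $(x_1,x_2)$. By the Ricci identity $\nabla^2_{x_1,x_2}\kappa-\nabla^2_{x_2,x_1}\kappa=R_{x_1,x_2}\kappa$ (already used in the preceding lemma), the antisymmetric part equals $\tfrac12\,R_{x_1,x_2}\kappa(x_3,x_4)$. This piece lies in $\Lambda^2 T^*M\otimes\Sym^2 T^*M$, whose Littlewood--Richardson decomposition $\bbS_{(3,1)}T^*M\oplus\bbS_{(2,1,1)}T^*M$ contains no summand of type $(2,2)$; hence it is annihilated by $\rmP_{(2,2)}$ and survives unchanged in $(\Id-\rmP_{(2,2)})\nabla^2\kappa$, producing the first term $\tfrac12\,R_{x_1,x_2}\kappa(x_3,x_4)$ of~\eqref{eq:def_F21}.

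Next I would treat the symmetric part $S:=\tfrac12\bigl(\nabla^2_{x_1,x_2}+\nabla^2_{x_2,x_1}\bigr)\kappa(x_3,x_4)$, which lies in $\Sym^2 T^*M\otimes\Sym^2 T^*M=\Sym^4 T^*M\oplus\bbS_{(3,1)}T^*M\oplus\bbS_{(2,2)}T^*M$. Differentiating the Killing equation yields $\cyclic_{234}\nabla^2_{x_1,x_2}\kappa(x_3,x_4)=0$, so the totally symmetric $\Sym^4$-component of $S$ vanishes; consequently the only non-$(2,2)$ content of $S$ is its $\bbS_{(3,1)}$-component, i.e. the part antisymmetric under exchanging the pair $(x_1,x_2)$ with $(x_3,x_4)$. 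Thus $\rmP_{(2,2)}S$ is the pair-symmetric part of $S$, whence $(\Id-\rmP_{(2,2)})S=\tfrac12\bigl(S(x_1,x_2,x_3,x_4)-S(x_3,x_4,x_1,x_2)\bigr)$. The just-proven identity~\eqref{eq:Killing_alternativ_*} expresses exactly this pair-exchange defect through $\cyclic_{34}\cyclic_{12}R_{x_3,x_1}\kappa(x_2,x_4)$, which delivers the remaining, second term of~\eqref{eq:def_F21}. Adding the two contributions gives $(\Id-\rmP_{(2,2)})\nabla^2\kappa=F^1$, as required.

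The main obstacle will be the precise bookkeeping of the numerical coefficients. One must reconcile the Young-projector normalisations ($h_{(2,1)}=3$, $h_{(2,2)}=12$) with the factors $\tfrac12$ generated both by the symmetric/antisymmetric splitting in $(x_1,x_2)$ and by the $(3,1)$-extraction from~\eqref{eq:Killing_alternativ_*}, while carefully tracking the sign of the curvature-derivation action~\eqref{eq:derivation} and of the Ricci identity. Since the two summands of $F^1$ transform oppositely under a change of curvature sign convention, I would finally cross-check the collected expression against the special case of constant sectional curvature, where~\eqref{eq:prolong_2} must reduce to the relation~\eqref{eq:prolong_2_speziell} obtained from parallelity on the cone; this pins down the coefficients unambiguously and confirms the agreement with~\eqref{eq:def_F21}.
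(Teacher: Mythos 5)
Your proposal is correct in substance and takes a genuinely different route from the paper. The paper's own proof attacks $\rmS_{(2,2)}\nabla^2\kappa$ head-on: it factors the $(2,2)$-symmetrizer through a $(2,1)$-symmetrizer, absorbs terms using the differentiated Killing identity and Example~\ref{ex:Young_symmetrizer}~(b), and then feeds in \eqref{eq:Killing_alternativ_*} and the Ricci identity. You instead compute $\rmP_{(2,2)}$ structurally: the part of $\nabla^2\kappa$ antisymmetric in $(x_1,x_2)$ lies in $\Lambda^2T^*M\otimes\Sym^2T^*M=\bbS_{(3,1)}\oplus\bbS_{(2,1,1)}$ and is therefore annihilated; on the symmetric part the differentiated Killing identity kills the $\Sym^4$-component, and the plethysms $\Sym^2(\Sym^2 V^*)=\Sym^4V^*\oplus\bbS_{(2,2)}V^*$, $\Lambda^2(\Sym^2V^*)=\bbS_{(3,1)}V^*$ identify $\rmP_{(2,2)}$ with pair-symmetrization. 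This replaces the sixteen-term symmetrizer bookkeeping by two clean representation-theoretic facts, at the price of one point you should make explicit: that the Young projector $\rmP_{(2,2)}$ \emph{restricted to} $\Sym^2\otimes\Sym^2$ really is the equivariant isotypic projection. This is true ($\rmP_{(2,2)}$ is idempotent, annihilates all isotypic components of other symmetry types, and $\Sym^2\otimes\Sym^2$ contains the type $(2,2)$ with multiplicity one, the unique copy being $\bbS_{(2,2)}V^*$ itself), but it is not automatic from the definitions and deserves a line of proof. Both arguments ultimately rest on the same key Lemma, so the routes are cousins rather than strangers.

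One concrete warning on the coefficient bookkeeping you rightly flag as the main obstacle. Re-adding the two displays in the proof of the Lemma (with $\cyclic$ the two-term sum) gives the identity with coefficient $1$, not $2$: $\cyclic_{12}\nabla^2_{x_1,x_2}\kappa(x_3,x_4)=\cyclic_{34}\nabla^2_{x_3,x_4}\kappa(x_1,x_2)+\cyclic_{12}\cyclic_{34}R_{x_3,x_1}\kappa(x_2,x_4)$. Running your pipeline with this corrected version yields
\begin{equation*}
\bigl(\Id-\rmP_{(2,2)}\bigr)\,\nabla^2_{x_1,x_2}\kappa(x_3,x_4)\;=\;\tfrac{1}{2}\,R_{x_1,x_2}\kappa(x_3,x_4)\;+\;\tfrac{1}{4}\,\cyclic_{12}\cyclic_{34}\,R_{x_3,x_1}\kappa(x_2,x_4),
\end{equation*}
with relative sign $+$ between the two terms, whereas \eqref{eq:def_F21} as printed has a relative $-$. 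Since the relative sign is insensitive to curvature-sign conventions, your proposed cross-check on the round sphere is not a formality: carrying it out with $\kappa$ induced by a constant symmetrized curvature tensor $S$ of $\R^{n+1}$, where one computes $\nabla^2_{x_1,x_2}\kappa(x_3,x_4)=S(x_1,x_2,x_3,x_4)-2\langle x_1,x_2\rangle\kappa(x_3,x_4)+\langle x_1,x_3\rangle\kappa(x_2,x_4)+\langle x_1,x_4\rangle\kappa(x_2,x_3)$, confirms the $+$ sign and matches \eqref{eq:prolong_2_speziell} and Lemma~\ref{le:curvature_terms_for_S1} exactly. So your final step will expose a sign/factor typo in the printed \eqref{eq:def_F21} and \eqref{eq:Killing_alternativ_*} (the paper's later uses of $F^1$ are consistent with the corrected version) rather than literally ``confirm agreement''; with that correction your argument is complete.
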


\begin{proof}
\begin{eqnarray*}
\rmS_{\scaleto{\begin{array}{|c|c|}
\cline{1-2}
1 & 2 \\
\cline{1-2}
3 & 4 \\
\cline{1-2}
\end{array}}{12pt}}\nabla^2_{x_1,x_2}\kappa(x_3,x_4) &\;\;=\;\;& \cyclic_{12}\; \rmS_{\scaleto{\begin{array}{|c|c|}
\cline{1-2}
4 & 3 \\
\cline{1-2}
2 & \multicolumn{1}{c}{\;\;\;} \\
\cline{1-1}
\end{array}}{12pt}} \nabla^2_{x_1,x_2}\kappa(x_3,x_4) - \nabla^2_{x_3,x_2}\kappa(x_1,x_4)\\
& \stackrel{\eqref{eq:def_Killing}}{=} & \cyclic_{12} \rmS_{\scaleto{\begin{array}{|c|c|}
\cline{1-2}
4 & 3 \\
\cline{1-2}
2 & \multicolumn{1}{c}{\;\;\;} \\
\cline{1-1}
\end{array}}{12pt}}\big (\nabla^2_{x_1,x_2}\kappa(x_3,x_4) + \nabla^2_{x_3,x_4}\kappa(x_1,x_2) +  \nabla^2_{x_3,x_1}\kappa(x_2,x_4)\big )\\
 & \stackrel{Example~\ref{ex:Young_symmetrizer}~(b)}{=} &\cyclic_{12}\big ( 3\, \nabla^2_{x_1,x_2}\kappa(x_3,x_4) + \rmS_{\scaleto{\begin{array}{|c|c|}
\cline{1-2}
4 & 3 \\
\cline{1-2}
2 & \multicolumn{1}{c}{\;\;\;} \\
\cline{1-1}
\end{array}}{12pt}}\nabla^2_{x_3,x_4}\kappa(x_1,x_2)\big ). 
\end{eqnarray*}
Further, \begin{eqnarray*}
\cyclic_{12}\; \rmS_{\scaleto{\begin{array}{|c|c|}
\cline{1-2}
4 & 3 \\
\cline{1-2}
2 & \multicolumn{1}{c}{\;\;\;} \\
\cline{1-1}
\end{array}}{12pt}} \nabla^2_{x_3,x_4}\kappa(x_1,x_2)
&\;\;=\;\;& \cyclic_{12}\cyclic_{34} \big (\nabla^2_{x_3,x_4}\kappa(x_1,x_2) - \nabla^2_{x_3,x_2}\kappa(x_1,x_4) \big )\\
 &\stackrel{\eqref{eq:def_Killing}}{=} & 3\; \cyclic_{34} \nabla^2_{x_3,x_4}\kappa(x_1,x_2)\\
& \stackrel{\eqref{eq:Killing_alternativ_*}}{=} & 3\; \cyclic_{12} \; \nabla^2_{x_1,x_2}\kappa(x_3,x_4) - 3\big (\cyclic_{12}\cyclic_{34} R_{x_3,x_1}\kappa(x_2,x_4)\big ).
\end{eqnarray*} 
Therefore
\begin{eqnarray*}
\rmP_{\scaleto{\begin{array}{|c|c|}
\cline{1-2}
1 & 2 \\
\cline{1-2}
3 & 4 \\
\cline{1-2}
\end{array}}{12pt}} \nabla^2_{x_1,x_2}\kappa (x_3,x_4) - \frac{1}{2} \cyclic_{12}\nabla^2_{x_1,x_2}\kappa(x_3,x_4) & =&
                                                       \frac{1}{4}\; \cyclic_{12}\cyclic_{34} R_{x_3,x_1}\kappa(x_2,x_4).
\end{eqnarray*}
Using the Ricci-identity,~\eqref{eq:prolong_2} follows.
\end{proof}

From the previous we obtain a Weitzenböck formula. For this recall the natural action of the curvature tensor on a covariant symmetric 2-tensor $\kappa$,
\begin{equation}\label{eq:def_of_R*R}
q(R) \kappa(x_3,x_4)\;:=\;- \sum_{i=1}^nR_{x_3,e_i}\cdot \kappa(x_4,e_i) + R_{x_4,e_i}\cdot \kappa(x_3,e_i).
\end{equation}
This is the zeroth-order term appearing in the definition of the Lichnerowicz
Laplacian on symmetric 2-tensors 
\begin{equation}\label{eq:Lichnerowicz_Laplacian}
\Delta \kappa\;:=\;\nabla^*\nabla \kappa + q(R)\kappa
\end{equation}
(cf.~\cite[1.143]{Be}). Further, if $\kappa$ is Killing, then the differential
$\d\, \trace \, \kappa$ of
its trace and the divergence $\delta\, \kappa$ are related by
\begin{equation}\label{eq:divergenz_versus_differential_der_spur}
\d\, \trace \, \kappa\;\;=\;\;2\,\delta\,\kappa.
\end{equation}

\bigskip
\begin{corollary}
Let $\kappa$ be a symmetric Killing 2-tensor. Then \begin{eqnarray}\label{eq:Wb}
 \nabla^*\nabla \kappa &\;\;=\;\;& q(R)\kappa - \nabla^2\trace \, \kappa .
\end{eqnarray}
\end{corollary}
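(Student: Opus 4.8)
The plan is to compute a single contracted Hessian of $\kappa$ in two ways and then equate the outcomes. Fix $p\in M$ and choose a local orthonormal frame $\{e_i\}$ that is $\nabla$-parallel at $p$, so that contractions commute with $\nabla$ at $p$. Consider the symmetric $2$-tensor
\[
\Phi(x,y)\;:=\;\sum_i \nabla^2_{e_i,x}\kappa(e_i,y)\;+\;\sum_i \nabla^2_{e_i,y}\kappa(e_i,x),
\]
and recall the convention $\nabla^*\nabla\kappa=-\sum_i\nabla^2_{e_i,e_i}\kappa$. Everything comes down to evaluating $\Phi$ twice.

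First I would use the Killing equation. Differentiating the identity $\nabla_a\kappa(b,c)+\nabla_b\kappa(c,a)+\nabla_c\kappa(a,b)=0$ in a further direction $w$ gives the tensorial relation $\nabla^2_{w,a}\kappa(b,c)+\nabla^2_{w,b}\kappa(c,a)+\nabla^2_{w,c}\kappa(a,b)=0$. Specialising $w=a=e_i$, $b=x$, $c=y$, invoking the symmetry of $\kappa$ and summing over $i$, the three summands become $\sum_i\nabla^2_{e_i,e_i}\kappa(x,y)$, $\sum_i\nabla^2_{e_i,x}\kappa(e_i,y)$ and $\sum_i\nabla^2_{e_i,y}\kappa(e_i,x)$, so that $-\nabla^*\nabla\kappa(x,y)+\Phi(x,y)=0$, i.e. $\Phi=\nabla^*\nabla\kappa$.

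Second I would commute the two covariant derivatives in each summand of $\Phi$. By the Ricci identity $\nabla^2_{e_i,x}\kappa=\nabla^2_{x,e_i}\kappa+R_{e_i,x}\cdot\kappa$, where $\cdot$ is the derivation action~\eqref{eq:derivation}. Since the frame is parallel at $p$, the contraction of the commuted term is $\sum_i\nabla^2_{x,e_i}\kappa(e_i,y)=\nabla_x\big(\sum_i\nabla_{e_i}\kappa(e_i,\cdot)\big)(y)=-(\nabla_x\delta\kappa)(y)$, which by the Killing divergence relation~\eqref{eq:divergenz_versus_differential_der_spur} equals $-\tfrac12\nabla^2_{x,y}\trace\,\kappa$. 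Adding the two symmetric copies and using that the Hessian of a function is symmetric, the commuted part contributes $-\nabla^2\trace\,\kappa$, while the curvature terms assemble into $\sum_i\big[(R_{e_i,x}\cdot\kappa)(e_i,y)+(R_{e_i,y}\cdot\kappa)(e_i,x)\big]$. Using $R_{e_i,x}=-R_{x,e_i}$ and the symmetry of $\kappa$, this last sum is precisely $q(R)\kappa(x,y)$ by~\eqref{eq:def_of_R*R}. Hence $\Phi=q(R)\kappa-\nabla^2\trace\,\kappa$.

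Equating the two expressions for $\Phi$ gives the asserted formula $\nabla^*\nabla\kappa=q(R)\kappa-\nabla^2\trace\,\kappa$; this is nothing but the trace over the first pair of indices of~\eqref{eq:Killing_alternativ_*} (equivalently of the prolongation equation~\eqref{eq:prolong_2}). I expect the only delicate point to be the curvature bookkeeping in the second evaluation: matching the contracted $R\cdot\kappa$ terms to the definition of $q(R)$ requires keeping careful track of the derivation action, the skew-symmetry of $R_{\cdot,\cdot}$ and the symmetry of $\kappa$. The remaining manipulations---differentiating the Killing equation twice and interchanging contraction with $\nabla$---are routine.
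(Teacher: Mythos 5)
Your argument is correct, and it takes a genuinely different route from the paper's. The paper proves \eqref{eq:Wb} by tracing the Young-projected tensor $\rmP\,\nabla^2\kappa$ of shape $(2,2)$ (i.e.\ $\kappa^2$ as in \eqref{eq:kappa_prolong_2}, computed via the prolongation equation \eqref{eq:prolong_2}) in two different index pairs: the trace over $x_3,x_4$ yields $-\nabla^*\nabla\kappa+\frac12 q(R)\kappa$, the trace over $x_1,x_2$ yields $\nabla^2\trace\,\kappa-\frac12 q(R)\kappa$, and the two are equated by the pair symmetry of a symmetrized curvature tensor. You bypass the prolongation and the Young-symmetrizer machinery entirely: your first evaluation of $\Phi$ is just the contraction of the once-differentiated Killing identity, and your second uses the Ricci identity together with the contracted Killing relation \eqref{eq:divergenz_versus_differential_der_spur}. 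Your bookkeeping is consistent with the paper's conventions ($\nabla^2_{u,v}-\nabla^2_{v,u}=R_{u,v}\cdot$ with the derivation action \eqref{eq:derivation}, $\delta\kappa=-\textstyle\sum_i e_i\lrcorner\nabla_{e_i}\kappa$), and the key small point you flag is handled correctly: $R_{x,e_i}\cdot\kappa$ is again symmetric, which legitimizes swapping its arguments when matching the contracted curvature terms against \eqref{eq:def_of_R*R}. What the paper's route buys is structural coherence---the identity appears as an instance of the exchange symmetries of the prolongation variable $\kappa^2$, which the paper has already built; what your route buys is self-containedness and economy, needing only the raw Killing equation, the Ricci identity, and \eqref{eq:divergenz_versus_differential_der_spur} (itself just the single contraction of the Killing identity, so your use of it is legitimate). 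One caveat on your closing parenthetical: tracing \eqref{eq:Killing_alternativ_*} \emph{as printed} over the first index pair produces $2\,q(R)\kappa$ rather than $q(R)\kappa$; in fact, adding the two displayed expressions in the paper's proof of that lemma gives the double cyclic curvature sum with coefficient $1$, not $2$, so the printed factor $2$ in \eqref{eq:Killing_alternativ_*} appears to be a typo, and your formula agrees with the corrected identity (and with \eqref{eq:prolong_2}, whose traces reproduce exactly your two evaluations of $\Phi$). Your self-contained derivation is unaffected by this discrepancy.
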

\begin{proof}
On the one hand, taking the trace on~\eqref{eq:prolong_2} with respect to $x_3,\, x_4$ and  $x_1,\, x_2$ gives
\begin{eqnarray*}
\trace_{3,4} \big (\rmP_{\scaleto{\begin{array}{|c|c|}
\cline{1-2}
1 & 2 \\
\cline{1-2}
3  & 4 \\
\cline{1-2} 
\end{array}}{12pt}} \nabla^2\kappa (x_1,x_2,x_3,x_4)\big ) &=& - \nabla^*\nabla \kappa(x_1,x_2) - \frac{1}{4}\;\sum_{i=1}^n 
\left \lbrace\begin{array}{c} 
2\, \underbrace{R_{e_i,e_i}\kappa(x_3,x_4)}_{=0}\\
+ R_{x_1,e_i}\kappa(e_2,x_i) + R_{x_1,e_i}\kappa(e_i,x_2)\\ +  R_{x_2,e_i}\kappa(e_i,x_1) + R_{x_2,e_i}\kappa(e_i,x_1)  \end{array} \right \rbrace\\
&=&  - \nabla^*\nabla \kappa(x_1,x_2) + \frac{1}{2} q(R)\kappa(x_1,x_2)\;,
\end{eqnarray*}
and
\begin{eqnarray*}
\trace_{1,2} \big (\rmP_{\scaleto{\begin{array}{|c|c|}
\cline{1-2}
1 & 2 \\
\cline{1-2}
3  & 4 \\
\cline{1-2} 
\end{array}}{12pt}}\nabla^2\kappa (x_1,x_2,x_3,x_4)\big ) &=& \nabla^2_{x_3,x_4 }\trace \, \kappa
+ \frac{1}{4}\;\left \lbrace\begin{array}{c} 
2\, \underbrace{R_{x_3,x_4}\kappa(e_i,e_i)}_{=0}\\
+ R_{x_3,e_i}\kappa(x_4,e_i) + R_{x_4,e_i}\kappa(x_3,e_i)\\  + R_{x_3,e_i}\kappa(x_4,e_i) +
           R_{x_4,e_i}\kappa(x_3,e_i)\end{array} \right \rbrace\\
&=& \nabla^2_{x_3,x_4 }\trace \, \kappa - \frac{1}{2} q(R)\kappa(x_3,x_4).
\end{eqnarray*}
On the other hand, $\trace_{3,4} \big ( \rmP_{\scaleto{\begin{array}{|c|c|}
\cline{1-2}
1 & 2 \\
\cline{1-2}
3  & 4 \\
\cline{1-2} 
\end{array}}{12pt}}\nabla^2\kappa \big )(u,v) = \trace_{1,2} \big ( \rmP_{\scaleto{\begin{array}{|c|c|}
\cline{1-2}
1 & 2 \\
\cline{1-2}
3  & 4 \\
\cline{1-2} 
\end{array}}{12pt}}\nabla^2\kappa \big )(u,v)$ by the symmetries of a curvature tensor.
\end{proof}

\subsection{The third equation of the prolongation}
We will now consider the term $F^{2}(\kappa)$ defined in~\eqref{eq:def_F22} and show that~\eqref{eq:prolong_3} holds.

By definition, $x_1\mapsto F^2(\kappa;x_1)$ is a 1-form on $M$ with values in
the vector bundle $\bbS_{\scaleto{\begin{array}{|c|c|c|}
\cline{1-2}
2 & 4 \\
\cline{1-2}
3 & 5 \\
\cline{1-2}
\end{array}}{12pt}}T^*M$. The expression $F^2(\kappa)$ depends
linearly on $\kappa$. If we allow $R$ to be an arbitrary algebraic curvature
tensor, then we obtain a bilinear expression $F^2(\kappa,R)$. Via this
interpretation we have $F^2(\kappa,R) = \tilde
F^2(R\cdot\kappa,R\cdot\nabla\kappa)$ for some expression $\tilde F$ which depends linearly on the pair
$(R\cdot\kappa,R\cdot\nabla\kappa)$.
It is also clear from~\eqref{eq:def_F22} that $F^2(\kappa)$ is a (linear) tensor in 
$(\kappa,\nabla\kappa)$ and that $F^2(\kappa,R)$ is a bilinear tensor 
in the pairs $(\kappa,\nabla\kappa)$ and $(R,\nabla R)$. More precisely:

\bigskip
\begin{lemma}\label{le:ausmultiplizieren}
Let $M$ be Riemannian manifold $M$ with Levi Civita connection $\nabla$. Let
$\kappa$ and $R$ be a symmetric Killing 2-tensor and an algebraic curvature tensor on $M$. 
We have
\begin{equation}\label{eq:F22_ausmultipliziert}
F^2(\kappa,R;x_1,\ldots,x_5) \;\;=\;\;  \scaleto{\rmP_{\scaleto{\begin{array}{|c|c|}
\cline{1-2}
2 & 3 \\
\cline{1-2}
4  & 5 \\
\cline{1-2} 
\end{array}}{12pt}}}{24pt}\left (
\begin{array}{c}
   \nabla_{x_4} R_{x_1,x_5}\cdot\kappa(x_2,x_3) + 2\, \nabla_{x_4}R_{x_2,x_5}\cdot \kappa(x_1,x_3)\big )\\
 + 2\, R_{x_1,x_5}\cdot\nabla_{x_4}\kappa(x_2,x_3) + 3\, R_{x_2,x_5}\cdot\nabla_{x_4}\kappa(x_1,x_3)\\
-  \nabla_{R_{x_1,x_5} x_4}\cdot\kappa(x_2,x_3) - 2 \nabla_{R_{x_2,x_5}x_4}\cdot\kappa(x_1,x_3)
\end{array}\right ).
\end{equation}
\end{lemma}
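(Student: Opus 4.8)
The plan is to expand the covariant derivative $\nabla_{x_4}$ occurring in the definition~\eqref{eq:def_F22} of $F^2$ by the Leibniz rule, keeping careful track of how the algebraic derivation~\eqref{eq:derivation} by the curvature endomorphism interacts with differentiation. I would work throughout with the bilinear expression $F^2(\kappa,R)$ in which $R$ is treated as an arbitrary algebraic curvature tensor, so that the asserted identity becomes a purely pointwise (tensorial) statement. It then suffices to verify it at a fixed $p\in M$ with $x_1,\dots,x_5$ extended to vector fields that are parallel at $p$; this kills all Christoffel corrections from $\nabla_{x_4}x_i$ and isolates the genuine derivatives of $R$ and of $\kappa$.

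First I would treat the two summands inside $\nabla_{x_4}(\,\cdots)$ separately. For $R_{x_1,x_5}\cdot\kappa$ the product rule yields $(\nabla_{x_4}R)_{x_1,x_5}\cdot\kappa + R_{x_1,x_5}\cdot\nabla_{x_4}\kappa$, and similarly for the term carrying the factor $2$; the first halves produce precisely the curvature-derivative contributions $(\nabla_{x_4}R)_{x_1,x_5}\cdot\kappa$ and $2\,(\nabla_{x_4}R)_{x_2,x_5}\cdot\kappa$ of~\eqref{eq:F22_ausmultipliziert}. The delicate point is the second half: viewed against the prolongation variable $\kappa^1=\nabla\kappa$, the derivation~\eqref{eq:derivation} by $R_{x_1,x_5}$ acts on \emph{all} arguments of the three-tensor $\nabla\kappa$, hence also on the slot carrying the differentiation direction $x_4$. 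Splitting this action into the part acting on the two $\kappa$-arguments and the part acting on the derivative slot is what produces the extra terms $-\nabla_{R_{x_1,x_5}x_4}\kappa$ and $-2\,\nabla_{R_{x_2,x_5}x_4}\kappa$, while the complementary part combines with the terms $R_{x_1,x_5}\cdot\nabla_{x_4}\kappa$ and $R_{x_2,x_5}\cdot\nabla_{x_4}\kappa$ already present in~\eqref{eq:def_F22}, upgrading their coefficients from $1$ to $2$ and from $1$ to $3$.

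The main obstacle I anticipate is exactly this bookkeeping of the derivation across slots, compounded by the fact that the Young projector in~\eqref{eq:def_F22} symmetrises the differentiation slot $x_4$ with the slot $x_5$ on which the curvature factors $R_{\bullet,x_5}$ themselves depend. I would therefore carry the projector along explicitly and use the symmetries of an algebraic curvature tensor (pair symmetry and the first Bianchi identity) together with the Killing property of $\kappa$ to check that the contributions which are not manifestly of the asserted shape either cancel or are absorbed after projection; this is what I expect to fix the asymmetry between the coefficients $-1$ and $-2$ of the two correction terms. Once the slot-by-slot contributions are collected and the projector applied, direct comparison with~\eqref{eq:def_F22} gives~\eqref{eq:F22_ausmultipliziert}.
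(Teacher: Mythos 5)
Your proposal is correct and takes essentially the same route as the paper, which packages your slot-by-slot bookkeeping into the single product rule $\nabla_x\big(A\cdot\alpha(y_2,y_3)\big)=\nabla_xA\cdot\alpha(y_2,y_3)+A\cdot\nabla_x\alpha(y_2,y_3)-\nabla_{A\,x}\alpha(y_2,y_3)$ for an endomorphism-valued field $A$ (``the endomorphism should in effect not act on the covariant derivative slot'') and applies it termwise inside the Young projector. One remark: the caution in your final paragraph is unnecessary --- the correction terms inherit the coefficients $1$ and $2$ directly from the bracket $\nabla_{x_4}\big(R_{x_1,x_5}\cdot\kappa+2\,R_{x_2,x_5}\cdot\kappa\big)$ in~\eqref{eq:def_F22}, so the asymmetry $-1$ versus $-2$ is automatic and the identity is purely formal, requiring neither the pair symmetry or Bianchi identity of $R$ nor the Killing property of $\kappa$.
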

\begin{proof}
The product rule for the covariant 
derivative of an endomorphism-valued tensor field $A$ acting on $\alpha$ yields
\begin{eqnarray}
\nabla_x \big (A\cdot \alpha(y_2,y_3) \big ) &\;\;=\;\;& \begin{array}{c}\nabla_x A \cdot \alpha(y_2,y_3)
+ A\cdot  \nabla_x \alpha(y_2,y_3) -  \nabla_{A\,x} \alpha(y_2,y_3)\end{array},
\end{eqnarray}
since the endomorphism should in effect not act on the covariant derivative slot. Applying this to~\eqref{eq:def_F21} we obtain~\eqref{eq:F22_ausmultipliziert}.
\end{proof}

We will prove the third equation of the prolongation of the Killing equation:

\bigskip
\begin{proposition}\label{p:prolong_3}
If $\kappa$ is Killing, then~\eqref{eq:prolong_3} holds with $F^2(\kappa)$ defined via~\eqref{eq:def_F22}.
\end{proposition}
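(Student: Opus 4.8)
The plan is to differentiate the defining relation~\eqref{eq:kappa_prolong_2} for $\kappa^2$ and then reduce everything to a third-order algebraic identity that can be matched against the expanded form of $F^2$ in Lemma~\ref{le:ausmultiplizieren}. Since the Young projector $\rmP$ of shape $(2,2)$ is a fixed combination of slot permutations, it is parallel and commutes with $\nabla$; hence $\nabla_{x_1}\kappa^2(x_2,x_3,x_4,x_5) = \rmP\,\nabla^3_{x_1,x_2,x_3}\kappa(x_4,x_5)$, where $\rmP$ now acts on the four arguments $x_2,x_3,x_4,x_5$ (with $x_2,x_3$ in the first row). As $F^2$ is, by~\eqref{eq:def_F22}, itself of the form $\rmP$ applied to a curvature expression, it suffices to prove that $\nabla^3_{x_1,x_2,x_3}\kappa(x_4,x_5)$ and that curvature expression agree after $\rmP$, i.e. that their difference lies in $\ker\rmP$.

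First I would assemble the algebraic input furnished by the Killing condition. Differentiating~\eqref{eq:def_Killing} once and then twice, and using that $\nabla$ commutes with the cyclic symmetrization over the innermost derivative direction together with the two $\kappa$-slots, yields $\cyclic\,\nabla^2\kappa = 0$ and $\cyclic\,\nabla^3\kappa = 0$. The second-order content is precisely Lemma~\eqref{eq:Killing_alternativ_*}; I would then differentiate that identity once more and commute the new outer derivative past the existing ones by the Ricci identity, producing a third-order analogue of~\eqref{eq:Killing_alternativ_*} that rewrites the derivative-symmetrized part of $\nabla^3\kappa$ as curvature acting on $\kappa$ and on $\nabla\kappa$. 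This is the genuine new ingredient, playing the role that~\eqref{eq:Killing_alternativ_*} played in the proof of Proposition~\ref{p:prolong_4}.

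Next I would expand $\rmP\,\nabla^3_{x_1,x_2,x_3}\kappa(x_4,x_5)$ by inserting the sixteen-term Young symmetrizer~\eqref{eq:Young_(2,2)} in the arguments $x_2,x_3,x_4,x_5$, exactly as in Proposition~\ref{p:prolong_4} but one derivative higher. Reordering the three covariant derivatives by the Ricci identity brings each summand to a common derivative order at the cost of curvature corrections, and the cyclic relations $\cyclic\,\nabla^3\kappa = 0$ together with the third-order analogue of~\eqref{eq:Killing_alternativ_*} then convert the remaining pure third-derivative combinations entirely into curvature; no genuine third-derivative part survives, just as the cyclic terms were eliminated in Proposition~\ref{p:prolong_4}. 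What is left is a combination of terms $R\cdot\nabla\kappa$, $\nabla R\cdot\kappa$ and connection corrections $\nabla_{Rx}\cdot\kappa$, which I would compare with the expanded formula~\eqref{eq:F22_ausmultipliziert}; matching coefficients identifies it with $F^2$ and establishes~\eqref{eq:prolong_3}.

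The hard part will be the bookkeeping in this final comparison. Each of the sixteen summands of~\eqref{eq:Young_(2,2)}, after the Ricci commutations needed to align the three derivatives, breaks into a genuine third-derivative piece and several curvature pieces; the two delicate points are to confirm that every third-derivative piece is annihilated once the differentiated Killing relations are invoked, and to verify that the curvature pieces assemble with exactly the coefficients $2$ and $3$ and the correct corrections $\nabla_{Rx}\cdot\kappa$ recorded in~\eqref{eq:F22_ausmultipliziert}. The abstract remark preceding Lemma~\ref{le:ausmultiplizieren}, namely that $F^2$ is determined as a bilinear tensor in $(\kappa,\nabla\kappa)$ and $(R,\nabla R)$, reduces this verification to tracking finitely many scalar coefficients rather than re-deriving the curvature terms by hand.
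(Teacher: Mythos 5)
Your plan is sound and would succeed; its skeleton is in fact the same as the paper's: commute the parallel projector $\rmP$ past $\nabla$, reorder the three covariant derivatives via the Ricci identity, use differentiated Killing relations to show that no pure third-derivative part survives under the symmetrizer, and identify the curvature residue with $F^2$. The genuine divergence is in execution, and the paper's route is markedly leaner at two points. First, the paper never expands the sixteen-term symmetrizer~\eqref{eq:Young_(2,2)}: the key elimination is~\eqref{eq:klaro}, which is \emph{purely algebraic} --- the expression there is symmetric in two arguments lying in the same column of the tableau, hence annihilated by the column antisymmetrizer --- and, combined with the Killing condition differentiated twice (applied to the two innermost slots plus the $\kappa$-slots), it converts $\rmS\,\nabla^{3)}_{x_4,x_5,x_1}\kappa(x_2,x_3)$ into curvature terms in a few lines. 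In particular no differentiated version of~\eqref{eq:Killing_alternativ_*} is needed; note also that your characterization of~\eqref{eq:Killing_alternativ_*} as ``precisely the second-order content'' of the differentiated Killing equation is loose --- that lemma already encodes Ricci commutations on top of the differentiated Killing identity, so your third-order analogue would re-derive by hand information the paper extracts for free from column antisymmetry. Second, the paper matches the residue directly against the compact form~\eqref{eq:def_F22}, keeping the curvature action undifferentiated inside $\nabla_{x_4}\big(R_{\,\cdot\,,x_5}\cdot\kappa\big)$, so the correction terms $\nabla_{R_{\,\cdot\,,\,\cdot\,}x_4}\kappa$ appearing in~\eqref{eq:F22_ausmultipliziert} never enter the proof; Lemma~\ref{le:ausmultiplizieren} is used elsewhere (to establish the 1-jet dependence exploited in Proposition~\ref{p:main}), not here. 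Your plan to match against the expanded form instead is viable but substantially heavier, and one caution: the abstract remark that $F^2(\kappa,R)$ is bilinear in $(\kappa,\nabla\kappa)$ and $(R,\nabla R)$ does not by itself reduce the comparison to ``finitely many scalar coefficients,'' since the space of such bilinear invariants with the given symmetries is multi-dimensional --- you would still have to verify the identity term by term, which is exactly what the paper's three closing skew-symmetry identities under $\rmS$ (absorbing the $\frac{1}{2}$-weighted terms into the coefficient $2$ in~\eqref{eq:def_F22}) accomplish compactly.
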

\begin{proof}
Using the Killing equation~\eqref{eq:def_Killing} together with the fact that
\begin{equation}\label{eq:klaro}
\rmS_{\scaleto{\begin{array}{|c|c|}
\cline{1-2}
2 & 3 \\
\cline{1-2}
4 & 5 \\
\cline{1-2} 
\end{array}}{12pt}} \big ( \begin{array}{c}
\nabla^{3)}_{x_2,x_4,x_5}\kappa(x_1,x_3) + \nabla^{3)}_{x_4,x_2,x_5}\kappa(x_1,x_3)
 \end{array}\big )\;\;=\;\;0,
\end{equation}
we conclude hat
\begin{eqnarray*}
\scaleto{\rmS_{\scaleto{\begin{array}{|c|c|}
\cline{1-2}
2 & 3 \\
\cline{1-2}
4 & 5 \\
\cline{1-2} 
\end{array}}{12pt}}}{32pt} \left ( \begin{array}{c}
+ \frac{1}{2}R_{x_2,x_4} \nabla_{x_5}\kappa(x_1,x_3)\\  + \nabla_{x_4}\big (R_{x_2,x_5} \kappa (x_1,x_3)\big )\\
 + \frac{1}{2} \begin{array}{c} \nabla_{x_4}\big (R_{x_3,x_5} \kappa(x_1,x_2)\big ) \end{array}
\end{array}\right ) &=& \scaleto{\rmS_{\scaleto{\begin{array}{|c|c|}
\cline{1-2}
2 & 3 \\
\cline{1-2}
4 & 5 \\
\cline{1-2} 
\end{array}}{12pt}}}{24pt} \left ( \begin{array}{l}
\big (\nabla^{3)}_{x_2,x_4,x_5} + \nabla^{3)}_{x_4,x_2,x_5} -
                                \nabla^{3)}_{x_4,x_5,x_2}\big )\kappa(x_1,x_3)\\
 - \nabla^{3)}_{x_4,x_5,x_3}\kappa(x_1,x_2)\\
 \end{array}\right )\\
&\stackrel{\eqref{eq:klaro}}{=}& \rmS_{\scaleto{\begin{array}{|c|c|}
\cline{1-2}
2 & 3 \\
\cline{1-2}
4 & 5 \\
\cline{1-2} 
\end{array}}{12pt}}\left ( \begin{array}{l}
- \nabla^{3)}_{x_4,x_5,x_2}\kappa(x_1,x_3) - \nabla^{3)}_{x_4,x_5,x_3}\kappa(x_1,x_2) \\
 \end{array}\right )\\
&\stackrel{\eqref{eq:def_Killing}}{=}& \rmS_{\scaleto{\begin{array}{|c|c|}
\cline{1-2}
2 & 3 \\
\cline{1-2}
4 & 5 \\
\cline{1-2} 
\end{array}}{12pt}} \nabla^{3)}_{x_4,x_5,x_1}\kappa(x_2,x_3)
\end{eqnarray*}
Thus the Ricci identity
\begin{equation*}
\nabla^{3)}_{x_1,x_4,x_5}\kappa(x_2,x_3)  - \nabla^{3)}_{x_4,x_5,x_1}\kappa(x_2,x_3)\;\;=\;\;R_{x_1,x_4}\nabla_{x_5}\kappa(x_2,x_3) + \nabla_{x_4} \big (R_{x_1,x_5} \kappa (x_2,x_3)\big )
\end{equation*}
shows that
\begin{eqnarray*} 
 \nabla_{x_1}\rmP_{\scaleto{\begin{array}{|c|c|}
\cline{1-2}
2 & 3 \\
\cline{1-2}
4 & 5 \\
\cline{1-2} 
\end{array}}{12pt}}\nabla^2_{x_4,x_5}\kappa(x_2,x_3) &\;\;=\;\;&\scaleto{\rmP_{\scaleto{\begin{array}{|c|c|}
\cline{1-2}
2 & 3 \\
\cline{1-2}
4 & 5 \\
\cline{1-2} 
\end{array}}{12pt}}}{32pt}\left (
\begin{array}{c}
 R_{x_1,x_4}\nabla_{x_5}\kappa(x_2,x_3)
+ \nabla_{x_4} \big ( R_{x_1,x_5} \kappa(x_2,x_3)\big )\\
 + \nabla_{x_4}\big (R_{x_2,x_5} \kappa(x_1,x_3) \big)\\ 
+ \frac{1}{2}\left ( \begin{array}{c}\nabla_{x_4}\big (R_{x_3,x_5} \kappa(x_1,x_2)\big )
+ R_{x_2,x_4} \nabla_{x_5}\kappa(x_1,x_3)\end{array} \right )
\end{array}\right )
\end{eqnarray*}
The skew-symmetry $R_{u,v} = - R_{v,u}$ implies that
\begin{eqnarray*}
\frac{1}{2}\rmS_{\scaleto{\begin{array}{|c|c|}
\cline{1-2}
2 & 3 \\
\cline{1-2}
4 & 5 \\
\cline{1-2} 
\end{array}}{12pt}}\nabla_{x_4} \big (R_{x_3,x_5} \kappa(x_1,x_2)\big ) &=& \rmS_{\scaleto{\begin{array}{|c|c|}
\cline{1-2}
2 & 3 \\
\cline{1-2}
4 & 5 \\
\cline{1-2} 
\end{array}}{12pt}}\nabla_{x_4}\big (R_{x_2,x_5}\kappa(x_1,x_3)\big ),
\end{eqnarray*}

\begin{eqnarray*}
\frac{1}{2}\rmS_{\scaleto{\begin{array}{|c|c|}
\cline{1-2}
2 & 3 \\
\cline{1-2}
4 & 5 \\
\cline{1-2} 
\end{array}}{12pt}} R_{x_2,x_4} \nabla_{x_5} \kappa(x_1,x_3) &=& \rmS_{\scaleto{\begin{array}{|c|c|}
\cline{1-2}
2 & 3 \\
\cline{1-2}
4 & 5 \\
\cline{1-2} 
\end{array}}{12pt}} R_{x_2,x_5} \nabla_{x_4} \kappa(x_1,x_3) 
\end{eqnarray*}
and the symmetry of $\kappa$ gives
\begin{eqnarray*}
\rmS_{\scaleto{\begin{array}{|c|c|}
\cline{1-2}
2 & 3 \\
\cline{1-2}
4 & 5 \\
\cline{1-2} 
\end{array}}{12pt}} R_{x_1,x_4}\nabla_{x_5} \kappa(x_2,x_3) &=& \rmS_{\scaleto{\begin{array}{|c|c|}
\cline{1-2}
2 & 3 \\
\cline{1-2}
4 & 5 \\
\cline{1-2} 
\end{array}}{12pt}}R_{x_1,x_5}\nabla_{x_4}\kappa(x_2,x_3).
\end{eqnarray*}
This establishes that~\eqref{eq:prolong_3} holds with $F^2(\kappa)$ defined
by~\eqref{eq:def_F22}.
\end{proof}

\subsection{Prolongation  of symmetric Killing 2-tensors in spaces of constant
curvature}
Let $M$ be a (pseudo) Riemannian manifold of constant sectional
curvature. Since the following arguments are purely local, we can assume that
$M$ is a (generalised) sphere as considered in
Example~\ref{ex:constant_curvature}. Then the cone is a flat (pseudo)
Euclidean space, hence Proposition~\ref{p:prolong} shows that here the
dimension of the space of Killing tensors is the maximal possible. In particular,~\eqref{eq:prolong_1_speziell}-\eqref{eq:prolong_3_speziell} describe
the prolongation of symmetric Killing 2-tensors on $M$. This enables us
to calculate $F^1(\kappa,R_1)$ and $F^2(\kappa,R_1)$ with $R_1(x,y)\;:=\;-
x\wedge y$ on every pseudo Riemannian manifold $M$ as follows.

\bigskip
\begin{lemma}\label{le:curvature_terms_for_S1}
We have
\begin{align}\label{eq:F21_speziell}
 x\lrcorner F^1(R_1,\kappa)\;\;=\;\;& x\lrcorner (g\owedge \kappa) \ - 2\, \kappa\owedge x^\sharp ,\\
\label{eq:F22_speziell}
 x\lrcorner F^2(R_1,\kappa)\;\;=\;\;& \ \ \ -  g\owedge x\lrcorner \kappa^1  - \kappa^1 \owedge x^\sharp .
 \end{align}
for every symmetric 2-tensor $\kappa$ on $M$.
\end{lemma}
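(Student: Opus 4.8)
The plan is to avoid substituting $R_1(x,y)=-x\wedge y$ directly into the algebraic templates~\eqref{eq:def_F21} and~\eqref{eq:def_F22}, and instead to read the two identities off from the prolongation on a space of constant curvature. The guiding observation is that both sides of~\eqref{eq:F21_speziell} and~\eqref{eq:F22_speziell} are \emph{pointwise tensorial} expressions that are \emph{linear} in a single argument: by~\eqref{eq:def_F21} the left-hand side of~\eqref{eq:F21_speziell} and its right-hand side $x\lrcorner(g\owedge\kappa)-2\,\kappa\owedge x^\sharp$ are linear in $\kappa$, while by Lemma~\ref{le:ausmultiplizieren}, using $\nabla R_1=0$ to kill the $\nabla R$ terms in~\eqref{eq:F22_ausmultipliziert}, the expression $F^2(\kappa,R_1)$ and the right-hand side $-\,g\owedge x\lrcorner\kappa^1-\kappa^1\owedge x^\sharp$ of~\eqref{eq:F22_speziell} are linear in $\kappa^1=\nabla\kappa$. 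Consequently it suffices to verify each identity at one point $p$, for $\kappa|_p$ ranging over a spanning set of $\Sym^2 T_p^*M$ and, respectively, for $\kappa^1|_p$ ranging over a spanning set of the middle summand in Lemma~\ref{le:branching_rules}.

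To produce such spanning sets I would specialise to the unit sphere; as the statement is purely local this loses nothing, and there the genuine curvature tensor is exactly $R_1$. On this sphere the cone is flat, so by Example~\ref{ex:constant_curvature} every symmetric Killing $2$-tensor $\kappa$ satisfies the nullity conditions tautologically and its associated tensor $S^\kappa$ is $\hat\nabla$-parallel. Feeding this into Proposition~\ref{p:prolong} (implication $(a)\Rightarrow(c)$) shows that the \emph{special} prolongation~\eqref{eq:prolong_1_speziell}--\eqref{eq:prolong_3_speziell} holds for every such $\kappa$. At the same time the \emph{general} prolongation~\eqref{eq:prolong_2}--\eqref{eq:prolong_3} holds for every Killing tensor on any manifold, and on the sphere its curvature terms are precisely $x\lrcorner F^1(\kappa,R_1)$ and $x\lrcorner F^2(\kappa,R_1)$ because the genuine curvature coincides with $R_1$.

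The crux is then a term-by-term comparison of the two prolongations. In~\eqref{eq:prolong_2} and~\eqref{eq:prolong_2_speziell} the leading terms $\nabla_x\kappa^1$ and $x\lrcorner\kappa^2$ agree and cancel, leaving $x\lrcorner F^1(\kappa,R_1)=x\lrcorner(g\owedge\kappa)-2\,\kappa\owedge x^\sharp$; comparing~\eqref{eq:prolong_3} with~\eqref{eq:prolong_3_speziell} cancels $\nabla_x\kappa^2$ and leaves $x\lrcorner F^2(\kappa,R_1)=-\,g\owedge x\lrcorner\kappa^1-\kappa^1\owedge x^\sharp$. At this stage both identities are established, but only for those $\kappa$ that happen to be Killing on the sphere.

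Upgrading these to identities valid for arbitrary $\kappa$ on an arbitrary $M$ is the main obstacle. The decisive input is that on the sphere the space of symmetric Killing $2$-tensors attains the maximal dimension $\tfrac{1}{3}\binom{n+2}{2}\binom{n+1}{2}$, so that the prolongation map $\kappa\mapsto(\kappa,\kappa^1,\kappa^2)|_p$ is a linear isomorphism onto the full space of algebraic curvature tensors of $T_pM\oplus\R$. Composing with the $\GL_n$-equivariant projections of Lemma~\ref{le:branching_rules} shows that the values $\kappa|_p$ already fill all of $\Sym^2 T_p^*M$ and the values $\kappa^1|_p$ fill the entire middle summand. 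Since the two sides of~\eqref{eq:F21_speziell} and~\eqref{eq:F22_speziell} are fixed universal tensorial expressions, independent of $M$, the equalities just established on these spanning sets force the identities for every symmetric $2$-tensor $\kappa$ on every pseudo Riemannian manifold, which completes the proof.
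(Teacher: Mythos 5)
Your proposal is correct and follows essentially the paper's own proof: both reduce to the generalised sphere using that $F^1$ and $F^2$ are tensorial in $R$ resp.\ its $1$-jet together with $\nabla R_1=0$, there compare the general prolongation~\eqref{eq:prolong_2}--\eqref{eq:prolong_3} with the special one~\eqref{eq:prolong_1_speziell}--\eqref{eq:prolong_3_speziell} term by term, and finally pass from Killing tensors to arbitrary symmetric $2$-tensors by linearity in the $1$-jet of $\kappa$, using that on the sphere the Killing prolongation realises every pointwise value of $(\kappa,\kappa^1)$. The one thing to adjust is your appeal to Example~\ref{ex:constant_curvature} for the parallelity of $S^\kappa$: that example rests on Theorem~\ref{th:main_1}, whose proof (through Proposition~\ref{p:main}) uses the very lemma at hand, so you should instead argue as in your closing paragraph --- on the flat cone every algebraic curvature tensor extends to a parallel section, Proposition~\ref{p:prolong}~$(a)\Rightarrow(c)$ turns these into Killing tensors satisfying~\eqref{eq:prolong_1_speziell}--\eqref{eq:prolong_3_speziell}, and the dimension bound from the Killing connection shows these exhaust all Killing tensors --- which is exactly the non-circular route the paper takes.
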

\begin{proof}
We have already remarked that $F^1$ is a linear tensor in $R$ and that $F^2$ depends
tensorial linear on the 1-jet of $R$. Therefore, and because $\nabla R_1 = 0$ on every
pseudo Riemannian manifold, it suffices to show that~\eqref{eq:F21_speziell} and~\eqref{eq:F22_speziell} hold on a generalised sphere $M$. Further, comparing the equations of the prolongation of the
Killing equation~\eqref{eq:prolong_2}-\eqref{eq:prolong_3} with~\eqref{eq:prolong_1_speziell}-\eqref{eq:prolong_3_speziell},
we see that~\eqref{eq:F21_speziell},\eqref{eq:F22_speziell} hold for every Killing tensor. Moreover, since both $F^1$ and $F^2$
depend tensorial linear on the 1-jet of $\kappa$ and because moreover every 1-jet of a symmetric 2-tensor on $M$ can be extended 
to a Killing tensor according to Proposition~\ref{p:prolong}, these identities
automatically hold for all symmetric 2-tensors $\kappa$.
\end{proof}

For a proof of the following in the Riemannian case see~\cite[Lemme~1.2]{Ga},
the indefinite case does not need an extra argument.

\bigskip
\begin{lemma}\label{le:krümmungstensor_des_kegels}
  The curvature tensor of $\hat M$ is a horizontal tensor. When seen as a
  $(1,3)$-tensor it does not depend on the radial component $r$ and is given by
\begin{equation}
\label{eq:hat_R_1}
\hat R_{x,y}z\;\;=\;\;R_{x,y}z + x\wedge y (z)
\end{equation}
for all $p\in M$ and $x,y,z\in T_pM$.
\end{lemma}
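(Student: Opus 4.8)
The plan is to compute the Riemann curvature tensor of the cone directly from its definition $\hnabla_u\hnabla_v w - \hnabla_v\hnabla_u w - \hnabla_{[u,v]}w$, feeding in the explicit Levi-Civita connection~\eqref{eq:Levi-Civita_connection_of_the_cone_1}--\eqref{eq:Levi-Civita_connection_of_the_cone_2}. I would work with horizontal vector fields obtained as pullbacks $X,Y,Z$ of vector fields on $M$; these are independent of $r$ and satisfy $[\pd_r,X]=0$ and $X(r)=0$. To kill nuisance terms I would additionally arrange, at the base point $p$, that $(\nabla X)|_p=(\nabla Y)|_p=(\nabla Z)|_p=0$ (a synchronous frame), so that $[X,Y]|_p=0$ and $X\langle Y,Z\rangle|_p=0$. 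Since the claimed curvature is a tensor, this loses no generality.

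Second, I would split into cases according to how many radial directions $\pd_r$ occur among the three arguments. For the purely horizontal case $\hnabla_X\hnabla_Y Z$, substituting the Gauß formula~\eqref{eq:Levi-Civita_connection_of_the_cone_3} twice and using $X(r)=0$, $X\langle Y,Z\rangle|_p=0$ and $\hnabla_X\pd_r=\tfrac1r X$ yields, at $p$, the value $\nabla_X\nabla_Y Z-\langle Y,Z\rangle X$; note that the factor $r$ from Gauß cancels the $\tfrac1r$ from Weingarten, so no $r$-dependence survives. Antisymmetrising in $X,Y$ and discarding $\hnabla_{[X,Y]}Z|_p=0$ gives $\hat R_{x,y}z = R_{x,y}z + \langle x,z\rangle y - \langle y,z\rangle x$, which is exactly $R_{x,y}z + x\wedge y(z)$ in the sign convention fixed by $R_1(x,y)=-x\wedge y$.

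Third, I would verify horizontality. Inserting $\pd_r$ in the first (hence, by skew-symmetry, also the second) slot, the terms $\hnabla_{\pd_r}\hnabla_Y Z$ and $\hnabla_Y\hnabla_{\pd_r}Z$ each reduce, via~\eqref{eq:Levi-Civita_connection_of_the_cone_2} and~\eqref{eq:Levi-Civita_connection_of_the_cone_3}, to $\tfrac1r\nabla_Y Z-\langle Y,Z\rangle\pd_r$ and cancel, so $\hat R_{\pd_r,Y}Z=0$; inserting $\pd_r$ in the last slot, $\hnabla_X\hnabla_Y\pd_r$ minus its transpose collapses via the Weingarten formula to $\tfrac1r(\nabla_XY-\nabla_YX-[X,Y])=0$ by torsion-freeness. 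The case of two radial arguments is immediate from skew-symmetry together with $\hnabla_{\pd_r}\pd_r=0$~\eqref{eq:Levi-Civita_connection_of_the_cone_1}. Taken together these establish that $\hat R$ annihilates $\pd_r$ in every slot and is $r$-independent as a $(1,3)$-tensor.

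The computation is essentially bookkeeping; the only point that needs care is tracking the $\tfrac1r$ factors coming from Weingarten against the $r$ coming from Gauß, so that they cancel cleanly and no $r$ remains. This simultaneous cancellation is what produces both the formula~\eqref{eq:hat_R_1} and the asserted $r$-independence, and is the step most prone to sign or factor errors. Finally, since the Gauß and Weingarten identities~\eqref{eq:Levi-Civita_connection_of_the_cone_3}--\eqref{eq:Levi-Civita_connection_of_the_cone_2} were derived without any use of definiteness of $g$, the entire calculation is insensitive to the signature, so the pseudo Riemannian case requires no separate argument, matching the remark accompanying the statement.
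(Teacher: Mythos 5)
Your proof is correct, and it is worth pointing out that the paper does not actually prove this lemma at all: it cites Gallot's Lemme~1.2 for the Riemannian case and merely remarks that the indefinite case needs no extra argument. Your direct computation therefore supplies precisely what the paper outsources, and the bookkeeping checks out. With horizontal lifts and a frame synchronous at $p$, the Gau{\ss} term $-r\langle y,z\rangle\,\pd_r$ from~\eqref{eq:Levi-Civita_connection_of_the_cone_3} paired with the Weingarten term $\hnabla_x\pd_r=\tfrac1r x$ from~\eqref{eq:Levi-Civita_connection_of_the_cone_2} produces the $r$-free correction $\langle x,z\rangle y-\langle y,z\rangle x=x\wedge y(z)$ in the paper's convention $R_1(x,y)=-x\wedge y$; a useful consistency check is that on the unit sphere $R=R_1$, so your formula gives $\hat R=0$, as it must for the flat cone. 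The residual $\pd_r$-components of $\hnabla_X\hnabla_Y Z$ indeed cancel against those of $\hnabla_{[X,Y]}Z$ in general (their antisymmetrization is $r\langle[X,Y],Z\rangle\,\pd_r$), or vanish outright at $p$ in your synchronous frame, and the mixed cases $\hat R_{\pd_r,Y}Z$, $\hat R_{X,Y}\pd_r$ and $\hat R_{\pd_r,Y}\pd_r$ vanish exactly as you indicate --- the last most quickly via $\hnabla_{\pd_r}\tfrac{x}{r}=0$, which is~\eqref{eq:parallele_vektorfelder_laengs_radialer_kurven}. Your closing observation that the Gau{\ss}--Weingarten identities are signature-blind is precisely the content of the paper's accompanying remark. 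What the citation buys is brevity; what your computation buys is self-containedness and a transparent mechanism --- the cancellation of $r$ against $\tfrac1r$ --- explaining simultaneously the formula, the $r$-independence, and the insensitivity to signature.
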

We obtain the following extension of Proposition~\ref{p:prolong}:

\bigskip
\begin{proposition}\label{p:main}
The following conditions are equivalent:
\begin{enumerate}
\item
The algebraic curvature tensor $S^\kappa$ associated with $\kappa$ is parallel on the cone $\hat M$;
\item $\kappa$ is Killing and the curvature conditions~\eqref{eq:curv_cond_1} and~\eqref{eq:curv_cond_2} hold.
\end{enumerate}
\end{proposition}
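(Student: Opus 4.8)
The plan is to reduce everything to Proposition~\ref{p:prolong}, which already shows that parallelity of $S^\kappa$ on the cone is equivalent to $\kappa$ being Killing together with the special prolongation system \eqref{eq:prolong_1_speziell}--\eqref{eq:prolong_3_speziell}. On the other hand, for \emph{every} Killing tensor the genuine prolongation \eqref{eq:prolong_1}--\eqref{eq:prolong_3} holds, and by Lemma~\ref{le:curvature_terms_for_S1} the right-hand sides of \eqref{eq:prolong_2_speziell}--\eqref{eq:prolong_3_speziell} are precisely $x\lrcorner F^1(\kappa,R_1)$ and $x\lrcorner F^2(\kappa,R_1)$ for the model curvature $R_1(x,y)=-x\wedge y$. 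Hence, for Killing $\kappa$, the special system and the genuine prolongation differ only by the replacement of $F^1(\kappa,R),F^2(\kappa,R)$ by $F^1(\kappa,R_1),F^2(\kappa,R_1)$. Recalling that $F^1(\kappa,R)$ is tensorial in $R\cdot\kappa$ and $F^2(\kappa,R)$ in the pair $(R\cdot\kappa,R\cdot\nabla\kappa)$, the proposition becomes the assertion that the vanishing of these differences is equivalent to the nullity conditions \eqref{eq:curv_cond_1}--\eqref{eq:curv_cond_2}.

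For $(b)\Rightarrow(a)$ I would argue directly. Assume $\kappa$ is Killing and that \eqref{eq:curv_cond_1}--\eqref{eq:curv_cond_2} hold; these say exactly that $R\cdot\kappa=R_1\cdot\kappa$ and $R\cdot\nabla\kappa=R_1\cdot\nabla\kappa$ as tensor fields on $M$ (so in particular their covariant derivatives coincide). By the tensoriality just recalled this forces $F^1(\kappa,R)=F^1(\kappa,R_1)$ and $F^2(\kappa,R)=F^2(\kappa,R_1)$, whence the prolongation \eqref{eq:prolong_1}--\eqref{eq:prolong_3} coincides with the special system \eqref{eq:prolong_1_speziell}--\eqref{eq:prolong_3_speziell}. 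Proposition~\ref{p:prolong} then yields $\hat\nabla S^\kappa=0$, which is $(a)$.

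For $(a)\Rightarrow(b)$ the Killing property of $\kappa$ and $S=S^\kappa$ are again furnished by Proposition~\ref{p:prolong}, so only the curvature conditions remain. Here the cleanest route avoids inverting $F^1,F^2$: since $S^\kappa$ is $\hat\nabla$-parallel it is annihilated by the curvature, $\hat R_{u,v}\cdot S^\kappa=0$ for all $u,v$. By Lemma~\ref{le:krümmungstensor_des_kegels} the cone curvature is horizontal and satisfies $\hat R_{x,y}=R_{x,y}+x\wedge y=(R-R_1)_{x,y}$ on horizontal vectors, so $\hat R_{x,y}$ kills $\partial_r$ and $\d r$ and acts by derivation on the horizontal factors of $S^\kappa=r^2\,\kappa\owedge\d r\bullet\d r+r^3\,\kappa^1\owedge\d r+r^4\,C^\kappa$. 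Consequently $\hat R_{x,y}\cdot S^\kappa$ decomposes along \eqref{eq:glndec} as the triple $(r^2\,\hat R_{x,y}\cdot\kappa,\ r^3\,\hat R_{x,y}\cdot\kappa^1,\ r^4\,\hat R_{x,y}\cdot C^\kappa)$, and its vanishing forces each component to vanish. Reading off the first two components and using $\kappa^1=\nabla\kappa$ together with $\hat R_{x,y}=R_{x,y}+x\wedge y$ gives $R_{x,y}\cdot\kappa=-x\wedge y\cdot\kappa$ and $R_{x,y}\cdot\nabla\kappa=-x\wedge y\cdot\nabla\kappa$, i.e.\ \eqref{eq:curv_cond_1}--\eqref{eq:curv_cond_2}.

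The main obstacle is precisely this last step. The algebraically tempting approach, namely to deduce \eqref{eq:curv_cond_1}--\eqref{eq:curv_cond_2} from $F^1(\kappa,R)=F^1(\kappa,R_1)$ and $F^2(\kappa,R)=F^2(\kappa,R_1)$ via linearity in the curvature, would require injectivity of the contraction maps $R'\cdot\kappa\mapsto F^1(\kappa,R')$ and $(R'\cdot\kappa,R'\cdot\nabla\kappa)\mapsto F^2(\kappa,R')$ on the relevant $\GL(n)$-module $\Lambda^2V^*\otimes\Sym^2V^*$; this holds but is representation-theoretically fiddly. The observation that the restricted cone curvature is literally $R-R_1$ lets one bypass it, since then the nullity conditions are nothing but the statement that the parallel tensor $S^\kappa$ is annihilated by $\hat R$ in its $\kappa$- and $\nabla\kappa$-slots.
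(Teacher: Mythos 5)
Your proposal is correct and takes essentially the same route as the paper's own proof: for $(b)\Rightarrow(a)$ it uses the tensoriality of $F^1(\kappa,R)$ in $R\cdot\kappa$ and of $F^2(\kappa,R)$ in $(R\cdot\kappa,R\cdot\nabla\kappa)$ together with Lemma~\ref{le:curvature_terms_for_S1} and Proposition~\ref{p:prolong}, and for $(a)\Rightarrow(b)$ it uses that the horizontal cone curvature $\hat R_{x,y}=R_{x,y}+x\wedge y$ must annihilate each component of the parallel triple $(r^2\,\kappa,\,r^3\,\kappa^1,\,r^4\,C^\kappa)$, which is exactly the paper's argument. Your closing remark about sidestepping the injectivity of the maps $R\cdot\kappa\mapsto F^1$ and $(R\cdot\kappa,R\cdot\nabla\kappa)\mapsto F^2$ is a fair gloss on why the paper argues via curvature annihilation rather than by inverting those contractions.
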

\begin{proof}
For $(a) \Rightarrow (b)$: since $\hat \nabla S^\kappa = 0$, the curvature
endomorphism $\hat R_{x,y}$ annihilates $S$ for all $p\in M$ and $x,y\in T_pM$. Because  $\hat R$ is horizontal, we conclude that $\hat R$ annihilates
the components of~\eqref{eq:zerlegung_von_kappa_dach}. In particular, $\hat R_{x,y}\cdot \kappa = 0$ and $\hat R_{x,y} \cdot \nabla\kappa = 0$ for all $p\in M$ and $x,y\in T_pM$. 
Using~\eqref{eq:hat_R_1}, we see that both~\eqref{eq:curv_cond_1} and~\eqref{eq:curv_cond_2} hold.

For $(b)\Rightarrow (a)$: by assumption,~\eqref{eq:curv_cond_1}
and~\eqref{eq:curv_cond_2} hold. The expression $F^1(\kappa,R)$ is explicitly described in~\eqref{eq:def_F21}. From this one sees that it depends in fact only on the action $R_{x,y}\cdot \kappa$ of
the given algebraic curvature tensor $R$ on $\kappa$. Therefore~\eqref{eq:curv_cond_1} implies that
$F^1(R,\kappa) = F^1(R_1,\kappa)$. Similarly, the explicit
description~\eqref{eq:def_F22} of $F^2(\kappa,R)$ implies that the latter expression depends only on the actions 
$R_{x,y}\cdot\kappa$ and $R_{x,y}\cdot \nabla \kappa$ of the algebraic curvature tensor $R$ on $\kappa$ and $\nabla\kappa$. 
We obtain from~\eqref{eq:curv_cond_1} and~\eqref{eq:curv_cond_2} that $F^2(\kappa,R) = F^2(\kappa,R_1)$. Then we conclude from Lemma~\ref{le:curvature_terms_for_S1} that $F^1(\kappa,R)$ and $F^2(\kappa,R)$ are given by~\eqref{eq:F21_speziell} and~\eqref{eq:F22_speziell}. Substituting this into~\eqref{eq:prolong_1}-\eqref{eq:prolong_3} we see that~\eqref{eq:prolong_1_speziell}-\eqref{eq:prolong_3_speziell} hold. Hence the result follows from Proposition~\ref{p:prolong}.
\end{proof}

\section{Proof of Theorem~\ref{th:main_1}}
\label{se:implications}
Because of Propositions~\ref{p:prolong} and~\ref{p:main}, it remains to prove the following proposition:

\bigskip
\begin{proposition}\label{p:get_rid}
Suppose that $\dim(M) \geq 2$.
\begin{enumerate}
\item 
If the conditions~\eqref{eq:alg_curv_tensor_1}-\eqref{eq:alg_curv_tensor_2} hold for a pair $(\kappa,C)$, then $\kappa$ is
Killing. Further, $\kappa = \kappa^C$ where
\begin{equation}\label{eq:def_kappaC}
\kappa^C \;:=\; - \frac{1}{2}\widetilde \Ric^C + \frac{\widetilde \scal^C}{2(n - 1)}g
\end{equation}
and
\begin{equation}\label{eq:def_modified_Ricci_trace_und_tilde_scalC}
\widetilde \Ric^C \;\;:=\;\; \Ric^C + \frac{1}{4}\nabla^2\scal^C\ \ \ \text{and}\ \ \
\widetilde \scal^C \;\;:=\;\; \scal^C - \frac{1}{4} \nabla^*\nabla\scal^C.
\end{equation}
\item Suppose that $M$ is compact with negative definite metric tensor. If ~\eqref{eq:alg_curv_tensor_1}-\eqref{eq:alg_curv_tensor_2} hold for
a pair $(\kappa,C)$, then there exists a constant $c$ such that $\kappa = c\, g$. 
\end{enumerate}
\end{proposition}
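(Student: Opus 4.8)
The plan is to peel off the statement in four stages --- first the algebraic normalisation $C=C^\kappa$, then the two nullity conditions, then the Killing property, then the inversion formula $\kappa=\kappa^C$ --- and to treat (b) by a Bochner/holonomy argument built on (a). For (a) I would start exactly as in the implication $(b)\Rightarrow(c)$ of Proposition~\ref{p:prolong}: since $C$ is by hypothesis a symmetrized algebraic curvature tensor, it is fixed by the Young projector onto its symmetry type (Example~\ref{ex:Young_symmetrizer}(c)); applying this projector to \eqref{eq:alg_curv_tensor_1} and using the hook numbers $h_{(2,1)}=3$, $h_{(2,2)}=12$ with the conventions of Definition~\ref{de:product} identifies the right--hand side as $\kappa^2+\kappa\owedge g$, i.e. $C=C^\kappa$ of \eqref{eq:def_C_kappa}; this step is purely algebraic and does not presuppose that $\kappa$ is Killing. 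The nullity conditions then drop out: the left side of \eqref{eq:alg_curv_tensor_1} is symmetric in its first pair, so the antisymmetric part of the right side vanishes; since the antisymmetric part of $\nabla^2_{x,y}\kappa$ is $\tfrac12 R_{x,y}\!\cdot\!\kappa$ and that of $2\kappa\owedge x^\sharp$ is $\tfrac12(x\wedge y)\!\cdot\!\kappa$, this is precisely \eqref{eq:curv_cond_1}. Differentiating \eqref{eq:alg_curv_tensor_1}, antisymmetrising the new direction against the first slot and substituting \eqref{eq:alg_curv_tensor_2} for $\nabla C$, the Ricci identity turns the left side into $R_{\,\cdot\,,\,\cdot}\!\cdot\!\nabla\kappa$ and the surviving algebraic terms assemble into $-(\,\cdot\wedge\cdot\,)\!\cdot\!\nabla\kappa$, giving \eqref{eq:curv_cond_2}.

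The crux is the Killing property, and the difficulty --- which I expect to be the main obstacle --- is that neither equation constrains the completely symmetric part $\sigma:=\Sym\nabla\kappa$ at first order: the product $\nabla\kappa\owedge x^\sharp$ only sees the $\bbS_{(2,1)}$--component $\kappa^1$, its symmetric part being annihilated by the projector implicit in \eqref{eq:noch_ein_produkt}, and fully symmetrising either equation kills $\sigma$. I would therefore route through second order. Evaluating \eqref{eq:alg_curv_tensor_1} on $(x,u,u,u)$, the product term vanishes identically while $C(x,u,u,u)=0$ by $C(u,u,u,\cdot)=0$ and pair symmetry, so $\nabla^2_{x,u}\kappa(u,u)=0$ for all $x,u$; since evaluation on the diagonal is the symmetric part this reads $(\nabla_x\sigma)(u,u,u)=0$, whence $\nabla\sigma=0$, i.e. $\sigma$ is parallel. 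The key observation is that the third prolongation identity \eqref{eq:prolong_3} then still holds for such a $\kappa$: in the proof of Proposition~\ref{p:prolong_3} every appeal to the Killing equation enters through a \emph{second} covariant derivative of $\Sym\nabla\kappa=\sigma$, which now vanishes; hence $\nabla_x\kappa^2=F^2(\kappa;x)$. Using the nullity conditions just obtained, $F^2$ reduces to its constant--curvature value $-g\owedge x\lrcorner\kappa^1-\kappa^1\owedge x^\sharp$ by Lemma~\ref{le:curvature_terms_for_S1} (equation~\eqref{eq:F22_speziell}) and the reduction in Proposition~\ref{p:main}. Feeding this and $C=C^\kappa$ into \eqref{eq:alg_curv_tensor_2}, the two $\kappa^1$--contributions cancel exactly and one is left with $\sigma_x\owedge g=0$ for every $x$; since $h\mapsto h\owedge g$ is injective on symmetric $2$--tensors when $\dim M\ge 2$ (its failure in dimension one being the content of Remark~\ref{re:main_3}), we conclude $\sigma=0$, i.e. $\kappa$ is Killing.

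With $\kappa$ Killing and $C=C^\kappa=\kappa^2+\kappa\owedge g$ in hand, the inversion \eqref{eq:def_kappaC} is a contraction computation. I would form the Ricci and scalar traces $\Ric^C,\scal^C$: the Kulkarni--Nomizu term $\kappa\owedge g$ of \eqref{eq:def_Kulkarni_Nomizu_product} contributes multiples of $\kappa$ and $(\tr\kappa)\,g$, while the traces of $\kappa^2$ introduce $\nabla^2\tr\kappa$, $\nabla^*\nabla\tr\kappa$ and $\nabla^*\nabla\kappa$, the last of which I eliminate through the Weitzenböck formula \eqref{eq:Wb} together with \eqref{eq:divergenz_versus_differential_der_spur}. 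The correction terms in $\widetilde\Ric^C$ and $\widetilde\scal^C$ of \eqref{eq:def_modified_Ricci_trace_und_tilde_scalC} are exactly what is needed to cancel these derivative contributions, and solving the resulting pair of linear relations for $\kappa$ and $(\tr\kappa)\,g$ yields $\kappa=\kappa^C$; the denominator $n-1$ comes from the final trace step and again uses $\dim M\ge 2$.

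For (b) I would invoke (a): $\kappa$ is Killing and satisfies \eqref{eq:curv_cond_1}, so by Proposition~\ref{p:main} the associated $S^\kappa$ is parallel on the cone. Passing to the compact Riemannian manifold $(M,-g)$, I substitute \eqref{eq:curv_cond_1} into the curvature term \eqref{eq:def_of_R*R} of \eqref{eq:Wb}, so that $q(R)\kappa$ becomes the explicit constant--curvature expression $a\,\kappa+b\,(\tr\kappa)\,g$. Pairing \eqref{eq:Wb} with $\kappa$, integrating over the compact manifold, and using \eqref{eq:divergenz_versus_differential_der_spur} to rewrite the $\nabla^2\tr\kappa$ term as a signed divergence, the negative definiteness of $g$ forces the definite curvature term and the Dirichlet term to have incompatible signs unless $\nabla\kappa=0$ and $\tr\kappa$ is constant; a parallel Killing tensor obeying \eqref{eq:curv_cond_1} is then a constant multiple of $g$. (Equivalently one may argue by holonomy: a compact $(M,-g)$ that is not a round sphere has irreducible cone, which carries no nontrivial parallel symmetrized curvature tensor, again giving $\kappa=c\,g$.) The remaining hazards are the numerical coefficients in the contraction yielding $\kappa^C$ and the precise signs in the integrated Weitzenböck formula under the negative--definite metric.
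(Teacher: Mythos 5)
Several of your stages are sound, and two of them genuinely streamline the paper: the projector argument for $C=C^\kappa$ is the same as in the paper's proof of Proposition~\ref{p:prolong} $(b)\Rightarrow(c)$ and indeed needs no Killing hypothesis; the diagonal evaluation $C(x,u,u,u)=0$ giving $\nabla^2_{x,u}\kappa(u,u)=0$, i.e.\ $\nabla\sigma=0$ for $\sigma:=\cyclic\,\nabla\kappa$, reproduces exactly Lemma~\ref{le:Bianchis_cyclic_sum_is_parallel}; your direct antisymmetrisation yielding \eqref{eq:curv_cond_1} is correct and slicker than the paper's route; and your observation that the proof of Proposition~\ref{p:prolong_3} invokes the Killing equation only through $\nabla^2\sigma$ is accurate, so \eqref{eq:prolong_3} does hold once $\sigma$ is parallel. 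The gap is at the crux, $\sigma=0$, and your stage-2 derivation of \eqref{eq:curv_cond_2} is in fact circular. Decompose $\nabla\kappa=\kappa^1+\tfrac13\sigma$. When you differentiate \eqref{eq:alg_curv_tensor_1}, substitute \eqref{eq:alg_curv_tensor_2} and antisymmetrise in $x,y$, \emph{every} algebraic term on the right lies in the $\bbS_{(2,1)}$-type: the symmetrizer in \eqref{eq:noch_ein_produkt} annihilates $\lambda\otimes\sigma$ for symmetric $\sigma$ (the column transposition $(2\,4)$ already kills it), $x\lrcorner(\nabla\kappa\owedge y^\sharp)$ is of type $(2,1)$ because $C(x,u,u,u)=0$ for any $C\in\bbS_{(2,2)}$, and $(\nabla_y\kappa)\owedge x^\sharp$ is of type $(2,1)$ by construction. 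Hence the $\Sym^3$-component of the identity reads $R_{x,y}\cdot\sigma=0$, \emph{not} $R_{x,y}\cdot\sigma=-(x\wedge y)\cdot\sigma$; since $\Sym^3$ contains no $\so(n)$-invariant vector, the full condition \eqref{eq:curv_cond_2} is equivalent to $\sigma=0$ — the very statement to be proved. Consequently the reduction $F^2(\kappa,R)=F^2(\kappa,R_1)$, which in Proposition~\ref{p:main} uses \eqref{eq:curv_cond_2} for the \emph{whole} of $\nabla\kappa$, acquires an untracked error term $L(\sigma)$, linear in $\sigma$: contributions such as $(x\lrcorner\sigma)\owedge y^\sharp-(y\lrcorner\sigma)\owedge x^\sharp$ survive in the $(2,1)$-part and do not cancel (test $\sigma=\lambda\bullet\lambda\bullet\lambda$). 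Your final identity is therefore $\tfrac13(x\lrcorner\sigma)\owedge g+L(\sigma)=0$, and the conclusion $\sigma=0$ does not follow without computing $L$ and proving injectivity of the combined equivariant map.

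Even granting the clean cancellation, your injectivity claim for $h\mapsto h\owedge g$ is false in dimension two: $\dim\bbS_{(2,2)}(\R^2)^*=1$ while $\dim\Sym^2(\R^2)^*=3$, the kernel being the trace-free symmetric $2$-tensors (the Ricci contraction of $h\owedge g$ is proportional to $(n-2)h+(\trace h)\,g$), so the map is injective only for $n\ge 3$; Remark~\ref{re:main_3} concerns $n=1$, not $n=2$, and your argument would at best give that $x\lrcorner\sigma$ is trace-free on a surface. This is precisely where the paper invests its real work: from $\sigma$ parallel it passes through the trace identities to irreducibility (Lemma~\ref{le:M_is_irreducible}), then through the Berger dichotomy — transitive holonomy kills the odd-degree invariant polynomial $x\mapsto\nabla_x\kappa(x,x)$, while the locally symmetric alternative is excluded via the $1$-nullity of $\grad\,\trace\kappa$ (Lemma~\ref{le:curvature_constancy}) — and a density argument. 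You need either that global holonomy ingredient or the explicit computation of $L$ together with an injectivity argument valid down to $n=2$. Your stage for $\kappa=\kappa^C$ and your main route for (b) are fine in outline (the paper argues with the strict negativity of $\nabla^*\nabla$, first on $\trace\kappa$ via \eqref{eq:Laplacian_of_f} and then on $n\kappa-c\,g$, which is essentially your Bochner pairing with the sign bookkeeping done for you), but your parenthetical holonomy alternative in (b) is not sound: the cone over a negative-definite $M$ is not Riemannian, so Gallot's irreducibility theorem does not apply, and irreducibility alone does not exclude parallel curvature tensors, as the Sasakian case shows.
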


In the previous proposition and hence also in Theorem~\ref{th:main_1} the condition  $\dim(M) > 1$ can not be neglected:

\bigskip
\begin{remark}\label{re:main_3}
On the real line $\R$ the pair $(\kappa,C)$ where $\kappa$ is a quadratic form with affine linear
coefficients and $C\;:=\;0$ matches~\eqref{eq:alg_curv_tensor_1}-\eqref{eq:alg_curv_tensor_2} of Theorem~\ref{th:main_1}. 
But $\kappa$ is neither Killing nor $\kappa = \kappa^C$ holds.
\end{remark}
For the proof of Proposition~\ref{p:get_rid}, we make the following general assumption for the rest of this section:

\begin{center}
 \flushleft
\begin{small}
\bf Let a pair $(\kappa,C)$ be given such that~\eqref{eq:alg_curv_tensor_1} and~\eqref{eq:alg_curv_tensor_2} hold for some constant $c$ different from zero. 
\end{small}
\end{center}
The proof of the previous proposition requires several lemmas.

\bigskip
\begin{lemma}\label{le:Bianchis_cyclic_sum_is_parallel}
The cyclic sum $\cyclic_{123} \nabla_{y_1} \kappa (y_2,y_3)$ is parallel, i.e. 
\begin{equation}\label{eq:Bianchis_cyclic_sum_is_parallel}
 \cyclic_{123} \nabla^2_{x,y_1} \kappa (y_2,y_3)\;\;=\;\;0
\end{equation}
for all $x,y_1,y_2,y_3\in T_pM$ and $p\in M$.
\end{lemma}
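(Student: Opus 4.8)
The plan is to prove the statement by showing directly that the completely symmetric part of $\nabla\kappa$ is parallel. Introduce the symmetric $3$-tensor $T := \cyclic_{123}\nabla\kappa$, i.e. $T(y_1,y_2,y_3) = \cyclic_{123}\nabla_{y_1}\kappa(y_2,y_3)$. Since $\cyclic_{123}$ is a fixed, constant-coefficient (hence $\nabla$-parallel) operation on $\bigotimes^3 T^*M$, one has $\nabla_x T(y_1,y_2,y_3) = \cyclic_{123}\nabla^2_{x,y_1}\kappa(y_2,y_3)$, so the assertion~\eqref{eq:Bianchis_cyclic_sum_is_parallel} is exactly $\nabla T = 0$. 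First I would feed in the general assumption~\eqref{eq:alg_curv_tensor_1}, rewritten as
\[
\nabla^2_{x,y_1}\kappa(y_2,y_3) \;=\; C(x,y_1,y_2,y_3) \;-\; 2\,\kappa\owedge x^\sharp(y_1,y_2,y_3),
\]
so that it suffices to show that both terms on the right cyclically symmetrize to zero in $(y_1,y_2,y_3)$ (with the first slot $x$ kept fixed).

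For the product term I would insert the explicit formula~\eqref{eq:product_of_a_one_form_and_a_symmetric_2-tensor} and expand the symmetrizer via~\eqref{eq:Young_(2,1)}; using the symmetry of $\kappa$ this gives
\[
\kappa\owedge x^\sharp(y_1,y_2,y_3) \;=\; \langle x,y_1\rangle\kappa(y_2,y_3) \;-\; \tfrac12\langle x,y_2\rangle\kappa(y_1,y_3) \;-\; \tfrac12\langle x,y_3\rangle\kappa(y_1,y_2).
\]
Cyclically summing over $(y_1,y_2,y_3)$, each of the three term types yields the same symmetric expression $P := \cyclic_{123}\langle x,y_1\rangle\kappa(y_2,y_3)$ (again by symmetry of $\kappa$), so the cyclic sum equals $P - \tfrac12 P - \tfrac12 P = 0$. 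This part is a routine verification.

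The heart of the argument is the vanishing of $\cyclic_{123}C(x,y_1,y_2,y_3)$, and this is where the algebraic symmetries of the symmetrized curvature tensor $C$ enter. Recall from Example~\ref{ex:Young_symmetrizer}~(c) that $C$ is symmetric in its first pair and in its last pair of slots, is pair-symmetric, and satisfies $C(u,u,u,v)=0$. Combining pair symmetry with the symmetry in the last pair gives $C(x,u,u,u) = C(u,u,x,u) = C(u,u,u,x) = 0$; polarizing this cubic identity in the last three slots shows that the full symmetrization of $C$ over slots $2,3,4$ (first slot fixed at $x$) vanishes. Using once more the symmetry in slots $3,4$, this full symmetrization equals twice the cyclic sum $\cyclic_{123}C(x,y_1,y_2,y_3)$, which is therefore zero.

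Combining the two vanishings yields $\nabla_x T = \cyclic_{123}C(x,\,\cdot\,,\,\cdot\,,\,\cdot\,) - 2\,\cyclic_{123}\kappa\owedge x^\sharp = 0$, which is~\eqref{eq:Bianchis_cyclic_sum_is_parallel}. I expect the only real subtlety to be the bookkeeping of the curvature symmetries in the third step (in particular deriving $C(x,u,u,u)=0$ from $C(u,u,u,v)=0$ via pair symmetry). It is worth noting that~\eqref{eq:alg_curv_tensor_2} is not needed for this lemma, which is consistent with the fact that on $\R$ the parallel tensor $T$ need not vanish, cf.~Remark~\ref{re:main_3}.
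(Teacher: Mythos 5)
Your proof is correct and takes essentially the same route as the paper: both apply $\cyclic_{123}$ to~\eqref{eq:alg_curv_tensor_1} and observe that the cyclic sums of $C(x,\,\cdot\,,\,\cdot\,,\,\cdot\,)$ and of $\kappa\owedge x^\sharp$ in the last three slots each vanish. The only difference is that you verify these two algebraic vanishings by hand (explicit expansion of the product, and polarization of $C(x,u,u,u)=0$ via pair symmetry), whereas the paper simply cites them as properties of symmetrized algebraic curvature tensors and of the symmetry type $\bbS_{\scaleto{\begin{array}{|c|c|}\cline{1-2} 2 & 3 \\ \cline{1-2} 1 & \multicolumn{1}{c}{} \\ \cline{1-1}\end{array}}{12pt}}T_p^*M$.
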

\begin{proof}
We have
\[
 \cyclic_{123} C(x,y_1,y_2,y_3)\;\;=\;\;0
\]
for every symmetrized algebraic curvature tensor. Further, $\cyclic_{123}\beta(x_1,x_2,x_3) = 0$ for every $\beta\in \bbS_{\scaleto{\begin{array}{|c|c|}
\cline{1-2}
2 & 3 \\
\cline{1-2}
1 \\
\cline{1-1} 
\end{array}}{12pt}}T_pM^*$. Therefore,
\begin{align*}
\cyclic_{123}\, \rmP_{\scaleto{\begin{array}{|c|c|}
\cline{1-2}
2 & 3 \\
\cline{1-2}
1 \\
\cline{1-1} 
\end{array}}{12pt}}\,\langle x, y_1\rangle \kappa (y_2,y_3)\;\;=\;\;0.
\end{align*} Thus~\eqref{eq:Bianchis_cyclic_sum_is_parallel} follows from~\eqref{eq:alg_curv_tensor_1}.
\end{proof}

\bigskip
\begin{lemma}
The algebraic Ricci tensor of $C$ and its scalar trace are given by
\begin{align}\label{eq:Ricci_spur_von_C_1}
\Ric^C(x,y) &\;\;=\;\;\nabla^2_{x,y}\trace \, \kappa + 2(\langle x,y\rangle \trace \, \kappa - \kappa(x,y))\\
\label{eq:Ricci_spur_von_C_2}
&\;\;=\;\;- \nabla^*\nabla \kappa(x,y) + 2(n-1) \kappa(x,y),\\
\label{eq:skalare_spur_von_C}
\scal^C &\;\;=\;\;-\nabla^*\nabla\trace \, \kappa + 2(n-1)\trace \, \kappa.
\end{align}
\end{lemma}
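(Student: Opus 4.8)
The plan is to obtain both displayed expressions for $\Ric^C$ as two different metric contractions of the defining relation~\eqref{eq:alg_curv_tensor_1}, exploiting that a symmetrized algebraic curvature tensor has the pair symmetry $C(u,v,w,z)=C(w,z,u,v)$ (the Bianchi-type consequence recorded for symmetrized curvature tensors in Example~\ref{ex:Young_symmetrizer}~(c)). Throughout I would work in a local orthonormal frame $\{e_i\}$ and read off the algebraic Ricci tensor as the contraction over a symmetric pair of slots, $\Ric^C(x,y):=\sum_i C(e_i,e_i,x,y)$, so that by pair symmetry also $\Ric^C(x,y)=\sum_i C(x,y,e_i,e_i)$ and $\scal^C:=\sum_i\Ric^C(e_i,e_i)$. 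The only nontrivial input is the explicit shape of the product term $2\,\kappa\owedge x^\sharp$ in~\eqref{eq:alg_curv_tensor_1}, which I evaluate through~\eqref{eq:product_of_a_one_form_and_a_symmetric_2-tensor} and the Young symmetrizer~\eqref{eq:Young_(2,1)}. Note that this computation does not use the Killing property of $\kappa$ (which is not yet available at this point), only the defining equation and the curvature symmetries of $C$.

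First I would prove~\eqref{eq:Ricci_spur_von_C_1} by contracting~\eqref{eq:alg_curv_tensor_1} over its last pair of slots, that is, computing $\sum_i C(x,y,e_i,e_i)$. The derivative term contributes $\sum_i\nabla^2_{x,y}\kappa(e_i,e_i)=\nabla^2_{x,y}\trace\,\kappa$, since the metric contraction commutes with $\nabla$. For the algebraic term I expand $\kappa\owedge x^\sharp=-\,x^\sharp\owedge\kappa$ via~\eqref{eq:product_of_a_one_form_and_a_symmetric_2-tensor} and~\eqref{eq:Young_(2,1)}, which gives
\[
x^\sharp\owedge\kappa(v_1,v_2,v_3)=\tfrac12\big(\langle x,v_2\rangle\kappa(v_1,v_3)+\langle x,v_3\rangle\kappa(v_1,v_2)-2\,\langle x,v_1\rangle\kappa(v_2,v_3)\big);
\]
setting $(v_1,v_2,v_3)=(y,e_i,e_i)$ and summing over $i$ yields $\kappa(x,y)-\langle x,y\rangle\trace\,\kappa$, so that $2\,\kappa\owedge x^\sharp$ contributes $2\big(\langle x,y\rangle\trace\,\kappa-\kappa(x,y)\big)$, which is precisely~\eqref{eq:Ricci_spur_von_C_1}.

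Next I would prove~\eqref{eq:Ricci_spur_von_C_2} by instead contracting~\eqref{eq:alg_curv_tensor_1} over its first pair of slots, i.e.\ $\sum_i C(e_i,e_i,x,y)$, which by pair symmetry equals the quantity just computed. Now the derivative term becomes $\sum_i\nabla^2_{e_i,e_i}\kappa(x,y)=-\nabla^*\nabla\kappa(x,y)$, while the same expansion of $\kappa\owedge e_i^\sharp$ evaluated at $(v_1,v_2,v_3)=(e_i,x,y)$ and summed over $i$ produces $2(n-1)\kappa(x,y)$, the factor $n$ entering through $\sum_i\langle e_i,e_i\rangle=n$. This is exactly~\eqref{eq:Ricci_spur_von_C_2}; in particular the equality of the two contractions encodes for free the Weitzenböck-type identity $\nabla^2_{x,y}\trace\,\kappa+\nabla^*\nabla\kappa(x,y)=2n\,\kappa(x,y)-2\langle x,y\rangle\trace\,\kappa$. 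Finally, taking the metric trace of~\eqref{eq:Ricci_spur_von_C_2} and using that $\nabla^*\nabla$ commutes with the metric contraction gives $\scal^C=-\nabla^*\nabla\trace\,\kappa+2(n-1)\trace\,\kappa$, establishing~\eqref{eq:skalare_spur_von_C}.

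The step I would be most careful about is the bookkeeping of the symmetrizer~\eqref{eq:Young_(2,1)} under the two metric contractions, together with the correct use of pair symmetry to guarantee that both traces genuinely compute the same tensor $\Ric^C$ rather than the two a priori distinct contractions (which for a symmetrized curvature tensor differ by a factor $-2$). The pseudo-Riemannian case requires no new idea: one only inserts the signs $\varepsilon_i=\langle e_i,e_i\rangle$ into the frame identities $\sum_i\varepsilon_i\langle e_i,v\rangle e_i=v$ and $\sum_i\varepsilon_i=n$, after which the computation proceeds verbatim.
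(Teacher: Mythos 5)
Your proposal is correct and takes essentially the same route as the paper, which likewise obtains \eqref{eq:Ricci_spur_von_C_1} by taking a suitable trace of \eqref{eq:alg_curv_tensor_1} (with \eqref{eq:Ricci_spur_von_C_2} given by the complementary trace and \eqref{eq:skalare_spur_von_C} following immediately). Your version merely spells out what the paper leaves implicit --- the explicit evaluation of $\kappa\owedge x^\sharp$ via \eqref{eq:product_of_a_one_form_and_a_symmetric_2-tensor} and \eqref{eq:Young_(2,1)}, and the use of pair symmetry to identify the two contractions of $C$ --- and all of this bookkeeping checks out, including your correct observation that the Killing property of $\kappa$ is not needed here.
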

\begin{proof}
For~\eqref{eq:Ricci_spur_von_C_1}, take a suitable trace of~\eqref{eq:alg_curv_tensor_1}. From this~\eqref{eq:skalare_spur_von_C} follows immediately.
\end{proof}

Let $\delta\kappa$ and $\trace \, \kappa$ denote the divergence and trace of $\kappa$. 

\bigskip
\begin{lemma}
We have
\begin{align}\label{eq:kovariante_Ableitung_des_Ricci_tensors_von_C}
- 3 \nabla_{x} \Ric^C(y_1,y_2) &\;\;=\;\;2 \cyclic_{12}\left \lbrace  \begin{array}{c}\big (\langle x,y_1\rangle \d \, \trace \, \kappa(y_2) - \nabla_{y_1} \kappa(y_2,x)\\
+\langle x,y_1\rangle \delta\kappa(y_2) \end{array} \right \rbrace + 4 \nabla_x \kappa(y_1,y_2),\\
\label{eq:differential_der_skalaren_spur_von_C}
 - 3\d \scal^C &\;\;=\;\;8\big (\d\, \trace \, \kappa + \delta\kappa \big ).
\end{align}
\end{lemma}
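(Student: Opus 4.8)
The plan is to read off both identities from the second structure equation~\eqref{eq:alg_curv_tensor_2} by taking the very same algebraic Ricci contraction that already produced~\eqref{eq:Ricci_spur_von_C_1}, namely the trace $\Ric^D(a,b)=\sum_i D(a,b,e_i,e_i)$ over the last pair of slots, with $e_1,\dots,e_n$ a local orthonormal frame. This is the clean route: differentiating~\eqref{eq:Ricci_spur_von_C_1} directly would introduce a third covariant derivative of $\trace\kappa$ that one would then have to remove, whereas~\eqref{eq:alg_curv_tensor_2} delivers $\nabla_x C$ algebraically. Since this contraction is linear and parallel it commutes with $\nabla$, so that $\nabla_x\Ric^C=\Ric^{\nabla_x C}$, and inserting~\eqref{eq:alg_curv_tensor_2} gives
\[
\nabla_x\Ric^C(y_1,y_2)\;\;=\;\;-\sum_i\bigl(\nabla\kappa\owedge x^\sharp\bigr)(y_1,y_2,e_i,e_i).
\]
Thus everything reduces to tracing the single product $\nabla\kappa\owedge x^\sharp$ over its last two arguments.

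Next I would expand this product by its definition~\eqref{eq:noch_ein_produkt}, so that $\nabla\kappa\owedge x^\sharp$ equals $\tfrac13$ times the Young symmetrizer written out in~\eqref{eq:Young_(2,2)} applied to $\langle x,v_1\rangle\,\nabla_{v_2}\kappa(v_3,v_4)$, and then substitute $(v_1,v_2,v_3,v_4)=(y_1,y_2,e_i,e_i)$ and sum over $i$. Because the last two arguments receive the \emph{same} frame vector, the two lower rows of~\eqref{eq:Young_(2,2)} coincide with the two upper rows after substitution, so the sixteen terms collapse to twice an eight-term block. Evaluating that block with the elementary identities $\sum_i\langle x,e_i\rangle e_i=x$, $\sum_i\kappa(e_i,e_i)=\trace\kappa$, $\sum_i\nabla_{e_i}\kappa(\,\cdot\,,e_i)=-\delta\kappa$ and the symmetry of $\kappa$ produces exactly the $\cyclic_{12}$-symmetric combination on the right of~\eqref{eq:kovariante_Ableitung_des_Ricci_tensors_von_C}; the prefactor $-3$ is precisely what cancels the $\tfrac13$ coming from~\eqref{eq:noch_ein_produkt}. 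I would stress that this step uses only the structure equation~\eqref{eq:alg_curv_tensor_2}, the symmetry of $\kappa$ and the product conventions, and in particular does not require $\kappa$ to be Killing.

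Finally, \eqref{eq:differential_der_skalaren_spur_von_C} follows by taking the metric trace of~\eqref{eq:kovariante_Ableitung_des_Ricci_tensors_von_C} in the pair $(y_1,y_2)$. On the left, $\trace_{12}\nabla_x\Ric^C=\nabla_x\scal^C=\d\scal^C(x)$. On the right, setting $y_1=y_2=e_j$ and summing turns each of $\langle x,y_1\rangle\,\d\trace\kappa(y_2)$ and $\nabla_x\kappa(y_1,y_2)$ into $\d\trace\kappa(x)$, and each of $\langle x,y_1\rangle\,\delta\kappa(y_2)$ and $-\nabla_{y_1}\kappa(y_2,x)$ into $\delta\kappa(x)$; counting the coefficients (two of each, doubled by the $\cyclic_{12}$) yields $8\bigl(\d\trace\kappa+\delta\kappa\bigr)$, which is~\eqref{eq:differential_der_skalaren_spur_von_C}.

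The only genuine obstacle is organisational: one must pin down the Ricci contraction as the trace over the last slot-pair (the choice forced by~\eqref{eq:Ricci_spur_von_C_1}) rather than a ``cross'' contraction, and keep the divergence-sign convention $\delta\kappa(y)=-\sum_i\nabla_{e_i}\kappa(y,e_i)$ consistent throughout. Once the collapse of the sixteen symmetrizer terms to an eight-term block is observed, the remaining computation is short and the factor $-3$ emerges automatically.
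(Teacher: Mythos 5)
Your proposal is correct and takes essentially the same route as the paper: the paper likewise proves \eqref{eq:kovariante_Ableitung_des_Ricci_tensors_von_C} by writing out the Young-symmetrized expansion of $\nabla\kappa\owedge x^\sharp$ from \eqref{eq:noch_ein_produkt} (whose sixteen terms indeed collapse pairwise after setting the last two arguments equal to the same frame vector), taking the trace of \eqref{eq:alg_curv_tensor_2} over the last slot pair, and then deduces \eqref{eq:differential_der_skalaren_spur_von_C} by tracing once more. Your handling of the factor $-3$, the contraction convention forced by \eqref{eq:Ricci_spur_von_C_1}, and the sign of $\delta\kappa$ all agree with the paper's computation, including the observation that the Killing property of $\kappa$ is not needed.
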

\begin{proof}
Recalling the
definition of the product $\owedge$ from~\eqref{eq:noch_ein_produkt}, the
negative of r.h.s. of~\eqref{eq:alg_curv_tensor_2} is explicitly given as a
Young symmetrized expression
\begin{align*}
& \langle x,x_1\rangle \nabla_{x_2} \kappa(x_3,x_4) - \langle x,x_3\rangle \nabla_{x_2} \kappa(x_1,x_4) - \langle x,x_1\rangle \nabla_{x_4} \kappa(x_3,x_2) + \langle x,x_3\rangle \nabla_{x_4} \kappa(x_1,x_2)\\
& \langle x,x_2\rangle \nabla_{x_1} \kappa(x_3,x_4) - \langle x,x_3\rangle \nabla_{x_1} \kappa(x_2,x_4) - \langle x,x_2\rangle \nabla_{x_4} \kappa(x_1,x_3) + \langle x,x_3\rangle \nabla_{x_4} \kappa(x_1,x_2)\\
&  \langle x,x_1\rangle \nabla_{x_2} \kappa(x_3,x_4) - \langle x,x_4\rangle \nabla_{x_2} \kappa(x_1,x_3) - \langle x,x_1\rangle \nabla_{x_3} \kappa(x_4,x_2) + \langle x,x_4\rangle \nabla_{x_3} \kappa(x_1,x_2)\\
& \langle x,x_2\rangle \nabla_{x_1} \kappa(x_3,x_4) - \langle x,x_4\rangle \nabla_{x_1} \kappa(x_2,x_3) - \langle x,x_2\rangle \nabla_{x_3} \kappa(x_1,x_4) + \langle x,x_4\rangle \nabla_{x_3} \kappa(x_1,x_2)
\end{align*}
Taking the trace with respect to $x_3$ and $x_4$ yields
\begin{align*}
& \langle x,x_1\rangle \nabla_{x_2} \kappa(e_i,e_i) - \langle x,e_i\rangle \nabla_{x_2} \kappa(x_1,e_i) - \langle x,x_1\rangle \nabla_{e_i} \kappa(e_i,x_2) + \langle x,e_i\rangle \nabla_{e_i} \kappa(x_1,x_2)\\
& \langle x,x_2\rangle \nabla_{x_1} \kappa(e_i,e_i) - \langle x,e_i\rangle \nabla_{x_1} \kappa(x_2,e_i) - \langle x,x_2\rangle \nabla_{e_i} \kappa(x_1,e_i) + \langle x,e_i\rangle \nabla_{e_i} \kappa(x_1,x_2)\\
& \langle x,x_1\rangle \nabla_{x_2} \kappa(e_i,e_i) - \langle x,e_i\rangle \nabla_{x_2} \kappa(x_1,e_i) - \langle x,x_1\rangle \nabla_{e_i} \kappa(e_i,x_2) + \langle x,e_i\rangle \nabla_{e_i} \kappa(x_1,x_2)\\
& \langle x,x_2\rangle \nabla_{x_1} \kappa(e_i,e_i) - \langle x,e_i\rangle \nabla_{x_1} \kappa(x_2,e_i) - \langle x,x_2\rangle \nabla_{e_i} \kappa(x_1,e_i) + \langle x,e_i\rangle \nabla_{e_i} \kappa(x_1,x_2)\\
\end{align*}
Thus~\eqref{eq:kovariante_Ableitung_des_Ricci_tensors_von_C} follows from~\eqref{eq:alg_curv_tensor_2}. Taking the trace of this yields~\eqref{eq:differential_der_skalaren_spur_von_C}.
\end{proof}

\bigskip
\begin{lemma}
We have
\begin{align}\label{eq:erste_vorstufe_zur_Bianchi_identity}
& 2\,\cyclic_{123} \nabla_{x_1} \kappa (x_2,x_3)\;\;=\;\;\left \lbrace \begin{array}{c} 3\nabla^3_{x_1,x_2,x_3}\trace \, \kappa +  6\, \d \, \trace \, \kappa(x_1) \langle x_2,x_3\rangle\\ 
+ 2\,\cyclic_{2,3}\langle x_1,x_2\rangle \delta\kappa(x_3)\\
 + 2\,\cyclic_{2,3}\langle x_1,x_2 \rangle\, \d \, \trace \, \kappa(x_3) \end{array} \right \rbrace.
\end{align} 
\end{lemma}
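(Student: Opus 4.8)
The plan is to compute the covariant derivative of the algebraic Ricci tensor $\Ric^C$ in two independent ways and compare. On one side, \eqref{eq:kovariante_Ableitung_des_Ricci_tensors_von_C} already expresses $-3\nabla_x\Ric^C$ as a Young-symmetrized combination of first derivatives of $\kappa$ together with $\d\trace\kappa$ and $\delta\kappa$. On the other side, differentiating the closed formula \eqref{eq:Ricci_spur_von_C_1} for $\Ric^C$ yields a second expression for $\nabla_x\Ric^C$ built from $\nabla^3\trace\kappa$ and $\nabla_x\kappa$. Equating the two and solving for the cyclic sum will produce \eqref{eq:erste_vorstufe_zur_Bianchi_identity}.

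First I would differentiate \eqref{eq:Ricci_spur_von_C_1}. Since the metric is parallel and $\trace\kappa$ is a function, the product rule gives
\begin{equation*}
\nabla_x\Ric^C(y_1,y_2)\;=\;\nabla^3_{x,y_1,y_2}\trace\,\kappa+2\langle y_1,y_2\rangle\,\d\trace\,\kappa(x)-2\nabla_x\kappa(y_1,y_2),
\end{equation*}
so that $-3\nabla_x\Ric^C(y_1,y_2)=-3\nabla^3_{x,y_1,y_2}\trace\,\kappa-6\langle y_1,y_2\rangle\,\d\trace\,\kappa(x)+6\nabla_x\kappa(y_1,y_2)$.

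Next I would set this equal to the right-hand side of \eqref{eq:kovariante_Ableitung_des_Ricci_tensors_von_C}, cancelling the explicit term $4\nabla_x\kappa(y_1,y_2)$ there against part of the $6\nabla_x\kappa(y_1,y_2)$ above. Expanding the cyclic sum $\cyclic_{12}$ (which swaps $y_1$ and $y_2$) and moving the two derivative contributions $-\nabla_{y_1}\kappa(y_2,x)$, $-\nabla_{y_2}\kappa(y_1,x)$ to the left, the surviving first-derivative terms assemble, using the symmetry of $\kappa$, into
\begin{equation*}
2\big(\nabla_x\kappa(y_1,y_2)+\nabla_{y_1}\kappa(y_2,x)+\nabla_{y_2}\kappa(y_1,x)\big)\;=\;2\,\cyclic_{123}\nabla_x\kappa(y_1,y_2).
\end{equation*}
Relabelling $(x,y_1,y_2)\to(x_1,x_2,x_3)$, and noting that $\cyclic_{12}$ in the $(y_1,y_2)$ variables becomes $\cyclic_{23}$, then collects the remaining $\langle\cdot,\cdot\rangle\,\d\trace\,\kappa$ and $\langle\cdot,\cdot\rangle\,\delta\kappa$ contributions with exactly the coefficients appearing in \eqref{eq:erste_vorstufe_zur_Bianchi_identity}.

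The argument is purely algebraic, so the only genuine obstacle is bookkeeping: one must track the cyclic symmetrizations carefully, keep the derivative slot of $\nabla^3\trace\,\kappa$ straight (it is symmetric in its last two arguments, being built from the Hessian of a function), and verify that the $\d\trace\,\kappa$ and $\delta\kappa$ terms coming from the two different sources combine with the stated coefficients $6$ and $2$. No geometric input beyond \eqref{eq:Ricci_spur_von_C_1} and \eqref{eq:kovariante_Ableitung_des_Ricci_tensors_von_C} is required.
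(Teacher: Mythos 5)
Your proposal is correct and is essentially the paper's own argument: the paper likewise solves \eqref{eq:kovariante_Ableitung_des_Ricci_tensors_von_C} for the cyclic sum $2\,\cyclic_{123}\nabla_{x_1}\kappa(x_2,x_3)$ and then substitutes the differentiated formula \eqref{eq:Ricci_spur_von_C_1} for $\Ric^C$, which is exactly your two-way computation of $\nabla_x\Ric^C$ performed in the opposite (equivalent) order. The bookkeeping you describe, including the cancellation $6\nabla_x\kappa - 4\nabla_x\kappa = 2\nabla_x\kappa$ and the relabelling turning $\cyclic_{12}$ into $\cyclic_{23}$, checks out against \eqref{eq:erste_vorstufe_zur_Bianchi_identity}.
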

\begin{proof}
Solving~\eqref{eq:kovariante_Ableitung_des_Ricci_tensors_von_C} for the cyclic sum of $\nabla \kappa$ yields:
\begin{align*}
& 2\,\cyclic_{123} \nabla_{x_1} \kappa (x_2,x_3)\;\;=\;\;\left \lbrace \begin{array}{c} 3\, \nabla_{x_1} \Ric^C(x_2,x_3) +  6\, \nabla_{x_1} \kappa(x_2,x_3)\\ 
+ 2\,\cyclic_{23}\langle x_1,x_2\rangle \delta  \kappa(x_3)\\
 + 2\,\cyclic_{23}\langle x_1,x_2 \rangle\, \d \, \trace \, \kappa(x_3) \end{array} \right \rbrace.
\end{align*}
Substituting now the formula~\eqref{eq:Ricci_spur_von_C_1} for the algebraic Ricci tensor of $C$ 
gives~\eqref{eq:erste_vorstufe_zur_Bianchi_identity}.
\end{proof}

\bigskip
\begin{lemma}\label{le:M_is_irreducible}
If $\kappa$ is not a constant multiple of the metric tensor, then $M$ is irreducible.
\end{lemma}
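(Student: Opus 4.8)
The plan is to prove the contrapositive: assuming $M$ is reducible, I would show that $\kappa$ is forced to be a constant multiple of $g$. By the de~Rham decomposition, reducibility means that the tangent bundle splits orthogonally into two nontrivial $\nabla$-parallel distributions $TM=\mathcal{D}_1\oplus\mathcal{D}_2$ (in the Riemannian case relevant here both summands are automatically nondegenerate). Locally $M$ is then a Riemannian product, so its curvature operator is block diagonal; in particular $R_{x,y}=0$ whenever $x\in\mathcal{D}_1$ and $y\in\mathcal{D}_2$.

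The first step is to feed in the algebraic nullity condition on $\kappa$. Even before knowing that $\kappa$ is Killing, equation~\eqref{eq:alg_curv_tensor_1} already yields~\eqref{eq:curv_cond_1}, i.e.\ $R_{x,y}\cdot\kappa=-\,x\wedge y\cdot\kappa$: since $C$ is a symmetrized algebraic curvature tensor it is symmetric in its first two arguments, so comparing $C(x,y,\,\cdot\,,\,\cdot\,)$ with $C(y,x,\,\cdot\,,\,\cdot\,)$ in~\eqref{eq:alg_curv_tensor_1} and applying the Ricci identity to $\nabla^2_{x,y}\kappa-\nabla^2_{y,x}\kappa$ expresses $R_{x,y}\cdot\kappa$ through the products $\kappa\owedge x^\sharp$ and $\kappa\owedge y^\sharp$; a short computation with~\eqref{eq:product_of_a_one_form_and_a_symmetric_2-tensor} identifies the result with $-\,x\wedge y\cdot\kappa$. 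Specialising to $x\in\mathcal{D}_1$, $y\in\mathcal{D}_2$, where $R_{x,y}=0$, this gives $x\wedge y\cdot\kappa=0$ for all such pairs.

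Writing out $x\wedge y\cdot\kappa$ via the derivation action~\eqref{eq:derivation} and testing against vectors from the two factors, I expect to read off that $\kappa$ is block diagonal and conformal to $g$ on each block with one common factor. Evaluating the identity at $u=v=x$ for a non-null $x\in\mathcal{D}_1$ (and $y\in\mathcal{D}_2$) produces $-2\langle x,x\rangle\,\kappa(x,y)=0$, so the off-diagonal block $\kappa(\mathcal{D}_1,\mathcal{D}_2)$ vanishes; evaluating at $u\in\mathcal{D}_1$, $v\in\mathcal{D}_2$ produces $\langle x,u\rangle\,\kappa(y,v)=\langle y,v\rangle\,\kappa(x,u)$, which forces $\kappa|_{\mathcal{D}_1}=\lambda\,g|_{\mathcal{D}_1}$ and $\kappa|_{\mathcal{D}_2}=\lambda\,g|_{\mathcal{D}_2}$ for one and the same function $\lambda$. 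Hence $\kappa=\lambda\,g$ pointwise.

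It remains to show that $\lambda$ is constant, and this I regard as the main obstacle, because reducibility alone is consistent with a flat $\mathbb{R}$-factor carrying a nonconstant affine $\lambda$, so it must be excluded by the finer hypotheses. Here I would use the two Bianchi-type consequences of the standing assumptions. Substituting $\kappa=\lambda g$ into Lemma~\ref{le:Bianchis_cyclic_sum_is_parallel} gives $\cyclic_{123}\Hess\lambda(x,y_1)\langle y_2,y_3\rangle=0$; testing with $y_2=y_3\perp y_1$ (possible since $\dim M\geq 2$) yields $\Hess\lambda=\nabla^2\lambda=0$, whence $\nabla^3\lambda=0$. Substituting $\kappa=\lambda g$ (so $\trace\kappa=n\lambda$ and $\delta\kappa=\mp\,\d\lambda$) into~\eqref{eq:erste_vorstufe_zur_Bianchi_identity} and comparing the coefficient of $\d\lambda(x_1)\langle x_2,x_3\rangle$ — for instance by choosing $x_1=\nabla\lambda$ and $x_2=x_3\perp\nabla\lambda$, which annihilates the divergence and third-derivative terms irrespective of sign conventions — produces the numerical identity $2=6n$, which is impossible. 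Therefore $\d\lambda=0$, $\lambda$ is constant and $\kappa=c\,g$, contradicting the hypothesis. This establishes the contrapositive and hence the lemma.
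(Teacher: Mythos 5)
Your proposal is correct, and it reaches the paper's conclusion by a route that genuinely differs in the middle step. Both arguments share the same skeleton: reducibility gives a local Riemannian product, one shows $\kappa=\lambda\,g$ pointwise for a function $\lambda$, and then the two Bianchi-type consequences of the standing assumption (Lemma~\ref{le:Bianchis_cyclic_sum_is_parallel} and~\eqref{eq:erste_vorstufe_zur_Bianchi_identity}) force $\lambda$ to be constant. Where you diverge is in how $\kappa=\lambda\,g$ is obtained: the paper never touches the curvature tensor, instead combining the commuting of mixed second covariant derivatives on a product with Lemma~\ref{le:Bianchis_cyclic_sum_is_parallel} to get $\nabla^2_{x,x}\kappa(x,y)=0$, and then exploiting the algebraic symmetries of $C$ in~\eqref{eq:alg_curv_tensor_1} --- the exchange rule $C(x,x,x,y)=0$ for the off-diagonal block and the pair symmetry $C(x,y,x,y)=C(y,x,x,y)$ for the equality of the conformal factors. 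You instead antisymmetrize~\eqref{eq:alg_curv_tensor_1} in its first two arguments and apply the Ricci identity to extract the nullity condition~\eqref{eq:curv_cond_1} directly; I checked this against~\eqref{eq:product_of_a_one_form_and_a_symmetric_2-tensor}, and the antisymmetrized $\owedge$-terms do equal $-x\wedge y\cdot\kappa$, so the step is sound even though you only sketch it. Feeding in $R_{x,y}=0$ for mixed directions then yields $x\wedge y\cdot\kappa=0$, from which your pointwise evaluations correctly read off block-diagonality and the common conformal factor. This buys something the paper does not have at this stage: a cone-free, purely pointwise proof that (b) of Theorem~\ref{th:main_1} implies~\eqref{eq:curv_cond_1}, which the paper only obtains via Propositions~\ref{p:prolong} and~\ref{p:main}; the price is a dependence on Ricci-identity and wedge sign conventions, which however cancels in your application since you use only the vanishing of $x\wedge y\cdot\kappa$. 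Your endgame also differs harmlessly: you deduce $\Hess\lambda=0$ by testing Lemma~\ref{le:Bianchis_cyclic_sum_is_parallel} with $y_2=y_3\perp y_1$ (the paper phrases the same fact as ``$(n+2)\,\d f$ is parallel''), and you kill $\d\lambda$ by evaluating~\eqref{eq:erste_vorstufe_zur_Bianchi_identity} at $x_1=\nabla\lambda$, $x_2=x_3\perp\nabla\lambda$, getting the impossible $2=6n$, whereas the paper sets $x_1=x_2=x_3$ and obtains $(n-1)\,\d f=0$; both evaluations are valid since reducibility forces $n\geq 2$. One caveat you share with the paper's own proof: the pointwise tests presuppose non-null (unit) vectors, so the argument as written is for definite signature, which is indeed the setting in which the lemma is applied; in indefinite signature one would have to argue on the dense set of non-null directions.
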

\begin{proof}
Assume by contradiction that $M$ is not irreducible. Then $M$ is the product $M_1 \times M_2$ of two Riemannian manifolds of dimensions at least one.
Let $(p,q)\in M_1 \times M_2$ and $(x,y)\in T_pM_1\oplus T_qM_2$. 
Since $x$ and $y$ are tangent to different factors, the covariant derivatives commute $\nabla^2_{y,x}\kappa  =  \nabla^2_{x,y}\kappa$. Using~\eqref{eq:Bianchis_cyclic_sum_is_parallel}, we thus have
\begin{align*}
0 &\stackrel{\eqref{eq:Bianchis_cyclic_sum_is_parallel}}{=} \nabla^2_{y,x}\kappa(x,x)\;\;=\;\; \nabla^2_{x,y}\kappa(x,x)\stackrel{\eqref{eq:Bianchis_cyclic_sum_is_parallel}}{=} - 2\,\nabla^2_{x,x}\kappa(x,y)
\end{align*}
for all $x\in T_pM_1$ and $y\in T_qM_2$. Hence, the exchange rule $C(x,x,x,y) = 0$ implies from~\eqref{eq:alg_curv_tensor_1} that
\begin{align*}
0 \;\;=\;\; - \langle x, x\rangle \kappa (x,y) + \underbrace{\langle x, y\rangle \kappa (x,x)}_{=0}
\end{align*}
We conclude that $ \kappa (x,y) = 0$, i.e. $\kappa = \kappa_1 + \kappa_2$ where $\kappa_1$ and $\kappa_2$ are the restrictions of 
$\kappa$ to $T_pM_1\times T_pM_1$ and $T_qM_2\times T_qM_2$, respectively. Further, the symmetry $C(x,y,x,y) = C(y,x,x,y)$ 
and the commuting of the covariant derivatives
$\nabla^2_{x,y}\kappa(x,y) = \nabla^2_{y,x}\kappa(x,y)$ yields
\begin{equation*}
\langle x, x\rangle \kappa (y,y)\;\;=\;\;\langle y, y\rangle \kappa (x,x)
\end {equation*}
Hence $\kappa(x,x) = \kappa(y,y)$ for all unit vectors $x\in T_pM_1$ and $y\in T_qM_2$. 
Thus there exists a function $f \colon M_1\times M_2 \to \R_+$  such that
$\kappa_1(x,x) = f(p,q) = \kappa_2(y,y)$ for all unit vectors $x$ and $y$, i.e. $\kappa =  f\langle\ ,\ \rangle$.  Inserting this into~\eqref{eq:Bianchis_cyclic_sum_is_parallel} implies that $(n + 2)\d f$ is parallel, hence
$\nabla \d f = 0$. Substituting this into~\eqref{eq:erste_vorstufe_zur_Bianchi_identity} yields
\begin{align*}
& \cyclic_{123}\, \d_{x_1}f \langle x_2,x_3 \rangle\;\;=\;\;3\,n\, \langle x_2,x_3\rangle \d_{x_1}f + (n-1) \cyclic_{23}\langle x_1,x_2 \rangle\, \d_{x_3}f.
\end{align*}
Evaluating this equation with $x_1 = x_2 = x_3$ implies that $(n - 1) \d f = 0$. Therefore if $n > 1$ then $\kappa$ is a constant multiple of the metric tensor. We conclude that $M$ is irreducible.
\end{proof}

\bigskip
\begin{corollary}\label{co:contracted_Bianchi_identity}
If $\dim(M) \geq  2$, then the contraction~\eqref{eq:divergenz_versus_differential_der_spur} of the Killing equation holds.
\end{corollary}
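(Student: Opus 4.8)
The plan is to leverage the parallelism of the ``Bianchi'' cyclic sum from Lemma~\ref{le:Bianchis_cyclic_sum_is_parallel}. I set $B(y_1,y_2,y_3):=\cyclic_{123}\nabla_{y_1}\kappa(y_2,y_3)$, a fully symmetric covariant $3$-tensor, and recall that \eqref{eq:Bianchis_cyclic_sum_is_parallel} is precisely the statement $\nabla B=0$. Since the metric is parallel, every metric contraction of $B$ is parallel as well, so I would contract $B$ over its last two slots. With the divergence normalised by $\delta\kappa(x)=-\sum_i\nabla_{e_i}\kappa(e_i,x)$ (so that \eqref{eq:divergenz_versus_differential_der_spur} reads $\d\,\trace\,\kappa=2\,\delta\kappa$), using $\sum_i\nabla_x\kappa(e_i,e_i)=\d\,\trace\,\kappa(x)$ and the symmetry of $\kappa$ one obtains
\begin{equation*}
\sum_i B(x,e_i,e_i)\;\;=\;\;\d\,\trace\,\kappa(x)\,-\,2\,\delta\kappa(x).
\end{equation*}
Thus the $1$-form $W:=\d\,\trace\,\kappa-2\,\delta\kappa$ is parallel, and \eqref{eq:divergenz_versus_differential_der_spur} is equivalent to $W=0$.

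It then remains to see that this parallel $1$-form vanishes, and I would split into two cases according to the dichotomy of Lemma~\ref{le:M_is_irreducible}. If $\kappa$ is a constant multiple of the metric tensor, then $\trace\,\kappa$ is constant and $\delta\kappa=0$, so $W=0$ at once. Otherwise Lemma~\ref{le:M_is_irreducible} shows that $M$ is irreducible; since an irreducible Riemannian manifold admits no nonzero parallel vector field --- dually, no nonzero parallel $1$-form, as such a form would split off a line via the (local) de~Rham decomposition --- we again conclude $W=0$. In either case $\d\,\trace\,\kappa=2\,\delta\kappa$, which is the asserted contraction \eqref{eq:divergenz_versus_differential_der_spur}.

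The computational part (the trace identity) is entirely routine; the conceptual point, and the only place where $\dim(M)\ge2$ is really used, is the step from ``$W$ parallel'' to ``$W=0$'', which rests on Lemma~\ref{le:M_is_irreducible}. I expect this to be the main subtlety: in dimension one the lemma fails and $W$ need not vanish (compare Remark~\ref{re:main_3}), whereas for $\dim(M)\ge2$ irreducibility removes exactly the parallel $1$-forms that could otherwise obstruct the identity. In effect Lemma~\ref{le:Bianchis_cyclic_sum_is_parallel} promotes the classical Killing-tensor identity --- usually read off by setting the completely symmetrised derivative to zero --- to the present weaker hypotheses, where $B$ is only parallel, the residual freedom being eliminated by irreducibility.
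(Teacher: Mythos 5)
Your proposal is correct and follows essentially the same route as the paper: you contract the parallel cyclic sum of Lemma~\ref{le:Bianchis_cyclic_sum_is_parallel} to obtain the parallel $1$-form $\d\,\trace\,\kappa - 2\,\delta\kappa$, dispose of the case $\kappa = c\,g$ directly, and otherwise invoke the irreducibility of $M$ from Lemma~\ref{le:M_is_irreducible} to force the parallel $1$-form to vanish. The only cosmetic difference is that the paper phrases the last step as the holonomy group acting fixed point freely on tangent vectors, which is the same mechanism as your de~Rham splitting argument.
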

\begin{proof} 
We can assume that $\kappa$ is not a constant multiple of the metric tensor,
since otherwise~\eqref{eq:divergenz_versus_differential_der_spur} is obvious. Then, on
the one hand, $M$ is irreducible by the previous lemma and hence the holonomy
group acts fixed point free on tangent vectors,  because $\dim(M)\geq 2$. On
the other hand, since the contraction
of~\eqref{eq:Bianchis_cyclic_sum_is_parallel} implies that the 1-form
$\lambda\;:=\;\d \, \trace \, \kappa - 2 \delta \kappa$ is parallel, the latter is fixed under the holonomy group at each point. We conclude that $\lambda = 0$.
\end{proof}

\bigskip
\begin{corollary}
If $\dim(M) \geq  2$, then we have
\begin{align}\label{eq:zweite_vorstufe_zur_Bianchi_identity}
2 \cyclic_{123} \nabla_{y_2} \kappa (y_1,y_3) &\;\;=\;\;\left \lbrace \begin{array}{c} 3\nabla^3_{y_1,y_2,y_3}\trace \, \kappa +  6\, \d \, \trace \, \kappa(y_1) \langle y_2,y_3\rangle  + 3\,\cyclic_{23}\langle y_1,y_2 \rangle\, \d \, \trace \, \kappa(y_3) \end{array} \right \rbrace,\\
\label{eq:vergleich_der_spur_von_kappa_mit_der_skalaren_kruemmung_von_C}
\d\scal^C &\;\;=\;\;-4\, \d\, \trace \, \kappa.
\end{align}
\end{corollary}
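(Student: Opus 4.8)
The plan is to obtain both identities of the corollary by eliminating the divergence $\delta\kappa$ from the two earlier formulas~\eqref{eq:erste_vorstufe_zur_Bianchi_identity} and~\eqref{eq:differential_der_skalaren_spur_von_C} by means of the contracted Killing equation. Indeed, since $\dim(M)\geq 2$, Corollary~\ref{co:contracted_Bianchi_identity} supplies~\eqref{eq:divergenz_versus_differential_der_spur}, that is $\delta\kappa = \tfrac{1}{2}\,\d\,\trace\,\kappa$. Both remaining steps are then a pure matter of substitution and collecting terms; no further geometric input is needed.

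For~\eqref{eq:zweite_vorstufe_zur_Bianchi_identity}, I would insert $\delta\kappa = \tfrac{1}{2}\,\d\,\trace\,\kappa$ into the right hand side of~\eqref{eq:erste_vorstufe_zur_Bianchi_identity}. The summand $2\,\cyclic_{23}\langle x_1,x_2\rangle\,\delta\kappa(x_3)$ then becomes $\cyclic_{23}\langle x_1,x_2\rangle\,\d\,\trace\,\kappa(x_3)$, which combines with the already present $2\,\cyclic_{23}\langle x_1,x_2\rangle\,\d\,\trace\,\kappa(x_3)$ to give $3\,\cyclic_{23}\langle x_1,x_2\rangle\,\d\,\trace\,\kappa(x_3)$. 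The two remaining summands on the right hand side are unaffected, so the right hand side of~\eqref{eq:erste_vorstufe_zur_Bianchi_identity} turns exactly into that of~\eqref{eq:zweite_vorstufe_zur_Bianchi_identity}. On the left hand side one only has to observe that $\cyclic_{123}\nabla_{x_1}\kappa(x_2,x_3)$ is a totally symmetric $3$-tensor in its arguments (because $\kappa$ is symmetric), so that the relabelling $\cyclic_{123}\nabla_{x_1}\kappa(x_2,x_3)=\cyclic_{123}\nabla_{y_2}\kappa(y_1,y_3)$ is harmless.

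For~\eqref{eq:vergleich_der_spur_von_kappa_mit_der_skalaren_kruemmung_von_C}, I would substitute the same relation into~\eqref{eq:differential_der_skalaren_spur_von_C}. This turns $8\,(\d\,\trace\,\kappa + \delta\kappa)$ into $8\,(\d\,\trace\,\kappa + \tfrac{1}{2}\,\d\,\trace\,\kappa) = 12\,\d\,\trace\,\kappa$, so that $-3\,\d\scal^C = 12\,\d\,\trace\,\kappa$ and therefore $\d\scal^C = -4\,\d\,\trace\,\kappa$, as claimed.

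There is essentially no obstacle here beyond the bookkeeping: the whole substantive content of the corollary is packaged into Corollary~\ref{co:contracted_Bianchi_identity}, whose own proof is where the hypothesis $\dim(M)\geq 2$ was genuinely used (irreducibility from Lemma~\ref{le:M_is_irreducible} together with the fixed-point-free action of the holonomy group). Once~\eqref{eq:divergenz_versus_differential_der_spur} is in hand, the two displayed identities follow by the elementary substitutions described above.
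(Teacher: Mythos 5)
Your proposal is correct and is essentially the paper's own proof: the authors likewise obtain both identities by substituting the contracted Killing equation~\eqref{eq:divergenz_versus_differential_der_spur} (supplied by Corollary~\ref{co:contracted_Bianchi_identity} under the hypothesis $\dim(M)\geq 2$) into~\eqref{eq:erste_vorstufe_zur_Bianchi_identity} and~\eqref{eq:differential_der_skalaren_spur_von_C}, respectively. Your explicit bookkeeping, including the observation that $\cyclic_{123}\nabla_{x_1}\kappa(x_2,x_3)$ is totally symmetric so the relabelling of arguments is harmless, only spells out details the paper leaves implicit.
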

\begin{proof} This follows by substituting~\eqref{eq:divergenz_versus_differential_der_spur} into~\eqref{eq:erste_vorstufe_zur_Bianchi_identity} and~\eqref{eq:differential_der_skalaren_spur_von_C}, respectively.
\end{proof}

Let $R_1$ denote the algebraic curvature tensor defined by
\begin{equation}
R_1(x,y,z)\;:=\;- \langle x , z \rangle y + \langle y , z \rangle x
\end{equation}
for all $x,y,z\in T_pM$. If $g$ is positive definite, then this is the curvature tensor of a round
sphere of radius one.

Recall that the $1$-nullity of a pseudo Riemannian manifold is the subspace of $T_pM$ defined by
\begin{equation}\label{eq:curvature_constancy}
\scrC_1(T_pM)\;:=\;\{x\in T_pM| R(x,y,z)= R_1(x,y,z)\}.
\end{equation}
Hence a vector $x$ belongs to $\scrC_1(T_pM)$ if and only if the sectional curvature of any two-plane in $T_pM$ containing $x$ is equal to one (see~\cite{Gray1},~\cite{MoSe1}).
For example, a Sasakian vector field $\xi$ belongs at each point to the
$1$-nullity distribution. 

In general, $\scrC_1(TM)\;:=\;\bigcup_{p\in M}\scrC_1(T_pM)$ is not a subbundle of $TM$. However, for a symmetric space $\nabla R = 0$ implies that $\scrC_1(TM)$ is invariant under parallel translation and hence a parallel vector subbundle of $TM$.

\bigskip
\begin{lemma}\label{le:curvature_constancy}
If $\dim(M) \geq  2$, then the gradient of $\trace \, \kappa$ belongs to the 1-nullity $\scrC_1(TM)$ everywhere.
\end{lemma}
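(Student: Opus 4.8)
The plan is to read off the $1$-nullity condition algebraically from the third-order identity \eqref{eq:zweite_vorstufe_zur_Bianchi_identity}, by antisymmetrizing it in its first two arguments and then invoking the Ricci identity. Write $f := \trace\,\kappa$. First I would observe that the left-hand side of \eqref{eq:zweite_vorstufe_zur_Bianchi_identity}, namely $\cyclic_{123}\nabla_{y_2}\kappa(y_1,y_3)$, equals the totally symmetric tensor $\cyclic_{123}\nabla_{y_1}\kappa(y_2,y_3)$ and is in particular invariant under the interchange $y_1\leftrightarrow y_2$. Hence, antisymmetrizing the whole identity \eqref{eq:zweite_vorstufe_zur_Bianchi_identity} in the first two slots annihilates the left-hand side, while on the right the two contributions $\langle y_1,y_2\rangle\,\d f(y_3)$ cancel and the remaining metric terms combine.

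After dividing by $3$, the surviving relation is
\[
\nabla^3_{y_1,y_2,y_3}f - \nabla^3_{y_2,y_1,y_3}f \;=\; -\,\d f(y_1)\langle y_2,y_3\rangle + \d f(y_2)\langle y_1,y_3\rangle
\]
for all $y_1,y_2,y_3\in T_pM$. The next step is to identify the left-hand side with curvature: the Ricci identity applied to the $1$-form $\d f$ gives $\nabla^3_{y_1,y_2,y_3}f - \nabla^3_{y_2,y_1,y_3}f = -\,\d f(R_{y_1,y_2}y_3)$, exactly in the sign convention already used in Section~\ref{se:prolongation}. Combining the two displays yields the compact curvature identity
\[
\d f(R_{y_1,y_2}y_3)\;=\;\d f(y_1)\langle y_2,y_3\rangle - \d f(y_2)\langle y_1,y_3\rangle .
\]

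Finally I would rewrite the left-hand side as $\langle R_{y_1,y_2}y_3,\grad f\rangle$ and apply the pair symmetry $\langle R_{y_1,y_2}y_3,\grad f\rangle = \langle R_{y_3,\grad f}y_1,y_2\rangle$ together with $R_{y_3,\grad f} = -R_{\grad f,y_3}$. Reading off the identity in the free slot $y_2$ then gives $R_{\grad f,y_3}y_1 = \langle y_1,y_3\rangle\grad f - \langle \grad f, y_1\rangle y_3 = R_1(\grad f,y_3)y_1$ for all $y_1,y_3$, which is precisely the assertion $\grad f\in\scrC_1(T_pM)$ of \eqref{eq:curvature_constancy}. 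I would point out that the hypothesis $\dim M\ge 2$ enters only through the availability of \eqref{eq:zweite_vorstufe_zur_Bianchi_identity}, and that the parallelism of $\cyclic_{123}\nabla\kappa$ from Lemma~\ref{le:Bianchis_cyclic_sum_is_parallel} is in fact \emph{not} required here, since only the symmetry of that tensor in its first two arguments is used. The main obstacle is purely bookkeeping: keeping the sign of the derivation action and of the Ricci identity consistent, and matching the resulting skew operator against the normalization of $R_1$ in \eqref{eq:curvature_constancy}; there is no analytic difficulty once the antisymmetrization is carried out correctly.
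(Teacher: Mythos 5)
Your proof is correct and takes essentially the same route as the paper: both extract the curvature condition from the identity \eqref{eq:zweite_vorstufe_zur_Bianchi_identity} by exploiting the total symmetry of its left-hand side together with the Ricci identity for $\d f$. The only difference is organizational: the paper cyclically symmetrizes and then specializes to $y_2=y_3=y$ (implicitly recovering the full nullity condition via the curvature symmetries and polarization), whereas your antisymmetrization in $y_1\leftrightarrow y_2$ produces the full three-slot identity $\d f(R_{y_1,y_2}y_3)=\d f(y_1)\langle y_2,y_3\rangle-\d f(y_2)\langle y_1,y_3\rangle$ directly, so that reading off $R_{\grad f,\,y}z=R_1(\grad f,y,z)$ by pair symmetry is immediate.
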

\begin{proof}
Let $f := \trace \, \kappa$. Since l.h.s. of~\eqref{eq:zweite_vorstufe_zur_Bianchi_identity} is completely
symmetric in $u_1,u_2,u_3$, the same is true for the right hand side of that
equation. Hence
\begin{align*}
& 3\, \big (\nabla^3_{y_1,y_2,y_3}f + 2\, \langle y_2,y_3\rangle \d f(y_1)
+ \cyclic_{23}\langle y_1,y_2\rangle \d f(y_3) \big )\\
&\;\;=\;\; \cyclic_{123} \big (\nabla^3_{y_1,y_2,y_3}f + 2\, \langle y_2,y_3\rangle \d f(y_1)
+ \cyclic_{23}\langle y_1,y_2\rangle  \d f(y_3) \big )\\
&\;\;=\;\;\cyclic_{123}\big (\nabla^3_{y_1,y_2,y_3}f +  4\,\langle y_1,y_2\rangle  \d f(y_3)\big )\\
&\;\;=\;\;\nabla^3_{y_1,y_2,y_3}f + \nabla^3_{y_3,y_1,y_2}f + \nabla^3_{y_2,y_3,y_1}f  + 4\,\cyclic_{123}\,\langle y_1,y_2\rangle \d f(y_3).
\end{align*}
Using the symmetry of the Hessian $\nabla\d f$, we have
\begin{align*}
 \nabla^3_{y_3,y_1,y_2}f &\;\;=\;\;\nabla^2_{y_1,y_3}\d f(y_2) + R_{y_3,y_1} \d f(y_2)\;\;=\;\;\nabla^3_{y_1,y_2,y_3}f + R_{y_3,y_1} \d f(y_2),\\
 \nabla^3_{y_2,y_3,y_1}f &\;\;=\;\; \nabla^3_{y_2,y_1,y_3}f\;\;=\;\;\nabla^2_{y_1,y_2,y_3}f + R_{y_2,y_1} \d f(y_3).
\end{align*}
Further,
\begin{equation*}
 6\, \langle y_2,y_3\rangle \d f(y_1)
+ 3\,\cyclic_{23}\langle y_1,y_2\rangle \d f(y_3) - 4\,\cyclic_{123}\,\langle y_1,y_2\rangle \d f(y_3)\;\;=\;\;2\, \langle y_2,y_3\rangle
\d f(y_1) - \cyclic_{23}\langle y_1,y_2\rangle \d f(y_3)
\end{equation*}
We conclude that
\begin{align*}
& 2\, \langle y_2,y_3\rangle
\d f(y_1) - \cyclic_{23}\langle y_1,y_2\rangle  \d f(y_3)
\;\;=\;\;R_{y_3,y_1}  \d f(y_2) + R_{y_2,y_1} \d f(y_3) .
\end{align*}
Setting $y_1 := x$ and $y_2 := y_3 := y$ we obtain that
\begin{equation*}
2\, \d f(R(x,y,y))\;\;=\;\;2\, \langle y,y\rangle
\d f(x) - 2\, \langle x,y\rangle \d f(y).
\end{equation*}
This shows that the gradient of  $\trace \, \kappa$ belongs to $\scrC_1(TM)$.
\end{proof}

\begin{proof} [Proof of Proposition~\ref{p:get_rid}~(a)]
Suppose that~\eqref{eq:alg_curv_tensor_1}-\eqref{eq:alg_curv_tensor_2} hold for a pair $(\kappa,C)$ and that $\dim(M)\geq 2$. 
We show that the cyclic sum $\cyclic_{xyz} \nabla_x\kappa(y,z)$ vanishes at
each point $p\in M$. We distinguish two cases:
If there exists an open neighbourhood around $p$ where $\trace \, \kappa$ is constant, then $\kappa$ is Killing on this neighbourhood according
to~\eqref{eq:zweite_vorstufe_zur_Bianchi_identity}. 

Suppose there exists a point $p\in M$ such that $\d \, \trace \, \kappa|_p\neq 0$. 
In particular, $\kappa$ is not a constant multiple of the metric tensor on any open neighbourhood of $p$.
Therefore each neighbourhood of $p$ in $M$ is irreducible according to Lemma~\ref{le:M_is_irreducible}. Hence we can assume that $M$ is simply connected and irreducible. Further, recall that the cyclic sum $\cyclic_{xyz}\nabla_x\kappa(y,z)$ is a parallel symmetric 3-tensor according to Lemma~\ref{le:Bianchis_cyclic_sum_is_parallel}. Therefore if the holonomy group of $M$ at $p$ acts transitively on the unit sphere of $T_pM$,
 then the corresponding homogeneous polynomial function $f(x) := \nabla_x\kappa(x,x)$ with $x\in T_pM$ is constant on vectors of equal length. 
Since the  degree of $f$ is odd, we see that $f(x) = f(-x) = - f(x)$ for all $x\in T_pM$ . Therefore $\cyclic_{xyz} \nabla_x\kappa(y,z)|_p = 0$. 
If, by contradiction, we assume that the holonomy group acts non-transitively, then $M$ is locally symmetric,  $\nabla R = 0$, 
according to Bergers classification of holonomy groups~\cite[5.21]{Ba}. Hence
the $1$-nullity~\eqref{eq:curvature_constancy} is a parallel  subbundle of
$TM$. Since $\scrC_1(T_pM)\neq \{0\}$ because of
Lemma~\ref{le:curvature_constancy}, it is non-trivial and thus $\scrC_1(TM) = TM$ by the irreducibility of $M$. This implies that $M$ has constant
sectional curvature, in particular the holonomy group is $\SO(n)$ which acts
transitively on the unit sphere of $T_pM$. But this is contrary to our assumptions.

So far we have shown that  $\cyclic_{xyz}\nabla_x\kappa(y,z)|_p = 0$ at
all points $p$ of $M$ where $\d \, \trace \, \kappa|_p \neq 0$ or $\trace \, \kappa$ is constant on an open neighbourhood of $p$. Obviously the set of these points is dense in $M$. Since moreover the set points where the cyclic sum $\cyclic_{xyz}\nabla_x\kappa(y,z)$ vanishes is closed, we conclude that $\kappa$ is Killing.

We finish the proof of~(a) of Proposition~\ref{p:get_rid} by showing that automatically $\kappa = \kappa^C$: we use~\eqref{eq:Ricci_spur_von_C_1} and~\eqref{eq:vergleich_der_spur_von_kappa_mit_der_skalaren_kruemmung_von_C} to obtain that
\begin{align*}
\widetilde \Ric^C(x,y) \stackrel{\eqref{eq:def_modified_Ricci_trace_und_tilde_scalC}}{=} \Ric^C(x,y) + \frac{1}{4} \nabla^2_{x,y}\scal^C
                   \stackrel{\eqref{eq:Ricci_spur_von_C_1},\eqref{eq:vergleich_der_spur_von_kappa_mit_der_skalaren_kruemmung_von_C}}{=} 2 \big (\langle x,y\rangle \trace \, \kappa - \kappa(x,y)\big ).
\end{align*}
Hence
\begin{equation*}
\widetilde\scal^C\;\;=\;\;2(n - 1)\trace \, \kappa.
\end{equation*}
Inserting the previous into~\eqref{eq:def_kappaC} shows that $\kappa = \kappa^C$. This finishes the proof of Part~(a) of  Proposition~\ref{p:get_rid}.
\end{proof}

\begin{proof} [Proof of Proposition~\ref{p:get_rid}~(b)]
Let a pair $(\kappa,C)$ with~\eqref{eq:alg_curv_tensor_1}-\eqref{eq:alg_curv_tensor_2} on a compact manifold $M$ 
with negative definite metric tensor $g$ be given. We claim that $\kappa$ is a constant multiple of $g$: 
If $M$ is a 1-dimensional sphere, then the space of algebraic curvature tensors is trivial. 
Hence~\eqref{eq:prolong_2_speziell} and~\eqref{eq:prolong_3_speziell} together
show that the third covariant derivative of the pullback $\pi^*\kappa$ of
$\kappa$ under the canonical covering $\pi:\R\to M$ vanishes. 
This implies polynomial growth of $\pi^*\kappa$. But $\pi^*\kappa$ is periodic, hence the previous forces that $\pi^*\kappa$ is constant, 
i.e. $\kappa$ is trivial. Hence we can assume that $\dim(M)\geq 2$. Using that $\kappa$ is Killing, we conclude
from~\eqref{eq:zweite_vorstufe_zur_Bianchi_identity} that $f\;:=\;\trace \, \kappa$ satisfies Gallots equation~\eqref{eq:Gallots_Gleichung}. Taking in this equation the trace with respect to $y_1$ and $y_2$, we obtain for the rough Laplacian
\begin{equation}\label{eq:Laplacian_of_f}
\nabla^*\nabla \ df \;\;=\;\; (n + 3)  \d f.
\end{equation}
 Since on a compact manifold with negative definite metric the rough Laplacian $\nabla^*\nabla$ is a strictly negative operator, we
 conclude that $\d\, \trace \, \kappa = 0$, i.e. $\trace \, \kappa$ is a constant $c$.
Using~\eqref{eq:Ricci_spur_von_C_1} and~\eqref{eq:Ricci_spur_von_C_2} we thus see that 
\begin{equation*}
\nabla^*\nabla \big (n\, \kappa - c\, g\big )\;\;=\;\;2\, \, (n\, \kappa
- c\,g\big ).
\end{equation*} Using again the compactness of $M$, 
we conclude that $\kappa = \frac{c}{n}\, g$.
This shows that $\kappa$ is a constant multiple of the metric tensor.
\end{proof}

\section{Proof of Theorem~\ref{th:main_2}}
\label{se:sasaki}
For the proof of Theorem~\ref{th:main_2}, it remains to analyze the space of parallel algebraic curvature tensors on the cone over a Riemannian manifold. 
This is purely a question of holonomy. Gallot showed that the cone over a complete Riemannian manifold is irreducible unless the universal covering of $M$ is isometric to a round sphere. Moreover, the cone $\hat M^{n+1}$ can never be Einstein unless it is Ricci-flat, since the curvature tensor $\hat R$ is purely horizontal according to~\eqref{eq:hat_R_1}. In the following we can assume that $M$ is simply connected and complete. 

The remaining possible holonomy algebras are according to Bergers classification~\cite[5.21]{Ba}: $\so(n + 1)$, $\u(m)$ or $\su(m)$
for $n + 1 = 2 m$ and $m \ge 2$, $\sp(m)$ for $n + 1 = 4 m$ and $m \ge 2$ or $\mathfrak{g}_2$ for $n + 1= 7$
    or $\spin_7$ for $n + 1= 8$.
Holonomy algebras $\su(m)$, $\u(m)$  and  $\sp(m)$ of the cone
correspond to (Einstein)-Sasakian and 3-Sasakian manifolds, respectively.  For a Kähler 
manifold with Kähler form $\omega$, the Cartan product $\omega\odot\omega$
clearly is a non-trivial parallel algebraic curvature tensor and similar every
Hyperkähler manifold with Hyperkähler structure  $\{\omega_1,\omega_2,\omega_3\}$
admits six linearly independent non-trivial parallel algebraic curvature
tensors $\omega_i \odot \omega_j$.

By the holonomy principle every parallel section
of the vector bundle of algebraic curvature tensors
on $\hat M$ corresponds to an element of the vector space of algebraic
curvature tensors on some tangent space that is fixed by the holonomy
algebra. For the proof of Theorem~\ref{th:main_2}, we thus have to show that
\begin{itemize}
\item the actions of $\frakg_2$, $\spin_7$ and  $\so(n+1)$ on algebraic curvature tensors in dimensions $7$, $8$ and $n+1$ each only have a one-dimensional trivial component,
\item the trivial component of $\su(m)$ and $\u(m)$ acting on $\scrC(\R^{2m})$ is 2-dimensional, 
\item and the trivial component of  $\sp(m)$  on $\scrC(\R^{4m})$ is seven-dimensional.
\end{itemize}

 By irreducibility of $\hat M$, every symmetric 2-tensor is a multiple of the metric, i.e. the space of traceless symmetric 2-tensors $\Sym^2(V^*)_0$ remains irreducible over the holonomy algebra. The next two lemmas will show that the space of algebraic curvature tensors $\scrC(T)$ remains irreducible over  $\spin_7$ or $\lieG2$. 
This finishes the proof of the classification of Riemannian manifolds matching the statement of Theorem~\ref{th:main_2}.

The following result can be found in~\cite[(4.7)]{KQ}:

\bigskip
\begin{lemma} \label{lem:sasaki:g2_trivial}
  Let $V$ be an irreducible $\so(7)$ representation that contains a
  $\lieG2$-trivial subspace, then there is a $k \in \I{N}$ such that $V$ is
  the $k$-fold Cartan product of the spin representation $\Delta$. That is $V = \Delta^{\odot k}$.
  In particular, the $\so(7)$-representation space space of Weyl tensors $\scrW(V)$ yields no further trivial component when restricted to $\lieG2$.
\end{lemma}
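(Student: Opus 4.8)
The plan is to read the hypothesis representation-theoretically: a $\lieG2$-trivial subspace of $V$ is exactly the space $V^{\lieG2}$ of $\lieG2$-invariants (equivalently $\G2$-fixed vectors, as $\G2$ is connected), so the assertion becomes that $V^{\lieG2}\neq 0$ forces $V=\Delta^{\odot k}$ for some $k\in\N$. First I would write $V=V_\lambda$ for the irreducible $\so(7)$-module of highest weight $\lambda=a_1\omega_1+a_2\omega_2+a_3\omega_3$, where $\omega_3$ is the spin weight (so $\Delta=V_{\omega_3}$), and introduce the set $\scrS$ of dominant weights $\lambda$ with $V_\lambda^{\lieG2}\neq 0$. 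Since the Cartan product of two invariants is again an invariant, $\scrS$ is closed under addition, i.e.\ a sub-monoid of the dominant weight monoid; identifying its generators therefore identifies every admissible $V$.

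The facts I would feed in are the low-degree branchings of $\so(7)\downarrow\lieG2$. The defining property of the inclusion $\lieG2\subset\so(7)$ is that the spin module restricts as $\Delta|_{\lieG2}=\mathbf 7\oplus\mathbf 1$; hence $\omega_3\in\scrS$. Moreover the standard and adjoint modules restrict as $\mathbf 7|_{\lieG2}=\mathbf 7$ and $\Lambda^2\mathbf 7|_{\lieG2}=\mathbf{14}\oplus\mathbf 7$, neither containing a trivial summand, so $\omega_1,\omega_2\notin\scrS$. Thus among the fundamental weights only $\omega_3$ is spherical, which pins $\omega_3$ down as the only candidate generator of $\scrS$.

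The hard part will be to show that $\scrS$ contains nothing beyond the multiples of $\omega_3$, and this is the step I expect to be the crux. The clean input is that $(\Spin(7),\G2)$ is a spherical pair of rank one: since $\Spin(7)$ acts transitively on the unit sphere of $\Delta$ with stabiliser $\G2$, one has $\Spin(7)/\G2\cong S^7$, and Frobenius reciprocity identifies $\dim V_\lambda^{\lieG2}$ with the multiplicity of $V_\lambda$ in $C^\infty(S^7)$. The decomposition $C^\infty(S^7)=\bigoplus_{k\ge0}\Delta^{\odot k}$, each summand occurring exactly once, then shows simultaneously that every $V_\lambda^{\lieG2}$ is at most one-dimensional and that $\scrS=\N\,\omega_3$; equivalently, the monoid of $\lieG2$-spherical weights is free on the single generator $\omega_3$. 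Establishing this multiplicity-freeness is the only real obstacle. Alternatively, and closer to~\cite{KQ}, one computes the full branching rule $\so(7)\downarrow\lieG2$ explicitly and reads off that the trivial $\lieG2$-module occurs in $V_{a_1\omega_1+a_2\omega_2+a_3\omega_3}$ with multiplicity $1$ if $a_1=a_2=0$ and $0$ otherwise; here the bookkeeping of that multiplicity is the labour. Either way $V_\lambda^{\lieG2}\neq 0$ forces $\lambda=k\omega_3$, that is $V=\Delta^{\odot k}$.

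Finally, the ``in particular'' falls out at once: the $\so(7)$-module of Weyl tensors $\scrW$ is irreducible with highest weight $2\omega_2$, which is not an integral multiple of $\omega_3$; hence by the statement just proved $\scrW^{\lieG2}=0$, so restricting $\scrW$ to $\lieG2$ contributes no further trivial component.
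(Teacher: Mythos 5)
Your proposal is correct in substance, but it takes a genuinely different route from the paper: the paper does not prove Lemma~\ref{lem:sasaki:g2_trivial} at all, it simply quotes the explicit $\so(7)\downarrow\lieG2$ branching rule of King and Qubanchi \cite[(4.7)]{KQ}, from which one reads off that the trivial $\lieG2$-module occurs (with multiplicity one) precisely in the representations of highest weight $k\omega_3$ --- your ``alternative'' at the end is exactly the paper's argument. Your main route is instead geometric: $\Spin(7)$ acts transitively on the unit sphere of $\Delta$ with stabiliser $\G2$, so $\Spin(7)/\G2\cong S^7$, and Frobenius reciprocity turns the invariant-counting problem into decomposing $L^2(S^7)=\bigoplus_{k\ge 0}H^k$ into spherical harmonics. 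This buys more than the table lookup: it explains conceptually why the answer is exactly the Cartan powers of $\Delta$, and it yields the sharper statement $\dim V_\lambda^{\lieG2}\le 1$. Two caveats, both of which you essentially flag yourself. First, the monoid remark: the Cartan projection of a product of two nonzero invariants is invariant but could a priori vanish, so closure of $\scrS$ under addition needs the standard argument that highest weight vectors multiply without zero divisors in the coordinate ring of $G/H$, an integral domain; since your final argument never uses the monoid structure (nor the logically insufficient observation that only $\omega_3$ among the fundamental weights is spherical), no harm results. Second, the genuine crux --- that each $H^k$ restricted to $\Spin(7)$ stays irreducible and equals $\Delta^{\odot k}$, making $C^\infty(S^7)$ multiplicity-free --- is left as a known fact; it is of comparable weight to the branching table the paper cites, but it is worth noting it can be closed by precisely the triality device the paper uses to prove the companion Lemma~\ref{lem:sasaki:spin7_trivial}: $H^k\cong V_{k\epsilon_1}$ is carried by the triality automorphism to ${\Delta^+}{}^{\odot k}$, whose classical branching from $\so(8)$ to the standard $\so(7)$ is irreducible with highest weight $k\omega_3$ by the interlacing rule, so your missing step reduces to classical branching. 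Your treatment of the ``in particular'' is fine: the Weyl module has highest weight $2\omega_2$, which is not a multiple of $\omega_3$, hence carries no $\lieG2$-invariants.
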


The result for $\spin_7 \subset \so(8)$ is similar. Let $\Delta^{\pm}$ be
the spin representations of $\so(8)$. Take a non-vanishing element $s \in
\Delta^+$ and consider its isotropy algebra $\lie{h} = \set{X\suchthat X s =
0} \subset \so(8)$. Then $\lie{h} = \spin_7$ is isomorphic to the holonomy
algebra of a manifold $M$ with $\Spin_7$ holonomy \cite[Chapter IV,
§10]{LM}.
Note that changing orientation on $M$ interchanges the spin representations
$\Delta^+$ and $\Delta^-$ \cite[Theorem~3.6.1]{Jo}.

\bigskip
\begin{lemma} \label{lem:sasaki:spin7_trivial}
  Let $V$ be an irreducible $\so(8)$ representation that contains a
  $\spin_7$-trivial subspace, then there is a $k \in \I{N}$ such that $V$ is
  the $k$-fold Cartan product of the spin representation $\Delta^+$. That is
  $V = {\Delta^+}^{\odot k}$.
\end{lemma}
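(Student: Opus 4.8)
The plan is to recognise the pair $(\Spin_8,\spin_7)$ as a Gelfand pair and to read off the $\spin_7$-invariants of any irreducible $\so(8)$-module from the classical harmonic decomposition of functions on a sphere. By definition $\spin_7 = \set{X \in \so(8) \suchthat X s = 0}$ for a fixed nonzero spinor $s \in \Delta^+$, and $\Spin_8$ acts on $\Delta^+ \cong \R^8$ through the surjection $\Spin_8 \to \SO(\Delta^+)$. Since $\SO(8)$ is transitive on the unit sphere of $\R^8$, the $\Spin_8$-orbit of a unit spinor $s$ is the sphere $S^7 \subset \Delta^+$, whose stabiliser is the connected group $\Spin_7$ integrating $\spin_7$. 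Hence $\Spin_8/\Spin_7 \cong S^7$.

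First I would invoke Frobenius reciprocity: for an irreducible $\so(8)$-representation $V$, the dimension of the invariant subspace $V^{\spin_7}$ equals the multiplicity of $V$ in $L^2(\Spin_8/\Spin_7) = L^2(S^7)$. Here I use that $\Spin_7$ is connected, so that $\spin_7$-invariants coincide with $\Spin_7$-invariants, and that for $D_4$ the longest Weyl element acts as $-\Id$, making all $\so(8)$-representations self-dual; thus no distinction between $V$ and $V^*$ enters the multiplicity count.

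Next I would bring in the classical decomposition $L^2(S^7) = \bigoplus_{k\ge 0} \mathcal{H}_k$, where $\mathcal{H}_k$ is the space of degree-$k$ harmonic polynomials on the eight-dimensional orthogonal space $\Delta^+$. This family is multiplicity-free, and each $\mathcal{H}_k$ is the irreducible module $\Sym^k_0\Delta^+$ of traceless symmetric tensors, which is exactly the $k$-fold Cartan product $(\Delta^+)^{\odot k}$, its highest weight being $k$ times the highest weight $\omega_4$ of $\Delta^+$. Combining this with the reciprocity count, $V^{\spin_7} \neq 0$ forces $V \cong (\Delta^+)^{\odot k}$ for some $k \in \I{N}$, and conversely each Cartan power carries a one-dimensional invariant space (the constants on $S^7$ in degree $k$). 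This is precisely the asserted characterisation.

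The one point to justify with care is the identification $\mathcal{H}_k \cong (\Delta^+)^{\odot k}$ together with the multiplicity-freeness of $L^2(S^7)$. Both become transparent once one notes that the action of $\Spin_8$ on $\Delta^+$ realises it as a double cover of the rotation group $\SO(\Delta^+)$, so that the standard theory of spherical harmonics applies verbatim with $\R^8$ replaced by $\Delta^+$; triality gives the conceptual reason that this action is equivalent to the defining vector action, carrying the vector highest weight $\omega_1$ to $\omega_4$ and hence the usual harmonics $\Sym^k_0\R^8$ to $(\Delta^+)^{\odot k}$. An alternative, more computational route would mirror the proof of Lemma~\ref{lem:sasaki:g2_trivial} by restricting $V$ through $\spin_7$ and analysing weights directly, but the Gelfand-pair argument avoids any case distinction and is the approach I would carry out.
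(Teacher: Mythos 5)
Your proposal is correct, but it takes a genuinely different route from the paper. The paper argues entirely through triality and branching tables: since the triality automorphism $\sigma$ satisfies $\so(7)=\sigma^{-1}(\spin^+_7)$, branching an $\so(8)$-representation to $\spin^+_7$ is the same as branching its $\sigma$-twist to the standard $\so(7)$; the classical branching rules (Fulton--Harris (25.34)--(25.35)) then force the highest weight to be $(k,0,0,0)$, i.e.\ the Cartan power of the vector representation, and Knapp's results that $\sigma$ permutes fundamental weights and hence commutes with Cartan products transport this to ${\Delta^+}^{\odot k}$. You instead realise $S^7=\Spin_8/\Spin_7$ as the orbit of the fixed spinor $s$, apply Peter--Weyl/Frobenius reciprocity (correctly handling the two needed subtleties: self-duality of all $D_4$-representations since $w_0=-\Id$, and connectedness of the stabiliser, which indeed follows from the homotopy sequence of the fibration $\Spin_7\to\Spin_8\to S^7$), and read the answer off the multiplicity-free harmonic decomposition $L^2(S^7)=\bigoplus_{k}\Sym^k_0\Delta^+$. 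The two arguments are essentially Frobenius-dual to one another: the branching statement that only $\Sym^k_0\R^8$ contains a nonzero $\so(7)$-invariant vector \emph{is}, by reciprocity, the spherical-harmonics decomposition of $L^2(\SO(8)/\SO(7))$. What your packaging buys is independence from branching tables and from the bookkeeping that $\sigma$ respects Cartan products; what the paper's buys is uniformity with the neighbouring lemmas (the $\lieG2\subset\so(7)$ case via King--Qubanchi and the $\U(m)$, $\Sp(m)$ cases via Howe--Tan--Willenbring), all of which are handled by branching rules. Note that you still need the identification $\Sym^k_0\Delta^+={\Delta^+}^{\odot k}$, which you can get from triality as you indicate, or even more directly from the observation that the highest weight vector of $\Delta^+\otimes\C$ is isotropic, so its $k$-th symmetric power is traceless and exhibits the weight $k\omega_4$ inside the irreducible space of harmonics. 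One cosmetic slip in a parenthetical: the one-dimensional $\Spin_7$-invariant subspace of $\mathcal{H}_k$ is spanned by the zonal harmonic centred at $s$, not by ``constants in degree $k$''; but this converse direction is not needed for the lemma in any case.
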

\begin{proof}
  The fundamental representations of $\so(8)$ are the standard representation
  $\hat\kappa \colon\ \so(8) \to \End(\I{R}^8)$, the adjoint representation
  $\ad \colon\ \so(8) \to \End \parens{\Exterior^2 \I{R}^8}$ and the spinor
  representations $\hat\kappa^{\pm} \colon \so(8) \to \End(\I{R}^8)$.
  Furthermore the triality automorphism $\sigma$  of $\so(8)$ is an outer automorphism
  of order three that interchanges the representations $\hat\kappa$ and
  $\hat\kappa^\pm$, that is $\hat\kappa \circ \sigma = \hat\kappa^-$ and $\hat\kappa^- \circ \sigma =
  \hat\kappa^+$, while $\ad \circ \sigma$ is equivalent to $\ad$~\cite[Chapter 1 §8]{LM},~\cite[§20.3]{FH}.
  The image of the standard embedding of $\so(7) \subset \so(8)$ is the
  isotropy algebra
  \begin{equation}
    \so(7)\;\;=\;\;\set{X \in \so(8) \suchthat \hat\kappa(X) e_1 = 0}
  \end{equation}
  while representatives of the other two conjugacy classes of $\spin_7$ are
  given as
  \begin{equation}
    \spin^\pm_7
   \;\;=\;\;\set{X \in \so(8) \suchthat \hat\kappa^{\pm}(X) e_1 = 0}.
  \end{equation}
  As described above, taking $\hat\kappa^+$ yields the holonomy algebra of a manifold
  with holonomy $\Spin_7$.
  Because $\hat\kappa = \hat\kappa^+ \circ \sigma$ it immediately follows that $\so(7) =
  \sigma^{-1}(\spin^+_7)$. By that, the branching rules of an $\so(8)$-representation 
$\tau \colon\ \so(8) \to \End(V)$ to the subalgebra
  $\spin^+_7$ are the same as the branching rules of $\tau \circ \sigma$ to
  $\so(7)$.  The latter rules are stated in. 
  It follows from~\cite[(25.34),(25.35)]{FH} that the highest weight
  of an $\so(8)$-irreducible representation $V$ containing a non-trivial
  $\so(7)$-trivial subspace is necessarily of the form $(k,0,0,0)$ so $V$ is the
  $k$-fold Cartan product $T^{\odot k}$ of the standard representation.
  On the other hand, the automorphism $\sigma$ permutes the fundamental weights
  and hence maps simple roots to simple roots. By \cite[Theorem~5.5
  (a)]{Kn} and the formula given in \cite[Theorem~5.5 (d)]{Kn},
  $\sigma$ maps every highest weight of a representation to a highest weight.
  Hence, the automorphism commutes with the Cartan product.
  This proves that every $\so(8)$-irreducible representation $V$ containing
  a $\spin^+_7$-trivial subspace has to be ${\Delta^+}^{\odot k}$.
\end{proof}

It is left to calculate the multiplicity of the $H$-trivial
subrepresentation in $\scrC(\hat T) $ for the remaining holonomy
algebras and compare it to the number of parallel sections obtained by taking
Cartan products of $S_\circ$ and the Kähler forms $\omega_I$, $\omega_J$ and
$\omega_K$. This can be done by the branching rules given in \cite{Ho}.
For this we have to consider the complexification
$\scrC(\hat T)  \otimes \I{C} = \scrC(\hat T\otimes \C)$ which is a $\GL(n+1, \I{C})$ representation.
If the holonomy $H$ of the cone $\hat M$ is $\U(m)$ or
$\SU(m)$ with $n+1 = 2m$, then the complexifications of $H$-representations
are $\GL(m,\I{C})$- or $\SL(m,\I{C})$-representations where
$\GL(m, \I{C}) \subset \Sp(2m, \I{C}) \subset \GL(2m, \I{C})$. If $H = \Sp(m)$
with $4m=n+1$, then complexifications of $\Sp(m)$-representations are
$\Sp(2m,\I{C})$-representations where $\Sp(2m,\I{C}) \subset \GL(2m,\I{C})
\subset \Sp(4m,\I{C}) \subset \GL(4m,\I{C})$. The Branching rules for $\GL(m, \I{C})\subset\Sp(2m,\I{C})$ and 
$\Sp(2m, \I{C}) \subset \GL(2m, \I{C})$ can be found in~\cite{Ho}. From these one easily deduces the next two lemmas:

\bigskip
\begin{lemma} \label{lem:sasaki:um}
  For $m \ge 2$, consider the embeddings $\GL(m,\I{C}) \subset \Sp(2m,\I{C})
  \subset \GL(2m,\I{C})$. There are two copies of the trivial $\GL(m,\I{C})$-representation contained in
  $\scrC(\hat T\otimes \C)$. Furthermore, if $m>2$, any other
  non-trivial one-dimensional $\GL(m,\I{C})$-representation is not
  contained in $\scrC(\hat T)$ so that this branching rule is
  the same for $\SL(m,\I{C})$.
\end{lemma}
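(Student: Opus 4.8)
The plan is to compute directly the space of $\GL(m,\C)$-invariants in $\scrC(\hat T\otimes\C)$, and then its one-dimensional $\GL(m,\C)$-subrepresentations. First I would use Example~\ref{ex:Young_symmetrizer}~(c): the space of symmetrized algebraic curvature tensors is the irreducible Schur module attached to the partition $(2,2)$, so writing $V:=\hat T\otimes\C$ and $U:=\C^m$ we have $\scrC(V)\cong\bbS_{(2,2)}V$ (up to dualising, which does not affect the count), with $V=U\oplus U^*$ as a $\GL(m,\C)$-module, the splitting being the one induced by $\GL(m,\C)\subset\Sp(2m,\C)$. Thus parallel curvature tensors correspond to $\GL(m,\C)$-fixed vectors in $\bbS_{(2,2)}(U\oplus U^*)$, and the extra one-dimensional pieces to the determinant powers occurring there.

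The central computation is the Cauchy/Littlewood--Richardson decomposition of a Schur functor on a direct sum (cf.~\cite{FH}),
\begin{equation*}
\bbS_{(2,2)}(U\oplus U^*)\;\;\cong\;\;\bigoplus_{\mu,\nu}c^{(2,2)}_{\mu\nu}\,\bbS_\mu U\otimes\bbS_\nu U^*,
\end{equation*}
after which I would list the finitely many pairs $(\mu,\nu)$ with $|\mu|+|\nu|=4$ and $c^{(2,2)}_{\mu\nu}\neq0$. A brief tableau count gives exactly six, each with coefficient one: $(\emptyset,(2,2))$, $((2,2),\emptyset)$, $((2),(2))$, $((1,1),(1,1))$, $((1),(2,1))$ and $((2,1),(1))$ (for $m\ge2$ the constraints $\ell(\mu),\ell(\nu)\le m$ discard nothing from this list, and the total dimensions check against $\dim\scrC(\C^{2m})$). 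Since $\bbS_\mu U\otimes\bbS_\nu U^*$ contains the trivial module exactly when $\mu=\nu$ (it is then $\End(\bbS_\mu U)$, with the trivial appearing once), only $((2),(2))$ and $((1,1),(1,1))$ contribute invariants; this yields the asserted two trivial $\GL(m,\C)$-representations, realised geometrically by $g\owedge g$ and by the squared Kähler form $\omega\owedge\omega$.

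For the remaining assertion I would decide, summand by summand, which $\det^{k}$ occur. From $\Hom_{\GL(m,\C)}(\det^{k},\bbS_\mu U\otimes\bbS_\nu U^*)\cong\Hom(\bbS_{\nu+(k^{m})}U,\bbS_\mu U)$ one reads off that $\det^{k}$ appears if and only if the differences $\mu_i-\nu_i$ (with zero-padding to length $m$) are all equal to $k$. Scanning the six pairs, for $m>2$ the only constant difference is $k=0$, attained solely by $((2),(2))$ and $((1,1),(1,1))$, so no nontrivial one-dimensional $\GL(m,\C)$-module sits inside $\scrC$; since determinant powers restrict to the trivial $\SL(m,\C)$-module, the $\GL$- and $\SL$-branchings then coincide. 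For $m=2$, by contrast, the pairs $((2,1),(1))$, $((1),(2,1))$ and $((2,2),\emptyset)$, $(\emptyset,(2,2))$ do satisfy the constancy condition and contribute $\det^{\pm1}$ and $\det^{\pm2}$, which is exactly why the hypothesis $m>2$ is needed. (Branching instead first through $\Sp(2m,\C)$ via~\cite{Ho} — where $\bbS_{(2,2)}$ restricts to the three symplectic irreducibles labelled $(2,2)$, $(1,1)$ and $\emptyset$ — leads to the identical count.)

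The delicate point, and the one I would be most careful about, is the last step: the degree bookkeeping $|\mu|-|\nu|=km$ is only necessary, not sufficient, for $\det^{k}$ to occur — at $m=4$ it would spuriously predict a determinant representation inside the irreducible module $\bbS_{(2,2)}U^*$. The correct criterion is the \emph{slotwise} constancy of $\mu_i-\nu_i$, and managing the zero-padding conventions so that precisely the $m=2$ degeneracies emerge while nothing spurious appears at $m=4$ is where the real care lies.
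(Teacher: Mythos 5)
Your argument is correct, but it reaches the lemma by a genuinely different route than the paper. The paper gives essentially no computation: it restricts $\scrC(\hat T\otimes \C)=\bbS_{(2,2)}\,\C^{2m}$ in two stages, first from $\GL(2m,\I{C})$ to $\Sp(2m,\I{C})$ (Littlewood restriction, yielding the symplectic constituents labelled $(2,2)$, $(1,1)$, $\emptyset$ that you mention only parenthetically) and then from $\Sp(2m,\I{C})$ to $\GL(m,\I{C})$, quoting the branching tables of~\cite{Ho} and asserting that the lemma is easily deduced from them. You bypass the intermediate symplectic group entirely: writing $V=U\oplus U^*$ for the Siegel--Levi embedding and decomposing $\bbS_{(2,2)}(U\oplus U^*)\cong\bigoplus_{\mu,\nu} c^{(2,2)}_{\mu\nu}\,\bbS_\mu U\otimes\bbS_\nu U^*$. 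Your list of the six constituents is correct (each with Littlewood--Richardson coefficient one, none discarded by the length bound for $m\ge 2$; the dimension check works, e.g.\ $2+9+1+8=20$ for $m=2$ and $12+36+9+48=105$ for $m=3$), the invariant count via $\mu=\nu$ gives exactly the two trivial copies, and your slotwise criterion $\mu_i-\nu_i\equiv k$ (with zero-padding) for the occurrence of $\det^k$ is the right one -- including the warning that the naive degree count $|\mu|-|\nu|=km$ is only necessary, and the explicit $m=2$ degeneracies $\det^{\pm 1},\det^{\pm 2}$, which is precisely where the hypothesis $m>2$ enters. What your route buys: it is self-contained and valid uniformly in $m$, whereas the rules in~\cite{Ho} are \emph{stable} branching rules, so a reader of the paper must separately check applicability exactly at the small values of $m$ where the lemma's dichotomy lives; your approach also makes explicit which one-dimensional pieces arise at $m=2$, consistent with $\SU(2)\simeq\Sp(1)$ as remarked after the lemma. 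What the paper's route buys is brevity and uniformity with Lemma~\ref{lem:sasaki:spm}, which is settled by the same pair of tables; since you verified parenthetically that branching through $\Sp(2m,\I{C})$ reproduces your count, the two computations are reconciled.
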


\bigskip
\begin{lemma} \label{lem:sasaki:spm}
  For $m \ge 2$ and $\Sp(2m,\I{C}) \subset \GL(2m,\I{C}) \subset \Sp(4m,\I{C})
  \subset \GL(4m,\I{C})$ there are seven copies of the trivial $\Sp(2m,\I{C})$-representation contained in $\scrC(\hat T\otimes \C)$.
\end{lemma}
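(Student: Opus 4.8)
The plan is to compute the dimension of the space of $\Sp(2m,\C)$-invariants in $\scrC(\hat T\otimes\C)$, since by the holonomy principle this equals the number of parallel algebraic curvature tensors on a $4m$-dimensional hyperkähler cone. After complexifying, the space of (symmetrized) algebraic curvature tensors is the irreducible $\GL(4m,\C)$-module $\bbS_{(2,2)}(\C^{4m})$ of highest weight $(2,2)$ by Example~\ref{ex:Young_symmetrizer}~(c), so I want $\dim\bigl(\bbS_{(2,2)}(\C^{4m})\bigr)^{\Sp(2m,\C)}$. I would obtain this by branching down the chain $\Sp(2m,\C)\subset\GL(2m,\C)\subset\Sp(4m,\C)\subset\GL(4m,\C)$ using the two Littlewood-type restriction rules recorded in~\cite{Ho}.

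First I would restrict the $\GL(4m,\C)$-module $\bbS_{(2,2)}(\C^{4m})$ to $\Sp(4m,\C)$. By Littlewood's rule the multiplicity of the irreducible $V^{\Sp}_\mu$ equals $\sum_\delta c^{(2,2)}_{\mu\,\delta}$, the sum running over partitions $\delta$ all of whose columns have even length; since $|\mu|+|\delta|=4$ and the even-column $\delta$ of size at most $4$ are $\emptyset,(1,1),(2,2),(1^4)$, a short Littlewood--Richardson computation yields $\bbS_{(2,2)}(\C^{4m})|_{\Sp(4m,\C)} = V^{\Sp}_{(2,2)}\oplus V^{\Sp}_{(1,1)}\oplus V^{\Sp}_{\emptyset}$. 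This is the symplectic analogue of the familiar splitting of a curvature tensor into a Weyl part, a $2$-form part and a scalar; the last summand already accounts for one invariant, the curvature tensor $g\owedge g$ of the round model.

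The main step, and the main obstacle, is to restrict each of these three $\Sp(4m,\C)$-modules to $\GL(2m,\C)$ (sitting inside $\Sp(4m,\C)$ via $\C^{4m}=\C^{2m}\oplus(\C^{2m})^{*}$) and then to $\Sp(2m,\C)$, counting trivial summands. The awkward point is that the intermediate $\GL(2m,\C)$-modules are rational, mixed covariant/contravariant representations, so one must track Littlewood's $\Sp\downarrow\GL$ rule carefully and then use that the $\Sp(2m,\C)$-trivial occurs in a Schur module $\bbS_\nu\C^{2m}$ exactly once, precisely when every column of $\nu$ is even. Rather than grind through this bookkeeping, I would most cleanly carry out the count via the hyperkähler dual pair: write $\C^{4m}=E\otimes H$ with $E=\C^{2m}$ carrying the symplectic form and $H=\C^2$ the $\Sp(1)$-factor, so that $\Sp(2m,\C)=\Sp(E)$ and the three Kähler forms span $\Sym^2H$. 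The Cauchy-type decomposition gives $\bbS_{(2,2)}(E\otimes H)=\bigoplus_{\mu,\nu\vdash4} g\bigl((2,2),\mu,\nu\bigr)\,\bbS_\mu E\otimes\bbS_\nu H$, with $g$ the Kronecker coefficient for $\rmS_4$, and passing to $\Sp(E)$-invariants kills every factor except $\dim(\bbS_\mu E)^{\Sp(E)}=1$ for the even-column partitions $\mu=(2,2),(1^4)$.

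Finally, evaluating the $\rmS_4$ Kronecker coefficients from $\chi_{(2,2)}^{\otimes2}=\chi_{(4)}+\chi_{(2,2)}+\chi_{(1^4)}$ and $\chi_{(2,2)}\otimes\chi_{(1^4)}=\chi_{(2,2)}$, only $\nu\in\{(4),(2,2)\}$ survive among the at-most-two-row partitions making $\bbS_\nu H\neq0$, with combined multiplicities $1$ and $2$ respectively. Hence the invariant space has dimension $\dim\Sym^4\C^2 + 2\dim\bbS_{(2,2)}\C^2 = 5 + 2 = 7$, matching the geometric count of the six products $\omega_i\owedge\omega_j$ together with $g\owedge g$. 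The only point needing extra care is the low-rank range $m=2,3$, where $\dim E=2m$ is small; there I would verify directly that $(\Lambda^4E)^{\Sp(E)}$ and $(\bbS_{(2,2)}E)^{\Sp(E)}$ remain one-dimensional (for $m=2$, $\Lambda^4\C^4$ is already the trivial $\Sp(4,\C)$-module), so the value $7$ persists for all $m\ge2$.
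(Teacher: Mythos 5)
Your proof is correct, but it takes a genuinely different route from the paper's. The paper handles this lemma (in parallel with Lemma~\ref{lem:sasaki:um}) by iterating the branching rules of Howe--Tan--Willenbring~\cite{Ho} down the chain $\Sp(2m,\C)\subset\GL(2m,\C)\subset\Sp(4m,\C)\subset\GL(4m,\C)$, and in fact records no details beyond the assertion that the count is easily deduced from those rules. You instead bypass the chain entirely via the dual pair: writing $\C^{4m}=E\otimes H$ with $E=\C^{2m}$, $H=\C^2$, using $\bbS_{(2,2)}(E\otimes H)=\bigoplus_{\mu,\nu\vdash 4}g((2,2),\mu,\nu)\,\bbS_\mu E\otimes\bbS_\nu H$, the classical fact that $(\bbS_\mu E)^{\Sp(E)}$ is one-dimensional exactly when all columns of $\mu$ have even length, and two $\rmS_4$ character products. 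This is legitimate: restricting the paper's composite embedding to $\Sp(2m,\C)$ makes $\C^{4m}$ into $\C^{2m}\oplus(\C^{2m})^*\cong E\oplus E\cong E\otimes H$ with trivial action on $H$, so the two embeddings are conjugate in $\GL(4m,\C)$ and the invariant counts agree. Your arithmetic checks out: $\chi_{(2,2)}^{\otimes 2}=\chi_{(4)}+\chi_{(2,2)}+\chi_{(1^4)}$ and $\chi_{(2,2)}\otimes\chi_{(1^4)}=\chi_{(2,2)}$ give multiplicity $1$ for $\nu=(4)$ and $2$ for $\nu=(2,2)$ (with $\nu=(3,1)$ absent), whence $5+2\cdot 1=7$; and the even-column invariant criterion is not a stable-range statement, needing only $\ell(\mu)\le 2m$, i.e.\ $2m\ge 4$ for $\mu\in\{(2,2),(1^4)\}$, so your low-rank worry for $m=2,3$ resolves exactly as you indicate. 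What your route buys: it avoids the bookkeeping of rational mixed covariant/contravariant $\GL(2m,\C)$-modules forced by the paper's two-step branching, it is manifestly uniform in $m\ge 2$, and it exhibits the invariant space intrinsically as $\Sym^4H\oplus 2\,\bbS_{(2,2)}H\cong\Sym^2(\Sym^2H)\oplus(\Lambda^2H)^{\otimes 2}$, matching the geometric spanning set of the six tensors $\omega_i\owedge\omega_j$ together with $g\owedge g$. What the paper's route buys is uniformity of method with the K\"ahler case of Lemma~\ref{lem:sasaki:um}, where the same reference~\cite{Ho} does the work. One cosmetic remark: your opening computation $\bbS_{(2,2)}(\C^{4m})|_{\Sp(4m,\C)}=V^{\Sp}_{(2,2)}\oplus V^{\Sp}_{(1,1)}\oplus V^{\Sp}_{\emptyset}$ is correct but plays no role in your final count and could be omitted.
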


Moreover, the assumption $m>2$ in the second part of Lemma~\ref{lem:sasaki:um} is not really restrictive. Namely every 3-dimensional Einstein-Sasakian manifold is 3-Sasakian in accordance with $\SU(2) \simeq \Sp(1)$. Further, every 3-dimensional 3-Sasakian manifold is of constant sectional curvature one according to~\cite[Proposition~1.1.2~(iii)]{BG} and hence locally isometric to the 3-dimensional standard sphere. This finishes the proof of Theorem~\ref{th:main_2}.

\section{Proof of Corollary~\ref{co:Gallot}}
\label{se:Gallot}
Let us first recall Gallots original proof of Corollary~\ref{co:Gallot} (cf.~\cite[Corollary~3.3]{Ga}). For every function $f$ on $M$ we
define the function $F(p,r)\;:=\;r^2 f(p)$ and the symmetric two-tensor $q :=
\frac{1}{2}\hat\nabla^2 F$ on the cone $\hat M$. Then we can recover the original function via 
\begin{equation}\label{eq:kanonische_konstruktion_von_funktionen}
f(p)\;\;=\;\;q(\partial_r|_{(p,1)},\partial_r|_{(p,1)}).
\end{equation}
Further, we know that~\eqref{eq:Gallots_Gleichung} holds if and only if $\hat
\nabla q = 0$. As mentioned already before, he also showed that the cone over
$M$ is irreducible unless the universal covering of $M$ is a Euclidean sphere $\rmS^n$. If the cone is irreducible, then 
its holonomy group acts irreducible on the tangent spaces of $\hat M$. Therefore, $q$ is a constant multiple of the metric tensor of $\hat M$ according to Schurs Lemma, i.e. $f$ is constant.

To put Gallots result in order with the theory developed in this article, we consider the symmetric 2-tensor $\kappa$ defined by~\eqref{eq:Gallots_kappa}, 
the symmetrized algebraic curvature tensor $S := q \owedge \hat g$ on $\hat M$ and its pullback to $M$ 
\begin{equation}
C\;\;:=\;\;f\, \langle\ ,\ \rangle \owedge \langle\ ,\ \rangle + \frac{1}{2}
\nabla^2 f \owedge \langle\ ,\ \rangle.
\end{equation} 
If~\eqref{eq:Gallots_Gleichung} holds, then $q$ is $\hat\nabla$-parallel by Gallots original calculations and hence $S$ shares this property. 
Further, it is straightforward that $\kappa$ defined by~\eqref{eq:Gallots_kappa} is related to $S$ via~\eqref{eq:kanonische_konstruktion_von_killingtensoren} or that 
the pair $(\kappa,C)$ is a solution to~\eqref{eq:alg_curv_tensor_1}-\eqref{eq:alg_curv_tensor_2}. 

Either way, it follows from Theorem~\ref{th:main_2} that $M$ is a Sasakian
manifold or a sphere. To show that $M$ is a sphere, according to Corollary~\ref{co:main_2} it suffices to show that the trace of $\kappa$ is not constant unless $f$ is: we have
\begin{align*}
&\trace \, \kappa\;\;=\;\;n\, f - \frac{1}{4} \nabla^*\nabla f,\\
&\d\, \trace \, \kappa\;\;=\;\;n\, \d f  - \frac{1}{4} \d\, \nabla^*\nabla f.
\end{align*}
Evaluating~\eqref{eq:Gallots_Gleichung} we see that
\[
\d\, \trace \, \kappa\;\;=\;\;\frac{3\, n+1}{2} \d f.
\]
Hence, if $\d f\neq 0$, then we know from Corollary~\ref{co:main_2} that $M$ is a
sphere.\qed

\paragraph{}ACKNOWLEDGMENTS. We would like to thank Uwe Semmelmann for some helpful critic comments on the form of this article. We would also like to express our gratitude 
to Gregor Weingart for sharing his ideas about Killing tensors.
\ \ \

\bibliographystyle{amsplain}

\vspace{2cm}
\begin{center}

 \qquad
 \parbox{60mm}{Konstantin Heil\\
  Mathematisches Institut\\
  Universit{\"a}t zu Stuttgart\\
  Paffenwaldring 57\\
  D-70569 Stuttgart, Germany\\[1mm]
  \texttt{Konstantin.Heil@mathematik.uni-stuttgart.de}}

 \qquad
 \parbox{60mm}{Tillmann Jentsch\\
  Mathematisches Institut\\
  Universit{\"a}t zu Stuttgart\\
  Paffenwaldring 57\\
  D-70569 Stuttgart, Germany\\[1mm]
  \texttt{Tillmann.Jentsch@mathematik.uni-stuttgart.de}}
\end{center}

\end{document}